\newcommand{\mbf}[1]{\mathbf{#1}}
\newcommand{\mbb}[1]{\mathbb{#1}}
\newcommand{\mfk}[1]{\mathfrak{#1}}
\newcommand{\mcf}[1]{\mathcal{#1}}
\newcommand{\ub}[1]{\underline{\mathbf{#1}}}
\newcommand{\elem}{\varepsilon}
\newcommand{\mbfs}[1]{\boldsymbol{#1}}
\newenvironment{cpf}{\begin{trivlist} \item[] {\em Proof of Claim.}}{\hspace*{\stretch{1}} $\diamond$ \end{trivlist}}
\DeclareMathOperator{\supp}{supp}
\DeclareMathOperator{\cl}{cl}
\newcommand{\rank}{\operatorname{rank}}
\newcommand{\si}{\operatorname{si}}
\newtheorem{theorem}{Theorem}[section]
\newtheorem{claim}{Claim}[theorem]
\newtheorem{lemma}[theorem]{Lemma}
\newtheorem{problem}[theorem]{Problem}
\newtheorem{proposition}[theorem]{Proposition}
\newtheorem{definition}[theorem]{Definition}
\title{The column number for 3-modular matrices}
\author{Joseph Paat$^1$ \and Zach Walsh$^2$ \and Luze Xu$^3$}
\date{\footnotesize
    $^1$Sauder School of Business, University of British Columbia, Canada, joseph.paat@sauder.ubc.ca\\%
    $^2$Department of Mathematics and Statistics, Auburn University, USA, zww0009@auburn.edu\\%
    $^3$Department of Industrial and Systems Engineering \& Wisconsin Institute for Discovery, University of Wisconsin-Madison, USA, lxu373@wisc.edu\\[2ex]%
}
\begin{document}

\maketitle
%%%%%%%%%%%%%%%%%%%%%%%%%%%%%%%%%%%%%%%%%%%%%
%%%%%%%%%%%%%%%%%%%%%%%%%%%%%%%%%%%%%%%%%%%%%
\begin{abstract}
%%%%%%%%%%%%%%%%%%%%%%%%%%%%%%%%%%%%%%%%%%%%%
%%%%%%%%%%%%%%%%%%%%%%%%%%%%%%%%%%%%%%%%%%%%%
%
An integer-valued matrix $\mbf{A}$ is $\Delta$-modular if each $\rank(\mbf{A}) \times \rank(\mbf{A})$ submatrix has determinant at most $\Delta$ in absolute value.
The column number problem is to determine the maximum number of pairwise non-parallel columns of a rank-$r$, $\Delta$-modular matrix.
Exact values for the column number are only known for $r \le 2$ or $\Delta \le 2$.
We prove that if $r$ is sufficiently large, then the maximum number of pairwise non-parallel columns of a rank-$r$, $3$-modular matrix is $\binom{r+1}{2} + 2(r-1)$.
This settles a conjecture by Lee, Paat, Stallknecht, and Xu on the column number in the case $\Delta = 3$.
We complement this main result by showing that there are at least three $3$-modular matrices with pairwise non-isomorphic vector matroids that attain this upper bound.
More generally, we show that if $r > \Delta$, then the number of $\Delta$-modular matrices with $\binom{r+1}{2} + (\Delta-1)(r-1)$ pairwise non-parallel columns and pairwise non-isomorphic vector matroids is at least exponential in $\sqrt{\Delta}$; previously only one matrix was known due to Lee et al.
\end{abstract}

% %% Keywords
% \begin{keyword}
% %% keywords here, in the form: keyword \sep keyword
% matroids \sep column number \sep integer programming

% %% PACS codes here, in the form: \PACS code \sep code

% %% MSC codes here, in the form: \MSC code \sep code
% %% or \MSC[2008] code \sep code (2000 is the default)
% \MSC 05B35 \sep 90C10

% \end{keyword}

%%%%%%%%%%%%%%%%%%%%%%%%%%%%%%%%%%%%%%%%%%%%%
%%%%%%%%%%%%%%%%%%%%%%%%%%%%%%%%%%%%%%%%%%%%%
\section{Introduction}
%%%%%%%%%%%%%%%%%%%%%%%%%%%%%%%%%%%%%%%%%%%%%
%%%%%%%%%%%%%%%%%%%%%%%%%%%%%%%%%%%%%%%%%%%%%
 
%
A matrix $\mbf{A} \in \mathbb Z^{r \times n}$ is \textbf{$\Delta$-modular} for $\Delta \in \mbb{Z}_{\ge 1}$ if the determinant $\det(\mbf{B})$ of each $\rank(\mbf{A}) \times \rank(\mbf{A})$ submatrix $\mbf{B}$ of $\mbf{A}$ has absolute value at most $\Delta$.
Our study of $\Delta$-modular matrices is motivated by integer programming, where the fundamental question is to decide if $\{\mbf{x} \in \mbb{Z}_{\ge 0}^n \colon \mbf{A}\mbf{x} = \mbf{b}\}$ is non-empty for $\mbf{A} \in \mathbb Z^{r \times n}$ and $\mbf{b} \in \mathbb Z^r$.
The integer programming problem is $\mcf{NP}$-hard in general.
However, a classical result is that the problem can be solved in polynomial time as a linear program if $\mbf{A}$ is {\bf totally unimodular (TU)}, that is, all subdeterminants of $\mbf{A}$ are in $\{-1,0,1\}$; see, e.g., \cite{HK1956}.
Moreover, $1$-modular (or {\bf unimodular}) matrices can be transformed into TU matrices while preserving integrality on $\mbf{x}$; consequently, the integer programming problem can be solved in polynomial time as a linear program if $\mbf{A}$ is unimodular.
Artmann, Weismantel, and Zenklusen \cite{AWZ2017} show that the integer programming problem can also be solved in polynomial time if $\mbf{A}$ is $2$-modular.
This leads to the following open question: 
for fixed $\Delta \ge 3$, can the integer programming problem be solved in polynomial time if $\mbf{A}$ is $\Delta$-modular?

With integer programming as motivation, we study structural properties of $\Delta$-modular matrices such as the following \textbf{column number problem}.
We say that two columns are {\bf parallel} if they are linearly dependent.

\begin{problem} \label{prob: column number problem}
For $\Delta, r \in \mbb{Z}_{\ge 1}$, determine the {\bf column number} $s(\Delta, r)$, where 
\[
s(\Delta, r) := \max \left\{n: \begin{array}{l}
\mbf{A} \in \mbb{Z}^{r \times n} \text{ is rank-$r$ and $\Delta$-modular,}\\
\text{with pairwise non-parallel columns}
\end{array}\right\}.
\]
\end{problem}

Upper bounds on $s(\Delta,r)$ and its variants (e.g., only requiring pairwise distinct columns) have been used to analyze integer programs.
For instance, one can reformulate an integer program as a mixed integer program with fewer integer variables depending on the number of different columns, see, e.g., \cite{PSW2020}; after this, one can use proximity results for mixed integer programs to derive stronger integer programming proximity results, see \cite{PWW2020}.
We point to \cite{LPSX2021} for more information on proximity and the column number.
Furthermore, we point to \cite{LPSX2021} and \cite[\S 3.7]{DHK2012} for discussion on how to use a count on different columns to bound so-called Graver basis elements.
Other column bounds have been used to efficiently optimize integer programs, e.g., \cite{ARTMANN2016635} and \cite{JB2022} use column bounds on $\Delta$-modular matrices with no zero-valued subdeterminants.
In this paper, we focus on the column number problem itself.

A classic result of Heller \cite{H1957} states that $s(1,r) = \binom{r+1}{2}$.
Oxley and Walsh \cite{OW2021} show $s(2,r) = \binom{r+1}{2} + r - 1$ when $r$ is sufficiently large.
Lee, Paat, Stallknecht, and Xu \cite{LPSX2021} independently prove $s(2,r) \le \binom{r+1}{2} + r $ for all $r$, and for $r \ge 6$, they improve to $s(2,r) = \binom{r+1}{2} + r - 1$.
Our main result is the next step in this sequence: we determine $s(3,r)$ when $r$ is sufficiently large.

\begin{theorem} \label{thm: main result}
If $r \in \mbb{Z}_{\ge 1}$ is sufficiently large, then $s(3,r) = \binom{r+1}{2} + 2(r-1)$.
\end{theorem}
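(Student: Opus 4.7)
The plan is to prove Theorem~\ref{thm: main result} by establishing matching lower and upper bounds on $s(3,r)$.

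For the lower bound, I would exhibit an explicit rank-$r$, $3$-modular matrix with $\binom{r+1}{2} + 2(r-1)$ pairwise non-parallel columns. A natural starting point is Heller's extremal unimodular matrix, whose columns realize the edges of $K_{r+1}$ and account for the $\binom{r+1}{2}$ term. To this matrix, one appends $2(r-1)$ carefully chosen columns with entries in $\{0, \pm 1, \pm 2\}$ placed so that every rank-$r$ subdeterminant remains bounded by $3$ in absolute value. The paper's complementary result, which gives three non-isomorphic such extremal vector matroids, indicates that this construction is not unique; I would present the simplest of the three to keep the lower-bound argument short.

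The upper bound is the technical heart. Let $\mbf{A}$ realize $s(3,r)$ and set $M := M(\mbf{A})$. I would proceed matroid-theoretically in three stages. First, use $3$-modularity to pin down the local structure of $M$: each rank-$2$ flat (line) carries at most $5$ pairwise non-parallel columns, and more generally every small flat has a sharply bounded size. Second, reduce to the case where $M$ is $3$-connected, absorbing $1$- and $2$-separations via induction on $r$ so that the small-rank summands contribute at most what the formula allows. Third, within the $3$-connected case, locate a spanning regular submatroid $N \subseteq M$ of rank $r$; Heller's theorem then gives $|E(N)| \le \binom{r+1}{2}$, and it remains to control $|E(M) \setminus E(N)| \le 2(r-1)$.

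The main obstacle is this final counting step. Each ``extra'' column outside $N$, once expressed in a basis drawn from $N$, has a coordinate vector whose entries are sharply restricted by the requirement that all maximal subdeterminants involving that column remain at most $3$ in absolute value. For $\Delta = 2$, Oxley--Walsh showed that at most $r-1$ such extras can coexist, each corresponding to a fundamental-cycle sign pattern in the regular part. For $\Delta = 3$, the pool of permissible coordinate vectors is roughly twice as large and the interactions are richer; one must rule out configurations where three or more extras coexist along a single line or plane and force a forbidden $4 \times 4$ subdeterminant. I anticipate this requires a careful case analysis of rank-$3$ and rank-$4$ obstructions, combined with minor-based reductions that invoke the inductive hypothesis on $s(3,r-1)$ and the already-known values of $s(3,r)$ for small $r$. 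The proof concludes by matching each extra to one of $2(r-1)$ combinatorial invariants of $N$ (analogous to Oxley--Walsh's fundamental-cycle correspondence), yielding the sharp upper bound.
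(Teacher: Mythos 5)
Your lower-bound sketch is fine and matches the paper's Section~4, which exhibits the explicit matrices $\mbf{A}(3,r)$, $\mbf{A}(3,2,r)$, and $\mbf{A}(3,1{+}1,r)$. The upper bound, however, has two genuine gaps.

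First, ``reduce to the 3-connected case via induction'' is not what is needed and almost certainly cannot be made to work. The paper does not induct on low-order separations at all. Instead it imports \Cref{reduction} (Geelen--Nelson--Walsh), which on a hypothetical counterexample produces a minor $N$ that either (a) has a spanning clique restriction, or (b) is vertically $s$-connected for a huge constant $s$ and contains a large independent set of highly ``critical'' elements. Outcome (b) is then killed by combining \Cref{clique minor} (density forces a large $M(K_n)$-minor) with \Cref{connectivity} (high vertical connectivity lets one contract a bounded-rank set, namely the critical elements and their long lines, into the span of a clique minor), which contradicts \Cref{prop: the spanning clique case}. Three-connectedness by itself gives you none of this; in particular it does not give a spanning clique.

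Second, and more fundamentally, ``locate a spanning regular submatroid $N$ and invoke Heller'' is a gap. A 3-connected 3-modular matroid need not contain a spanning regular restriction of size anywhere near $\binom{r+1}{2}$, and when it does not, bounding the extras by $2(r-1)$ does not give the theorem. What the paper actually needs, and what all of the machinery above is there to deliver, is a spanning \emph{clique} restriction $M(K_{r+1})$, which has exactly $\binom{r+1}{2}$ elements; this is what makes the extras-counting in \Cref{prop: the spanning clique case} both necessary and sufficient. With only a ``regular submatroid'' in hand you have no control over how the extras sit relative to it.

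Your third stage, classifying the coordinate vectors of the extras relative to the spanning structure and ruling out coexisting configurations by forbidding subdeterminants of absolute value $\ge 4$, is the right spirit, and is essentially what \Cref{prop: standardized representation}, \Cref{lem: extra column determinant}, and \Cref{lem: characterization of 2-element independent flats} carry out (via a normalized representation where the clique becomes $[\mbf I_r\ \mbf D_r]$ and extras become short integer vectors with a prescribed support pattern). But note this analysis also needs \Cref{lem: bounding k-local-critical points} to control critical points, which is then fed back into the connectivity argument; it is not a self-contained ``final counting step'' detached from the reduction. One small correction: in $\mcf M_3$ a long line has at most $6$ points (because $U_{2,7}\notin\mcf M_3$), not $5$.
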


\noindent Various $3$-modular matrices attain the $s(3,r)$ bound; see \Cref{figure: sharp matrices}.

\Cref{thm: main result} settles the following conjecture by Lee et al.\ \cite{LPSX2021} for $\Delta = 3$ and large $r$:
if a rank-$r$, $\Delta$-modular matrix has no columns equal up to multiplication by $\pm 1$, then it has at most $\binom{r+1}{2} + (\Delta-1)r$ columns.
Lee et al.\ allow for parallel columns $\mbf{u}$ and $\alpha \mbf{u}$, provided $\alpha \notin \{0, \pm 1\}$, but when $\Delta = 3$ it is straightforward to show that Theorem \ref{thm: main result} and their conjecture are equivalent.
Averkov and Schymura \cite{AS2022} disprove Lee et al.'s conjecture for $\Delta \in \{4,8,16\}$, but it remains open for other values of $\Delta$, e.g., prime numbers.

While an exact solution to \Cref{prob: column number problem} for fixed $\Delta$ is only known for $\Delta \le 3$, there has been significant progress on upper bounds for general $\Delta$ and $r$.
There are multiple incomparable column number bounds for $\Delta$-modular matrices with $\Delta \ge 4$.
Some of these bounds count $s(\Delta, r)$ directly, while others allow for additional columns, e.g., distinct columns.
Moreover, there are different notations used, e.g., $\mfk{c}^p(\Delta,r)$ is sometimes used to count primitive columns; see \cite{AS2022,LPSX2021}.
For the sake of presentation, we limit our discussion to implications of past work on $s(\Delta,r)$.
Lee et al.\ \cite{LPSX2021} prove that $s(\Delta, r) \le \Delta^2 \binom{r+1}{2}$ for all values of $r$ and $\Delta$, which improves on past work of Lee \cite{Lee1989}, Kung \cite{KunMat1990, K1990}, and Glanzer, Weismantel, and Zenklusen \cite{GWZ2018}.
Averkov and Schymura prove that $s(\Delta, r) \in O(r^4) \Delta$ for all values of $r$ and $\Delta$ \cite[Theorem 1.2]{AS2022}.
If we fix $r$ and let $\Delta$ tend to infinity, then Averkov's and Schymura's work provides the best-known upper bound on $s(\Delta, r)$.
Kriepke and Schymura \cite[Theorem 1.1]{KS2025} give an exact value of $s(\Delta,2)$ for large enough $\Delta$.
If we fix $\Delta$ and let $r$ tend to infinity, then the best-known upper bound is $s(\Delta, r) \le \binom{r+1}{2} + 80\Delta^7 r$ due to Stallknecht and the present authors \cite[Theorem 1.3]{Paat-Stallknecht-Walsh-Xu-2024}.
For general $\Delta$ and $r$, the best-known lower bound is $\binom{r+1}{2} + (\Delta - 1)(r - 1)$; we discuss this bound in \Cref{sec: the maximum-sized matrices}, but it can also be found in Lee et al.\ \cite[Proposition 1]{LPSX2021}.
According to \Cref{thm: main result} and past work in \cite{H1957,LPSX2021,OW2021}, this lower bound is sharp for $\Delta \in \{1, 2, 3\}$.
However, Averkov and Schymura \cite{AS2022} show that it is not sharp for $\Delta \in \{4, 8, 16\}$.
Thus, it is unclear to us what the solution to \Cref{prob: column number problem} might be for $\Delta \ge 4$.

We now discuss potential hurdles when identifying $s(\Delta,r)$ for $\Delta \ge 4$.
One hurdle is that the presence of sporadic low-rank matrices lead to different values of $s(\Delta,r)$ than seen for larger $r$.
For $\Delta = 2$ and $r \in \{3,5\}$, Lee et al.\ \cite{LPSX2021} show that $s(2,r) = \binom{r+1}{2} + r$ rather than $\binom{r+1}{2} + r-1$.
For $\Delta = 3$, the bound of \Cref{thm: main result} does not hold for the following rank-$3$, $3$-modular matrix:
\[
\left[\begin{array}{rrrrrrrrrrr}
1 & 0 & 0 & 1 & 1 & 0 & 0 & 0 & 1 & 1 & 1 \\
0 & 1 & 0 & -1 & 0 & 1 & 1 & 2 & 1 & 2 & 1 \\
0 & 0 & 1 & 0 & -1 & -1 & -2 & -3 & -2 & -3 & -3
\end{array}\right].
\]
Kriepke and Schymura show that a similar chaotic behavior occurs for $s(\Delta,2)$ when $\Delta$ is small \cite[Theorem 1.1]{KS2025}.
In light of these low-rank sporadic matrices, computing assistance may be helpful (or possibly, required) when computing $s(\Delta, r)$ for $\Delta \ge 4$.
We expect that the number of these sporadic matrices will increase with $\Delta$, and this is one reason we are satisfied with an answer to \Cref{prob: column number problem} only for sufficiently large $r$.

Another hurdle when identifying $s(\Delta,r)$ for $\Delta \ge 4$ is that there is an exponential increase in the number of rank-$r$, $\Delta$-modular matrices with $\binom{r+1}{2} + (\Delta - 1)(r - 1)$ pairwise non-parallel columns as $\Delta$ increases.
We will use matroids to make this statement precise.
We write $\mcf{M}_{\Delta}$ for the class of matroids with a representation over $\mbb{R}$ as a $\Delta$-modular matrix.
We say a matroid in $\mcf{M}_{\Delta}$ is {\bf $\Delta$-modular}.
The class $\mcf{M}_{\Delta}$ was introduced by Geelen et al. \cite{GNW2024} and then studied by Oxley and Walsh \cite{OW2021} in the case $\Delta = 2$ and by Stallknecht and the present authors \cite{Paat-Stallknecht-Walsh-Xu-2024} for general $\Delta$.
The following result, which we prove in \Cref{sec: the maximum-sized matrices}, implies that if $\binom{r+1}{2} + (\Delta - 1)(r - 1)$ is the answer to \Cref{prob: column number problem} for $r > \Delta \ge 2$, then a proof must account for exponentially many sharp examples.
For $\Delta\in \mbb{Z}_{\ge 1}$, let $N_{\Delta}$ denote the number of integer partitions of $\Delta - 1$.
The number $N_{\Delta}$ grows exponentially with $\sqrt{\Delta}$; see \cite{Hardy-Ramanujan2000}.

\begin{theorem} \label{thm: exponentially many extremal matroids}
Let $\Delta,r \in \mbb{Z}_{\ge 2}$ with $r \ge \Delta+1$.
There are at least $N_{\Delta} + 1$ pairwise non-isomorphic simple rank-$r$, $\Delta$-modular matroids with $\binom{r+1}{2} + (\Delta - 1)(r - 1)$ elements.
\end{theorem}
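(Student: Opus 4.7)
The plan is to prove \Cref{thm: exponentially many extremal matroids} by exhibiting an explicit family of $\Delta$-modular matrices indexed by the partitions of $\Delta-1$, together with one additional matrix, and then showing that the $N_\Delta + 1$ resulting simple matroids are pairwise non-isomorphic. The common ``skeleton'' of every matrix in the family will be $[\mbf{I}_r \mid \mbf{D}_r] \in \mbb{Z}^{r \times \binom{r+1}{2}}$, where $\mbf{D}_r$ is the matrix whose columns are $\mbf{e}_i - \mbf{e}_j$ for $1 \le i < j \le r$. This is unimodular and represents $M(K_{r+1})$, contributing the first $\binom{r+1}{2}$ pairwise non-parallel columns. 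The role of the hypothesis $r \ge \Delta + 1$ is to guarantee that there are enough ``free'' indices $j$ available so that all partition-encoded data can be packed into the extra columns without forcing a large $r \times r$ minor.

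For each partition $\lambda = (\lambda_1, \ldots, \lambda_k)$ of $\Delta - 1$ and each $j \in \{2, \ldots, r\}$, I would append $\Delta - 1$ new columns supported on rows $1$ and $j$, i.e.\ columns of the form $\mu \mbf{e}_1 + \nu \mbf{e}_j$ with entry pairs $(\mu,\nu)$ drawn from a carefully chosen $\lambda$-dependent set. The pairs are chosen so that (i) no two of the $\Delta - 1$ new columns on a given $j$ are parallel and none is parallel to a skeleton column, giving exactly $\binom{r+1}{2} + (\Delta-1)(r-1)$ pairwise non-parallel columns, and (ii) together with the three skeleton columns $\mbf{e}_1, \mbf{e}_j, \mbf{e}_1 - \mbf{e}_j$ on the same rank-$2$ flat, they form an extremal $U_{2,\Delta+2}$-line. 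Verifying $\Delta$-modularity then reduces to a routine check: any $r \times r$ submatrix uses at most a bounded number of extra columns, and because these share rows, their contribution can be expanded along two rows using Laplace expansion, yielding a determinant of absolute value at most $\Delta$ against the unimodular background. The $(N_\Delta + 1)$-st matrix is Lee et al.'s original construction from \cite[Proposition~1]{LPSX2021}, which I would take as structurally ``off pattern'' relative to the $\mbf{A}_\lambda$ above.

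The main obstacle is showing non-isomorphism of the resulting simple matroids $M(\mbf{A}_\lambda)$. All of them share many coarse invariants, so I need a matroid-theoretic statistic that recovers the partition $\lambda$ from $M(\mbf{A}_\lambda)$ alone. The plan is to identify a canonical rank-$2$ or rank-$3$ minor whose simplification exposes $\lambda$: specifically, after contracting a suitable element (such as $\mbf{e}_1$) and restricting to the images of the $(\Delta-1)(r-1)$ extra columns, the projection collapses columns according to the equivalence classes encoded by $\lambda$, so the multiset of parallel-class sizes in the simplification equals $\{\lambda_1 + 1, \ldots, \lambda_k + 1\}$ (or a simple rescaling thereof). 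Because the multiset of parts determines an integer partition, distinct $\lambda$ yield non-isomorphic matroids, and by the same invariant Lee et al.'s matroid is distinguished from every $M(\mbf{A}_\lambda)$. To make this step rigorous I would need to verify that the invariant is genuinely matroidal (isomorphism-invariant) and not an artifact of the representation, which is the most delicate piece of bookkeeping in the proof.
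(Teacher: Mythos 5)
Your high-level plan matches the paper's: build one $\Delta$-modular matrix per partition of $\Delta-1$, add Lee et al.'s matrix as the extra member, and separate the resulting matroids via a line-length invariant localized at a canonically identified element. The invariant idea is also essentially the paper's (the paper uses the multiset of long-line lengths through the unique element $e$ with $M/e$ binary, which is equivalent to your "parallel-class sizes after contracting $\mbf{e}_1$" up to shifting each size by one).

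However, your construction is not correct, and this is a genuine gap rather than a detail. You propose that \emph{all} extra columns be supported on exactly two rows $\{1,j\}$, with $\Delta-1$ of them per $j$, so that each of the $r-1$ rank-$2$ flats through $\mbf{e}_1$ and $\mbf{e}_j$ becomes a $U_{2,\Delta+2}$-line. But then every long line of $M[\mbf{A}_\lambda]$ through $x_1$ has the same length $\Delta+2$, independently of $\lambda$, and exactly the invariant you intend to use is constant across the family. In particular, after contracting $x_1$, every relevant parallel class has the same size, so the multiset of class sizes carries no information about $\lambda$. Varying the entry pairs $(\mu,\nu)$ within a fixed rank-$2$ flat can change higher-rank dependencies, but it cannot change the number of points on the flat, which is what your proposed invariant measures. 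And you cannot instead put $\lambda_i$ extra columns on the $i$-th line while keeping all supports of size two: the total number of extras would then be only $\Delta-1$, far short of $(\Delta-1)(r-1)$.

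The missing idea is that most of the extra columns must have \emph{three}-element support. The paper's matrix $\mbf{A}(\Delta,\boldsymbol{\lambda},r)$ uses $\lambda_i$ columns $k\mbf{e}_1 + \mbf{e}_{i+1}$ (for $i \in [m]$, $k \in [\lambda_i]$) to create $m$ long lines through $x_1$ whose lengths $3 + \lambda_i$ record the parts of $\boldsymbol{\lambda}$, and then uses columns $k\mbf{e}_1 + \mbf{e}_{i+1} - \mbf{e}_j$ supported on rows $\{1, i+1, j\}$ to supply the remaining $(\Delta-1)(r-2)$ columns without overloading any rank-$2$ flat. It is precisely because these latter columns spread across three rows that the partition is visible in the line-length statistics, and because they spread the budget across many lines that $\Delta$-modularity holds (proved in the paper by a careful expansion over a circuit of the associated digraph, not by a simple Laplace expansion along two rows). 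Your instinct about needing to certify the invariant as matroidal is correct; the paper does this by showing $x_1$ is the unique element whose contraction is binary, which gives a canonical basepoint for the line-length multiset.
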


We conclude this introduction with a proof outline of \Cref{thm: main result}.
Our proof uses techniques from extremal matroid theory that are developed in \cite{GN} and \cite{GNW2024}, and specialized to $\mcf{M}_{\Delta}$ in \cite{Paat-Stallknecht-Walsh-Xu-2024}.
The class $\mcf{M}_{\Delta}$ has several nice properties: it is closed under taking minors and duals, it does not contain the rank-$2$ uniform matroid $U_{2, 2\Delta + 2}$, and matroids in $\mcf{M}_{\Delta}$ are representable over all fields with characteristic greater than $\Delta$; see \cite{GNW2024, OW2021}.
Our proofs will make use of technical properties from \cite{Paat-Stallknecht-Walsh-Xu-2024}, and we state these as needed.
We will further specialize these techniques to $\mcf{M}_3$.
There are three main steps in the proof:
\begin{enumerate}[leftmargin = *,  noitemsep]
    \item \Cref{reduction} allows us to reduce to the study of matroids with a clique (the cycle matroid of a complete graph) as a spanning restriction.

    \item \Cref{prop: standardized representation} and \Cref{lem: extra column determinant} place structure on the $\Delta$-modular representation, which reduces the problem to a case analysis of low-rank matrices.

    \item We perform the case analysis of low-rank matrices in \Cref{sec: spanning clique}, with the bulk of the work taking place in \Cref{lem: characterization of 2-element independent flats}.
\end{enumerate}

We highlight that steps 1 and 2 apply for any value of $\Delta$.
Thus, even in light of the difficulties discussed previously for \Cref{prob: column number problem} when $\Delta \ge 4$, we believe that a sharp bound can be found for $\Delta \in \{4,5\}$ using our techniques, with the help of computing in step 3.

%%%%%%%%%%%%%%%%%%%%%%%%%%%%%%%%%%%%%%%%%%%%%
%%%%%%%%%%%%%%%%%%%%%%%%%%%%%%%%%%%%%%%%%%%%%
\bigskip
\noindent{\bf Notation and definitions.}
%%%%%%%%%%%%%%%%%%%%%%%%%%%%%%%%%%%%%%%%%%%%%
%%%%%%%%%%%%%%%%%%%%%%%%%%%%%%%%%%%%%%%%%%%%%
%
For $d \in \mbb{Z}_{\ge 1}$, we write $[d] := \{1, \dotsc, d\}$.
We use lower case bold font for vectors, e.g., $\mbf{x}\in \mbb{R}^r$, and upper case bold font for matrices, e.g., $\mbf{A} \in \mbb{Z}^{r\times n}$.
For a vector $\mbf{x} \in \mbb{F}^r$ over a field $\mbb{F}$, we let $x_i$ denote the $i$-th component of $\mbf{x}$.
The \textbf{support} of $\mbf{x}$ is the set $\supp(\mbf{x}) = \{i \in [r] \colon x_i \ne 0\}$.
For $R \subseteq [r]$, we write $\mbf{x}[R]$ for the vector in $\mathbb F^{|R|}$ formed by the components of $\mbf{x}$ with index in $R$. 
We write $\underline{\mbf{x}}$ for $\mbf{x}[\supp(\mbf{x})]$.
For sets $X, Y \subseteq \mbb{R}^d$, we write $X - Y$ to denote the set difference.

For a matrix $\mbf{A} \in \mathbb F^{r \times n}$ and sets $R \subseteq [r]$ and $E \subseteq [n]$, we write $\mbf{A}[R, E]$ for the submatrix of $\mbf{A}$ formed by rows indexed by $R$ and columns indexed by $E$.
We set $\mbf{A}[E] := \mbf{A}[[r], E]$ and $\mbf{A}[i,j] := \mbf{A}[\{i\}, \{j\}]$.
If $\mbf{A} \in \mathbb F^{r \times n}$ has rank $r$ and there is a set $B \subseteq [n]$ such that $\mbf{A}[B]$ is upper-triangular and non-singular, then the rows of $\mbf{A}$ are naturally indexed by $B$.
In this case, if $B' \subseteq B$ and $E \subseteq [n]$, then we write $\mbf{A}[B', E]$ for the submatrix of $\mbf{A}$ formed by the rows with indices in $B'$ and the columns with indices in $E$.
We use $\mbf{e}_1, \dotsc, \mbf{e}_d \in \mbb{R}^d$ to denote the standard unit vectors in $\mbb{R}^d$.
We use $\mbf{1}_{d}$ and $\mbf{0}_{d}$ to denote the $d$-dimensional vector of all ones and all zeroes, respectively, and $\mbf{I}_d$ to denote the $d\times d$ identity matrix; we omit $d$ if the dimension is clear from context.

Our proof of \Cref{thm: main result} will rely on matroids, and we assume some basic knowledge on the subject.
For more details about how matroids relate to \Cref{thm: main result}, we direct the reader to \cite{Paat-Stallknecht-Walsh-Xu-2024}.
We follow terminology from \cite{Oxley}, but briefly define some terms that either play a vital role or are non-standard.
We follow the convention of \cite{Oxley} and omit parentheses for singleton sets, e.g. we write $X \cup e$ for the set $X \cup \{e\}$.
A {\bf point} of a matroid $M$ is a maximal rank-$1$ subset of elements. 
We write $|M|$ and $\elem(M)$ for the number of elements and the number of points in $M$, respectively.
The {\bf closure} in $M$ of a set $X$, denoted $\cl(X)$, is $\{e \in E(M) \colon r(X \cup e) = r(X)\}$.
A {\bf line} of a matroid $M$ is a maximal rank-$2$ subset of elements, and a {\bf long line} is a line that contains at least three points.
For a set $X \subseteq E(M)$, we write $M|X$, $M\backslash X$, and $M/X$ for the restriction of $M$ to $X$, the deletion of $X$ from $M$, and the contraction of $X$ from $M$, respectively.
We say that $M|X$ is {\bf spanning} if it has the same rank as $M$.
The graphic matroid $M(K_r)$ of the complete graph on $r$ vertices is a {\bf clique}.
The canonical representation of $M(K_r)$ over any field is the matrix $\mbf{D}_r$ whose columns are $\mbf{e}_i - \mbf{e}_j$ for all $1 \le i < j \le r$.
Note that the rows of $\mbf{D}_r$ sum to the zero vector, so we can delete the last row to obtain the representation $[\mbf{I}_{r-1} \, \mbf{D}_{r-1}]$ of $M(K_r)$. 
A {\bf frame} for $M(K_r)$ is a basis $B$ for which each element of $M(K_r)$ is spanned by a subset of $B$ of size at most two.
In the representation $[\mbf{I}_{r-1} \, \mbf{D}_{r-1}]$ of $M(K_{r})$, the columns indexing the identity submatrix form a frame.
For a matrix $\mbf{A}$, we write $M[\mbf{A}]$ for the (column) vector matroid of $\mbf{A}$.

%%%%%%%%%%%%%%%%%%%%%%%%%%%%%%%%%%%%%%%%%%%%%
%%%%%%%%%%%%%%%%%%%%%%%%%%%%%%%%%%%%%%%%%%%%%
\section{The spanning clique case of Theorem \ref{thm: main result}} \label{sec: spanning clique}
%%%%%%%%%%%%%%%%%%%%%%%%%%%%%%%%%%%%%%%%%%%%%
%%%%%%%%%%%%%%%%%%%%%%%%%%%%%%%%%%%%%%%%%%%%%

In this section, we prove \Cref{thm: main result} in the special case that the vector matroid of the $3$-modular matrix has a spanning clique restriction.
We organize our discussion into three subsections.
\Cref{subsec:SCC1} contains preliminary results that apply to $\mathcal M_{\Delta}$ for all $\Delta$ and may be useful for future work.
\Cref{subsec:SCC2} proves structural properties of $3$-element extensions of a clique in the context of $3$-modular matroids.
\Cref{subsec:SCC3} proves the special case of \Cref{thm: main result}.
%
%%%%%%%%%%%%%%%%%%%%%%%%%%%%%%%%%%%%%%%%%%%%%
%%%%%%%%%%%%%%%%%%%%%%%%%%%%%%%%%%%%%%%%%%%%%
\subsection{Preliminary results on matroids with a spanning clique}\label{subsec:SCC1}
%%%%%%%%%%%%%%%%%%%%%%%%%%%%%%%%%%%%%%%%%%%%%
%%%%%%%%%%%%%%%%%%%%%%%%%%%%%%%%%%%%%%%%%%%%%

We need one property on the representability of cliques.
The following lemma is implied by \cite[Proposition 6.6.5]{Oxley} in the special case that the matroid is a clique.

\begin{lemma} \label{lem: cliques are uniquely representable}
Let $r \in \mbb{Z}_{\ge2}$ and let $\mbf{A} \in \mathbb F^{r \times \binom{r+1}{2}}$ be a matrix that represents the matroid $M(K_{r+1})$ over a field $\mathbb F$.
If the identity map on $[\binom{r+1}{2}]$ is an isomorphism from $M[\mbf{A}]$ to $M[[\mbf{I}_r \, \mbf{D}_r]]$, then $[\mbf{I}_r \, \mbf{D}_r]$ can be obtained from $\mbf{A}$ by elementary row operations and column scaling.
\end{lemma}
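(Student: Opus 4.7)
The plan is to transform $\mbf{A}$ into $[\mbf{I}_r \, \mbf{D}_r]$ in stages: first reduce the basis block to the identity via a single left-multiplication, and then use row and column scalings to match $\mbf{D}_r$. Because the identity map is an isomorphism from $M[\mbf{A}]$ to $M[[\mbf{I}_r \, \mbf{D}_r]]$, and because the first $r$ columns of $[\mbf{I}_r \, \mbf{D}_r]$ form a basis of that matroid, the submatrix $\mbf{A}[[r]]$ is nonsingular. Left-multiplying $\mbf{A}$ by $\mbf{A}[[r]]^{-1}$ produces $[\mbf{I}_r \, \mbf{A}']$ for some $\mbf{A}' \in \mathbb F^{r \times \binom{r}{2}}$, and it remains to argue that $\mbf{A}'$ agrees with $\mbf{D}_r$ up to row and column scalings.

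Next, I would pin down the sparsity pattern of each column of $\mbf{A}'$. Label these columns by pairs $\{i,j\}$ with $1 \le i < j \le r$. By the identity isomorphism, the columns of $[\mbf{I}_r \, \mbf{A}']$ indexed by $i$, $j$, and $\{i,j\}$ correspond to the triangle on vertices $\{i, j, r+1\}$ in $M(K_{r+1})$, so they form a circuit. Since the first two of these columns are $\mbf{e}_i$ and $\mbf{e}_j$, the column $\mbf{a}_{ij}$ for edge $\{i, j\}$ must equal $\alpha_{ij}\mbf{e}_i + \beta_{ij}\mbf{e}_j$ for some scalars $\alpha_{ij}, \beta_{ij}$, both nonzero since any two elements of a circuit are independent. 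I would then seek nonzero row scalars $\gamma_1, \dots, \gamma_r$ and column scalars on each $\{i,j\}$-column so that every $\alpha_{ij}\mbf{e}_i + \beta_{ij}\mbf{e}_j$ is sent to $\mbf{e}_i - \mbf{e}_j$. Matching coefficients reduces this requirement to the single system $\gamma_j/\gamma_i = -\alpha_{ij}/\beta_{ij}$ for every $1 \le i < j \le r$; the identity block is then restored by rescaling the first $r$ columns by $1/\gamma_i$.

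The main (mild) obstacle is verifying the consistency of these $\binom{r}{2}$ equations. I would set $\gamma_1 = 1$ and $\gamma_j := -\alpha_{1j}/\beta_{1j}$ for $j > 1$, and then check the equation on each remaining pair $2 \le i < j \le r$. This follows from matroid structure: the edges $\{1, i\}$, $\{1, j\}$, and $\{i, j\}$ form a triangle of $M(K_{r+1})$, so the columns $\mbf{a}_{1i}, \mbf{a}_{1j}, \mbf{a}_{ij}$ of $\mbf{A}'$ are linearly dependent. Expanding this dependence in the basis $\{\mbf{e}_1, \mbf{e}_i, \mbf{e}_j\}$ and eliminating the dependence coefficients yields the relation $\alpha_{1i}\beta_{1j}\alpha_{ij} + \alpha_{1j}\beta_{1i}\beta_{ij} = 0$, which rearranges to precisely $\gamma_j/\gamma_i = -\alpha_{ij}/\beta_{ij}$, completing the plan.
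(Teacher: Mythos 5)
Your proposal is correct and self-contained. It takes a genuinely different route from the paper: the paper does not prove \Cref{lem: cliques are uniquely representable} directly but instead derives it as a special case of a general unique-representability theorem for binary matroids, citing \cite[Proposition~6.6.5]{Oxley}. Your argument is elementary and avoids any black-box theorem. After normalizing the basis block to $\mbf{I}_r$ by a single left-multiplication, you use the triangle circuits $\{\{i,r{+}1\},\{j,r{+}1\},\{i,j\}\}$ of $K_{r+1}$ to force each non-basis column into the form $\alpha_{ij}\mbf{e}_i + \beta_{ij}\mbf{e}_j$ with both coefficients nonzero, and then you verify the cocycle-type consistency condition $\gamma_j/\gamma_i = -\alpha_{ij}/\beta_{ij}$ using the circuits coming from the triangles $\{\{1,i\},\{1,j\},\{i,j\}\}$. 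The elimination giving $\alpha_{1i}\beta_{1j}\alpha_{ij} + \alpha_{1j}\beta_{1i}\beta_{ij} = 0$ is exactly right, and the rest of the bookkeeping (row scalings $\gamma_i$, followed by column rescaling on both the $\{i,j\}$-columns and on $[r]$ to restore $\mbf{I}_r$) closes the argument cleanly for all $r\ge 2$, including the vacuous consistency case $r=2$. What the paper's citation buys is brevity; what your direct proof buys is transparency about why the clique representation is rigid — it is essentially a hands-on reproof of the relevant special case of the unique-representability theorem, and a reader unfamiliar with that theory will likely find your version more instructive.
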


We next give a standardized representation of a matroid in $\mathcal M_{\Delta}$ with a spanning clique.
See \Cref{FigZ} for an illustration of such a representation.

\begin{proposition} \label{prop: standardized representation}
Let $r, \Delta \in \mbb{Z}_{\ge 1}$, and let $M$ be a simple rank-$r$, $\Delta$-modular matroid on ground set $[n]$.
Let $X \subseteq [n]$ be such that $M|X \cong M(K_{r+1})$, and let $B \subseteq X$ be a frame for $M|X$.
Then there is a set $B' \subseteq B$ with $|B'| \le (10\Delta^2 + 1) \lfloor \log_2\Delta\rfloor$ and a $\Delta$-modular $\mbb{R}$-representation $\mbf{A} \in \mathbb Z^{r \times n}$ of $M$ such that the following hold:
\begin{enumerate}[label=$(\roman*)$, leftmargin = *, noitemsep]

    \item\label{item:SC1}  $\mbf{A}[B]$ is upper-triangular, $\mbf{A}[B', B - B'] = \mbf{0}$, and $\mbf{A}[B - B', B - B'] = \mbf{I}_{r - |B'|}$.
    
    \item\label{item:SC2} Each non-zero column of $\mbf{A}[B - B', [n] - X]$ is a unit vector.

    \item\label{item:SC3} $\mbf{A}[X]$ is row-equivalent to $[\mbf{I}_r \, \mbf{D}_r]$, and $B$ indexes the identity submatrix.

    \item\label{item:SC4} Each column of $\mbf{A}$ has greatest common divisor $1$.
\end{enumerate}
\end{proposition}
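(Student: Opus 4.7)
The plan is to start with an arbitrary $\Delta$-modular representation $\mbf{A}^{0}$ of $M$ over $\mbb{R}$ and successively massage it via row operations and column scalings into a canonical integer form with the desired structure. Both types of operations preserve the vector matroid, and by carefully tracking their effect on subdeterminants (or, equivalently, by choosing a final integer column scaling that returns a $\Delta$-modular representative) we can maintain $\Delta$-modularity throughout.

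Step one normalizes the clique block. Since $B$ is a frame for $M|X$ it is a basis of $M$, so after row-reducing we may arrange $\mbf{A}^{0}[B] = \mbf{I}_r$. The identity map on $X$ is then an isomorphism from $M|X$ to $M[[\mbf{I}_r\,\mbf{D}_r]]$ carrying $B$ to the columns of $\mbf{I}_r$, so by \Cref{lem: cliques are uniquely representable} a further sequence of column scalings yields $\mbf{A}^{0}[X] = [\mbf{I}_r\,\mbf{D}_r]$ exactly. This immediately establishes \ref{item:SC3} and, in particular, that $\mbf{A}^{0}[B] = \mbf{I}_r$.

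Step two isolates the ``bad'' rows. Call a non-$X$ column $\mbf{v}$ of $\mbf{A}^{0}$ \emph{simple} if $\mbf{v}$ is a scalar multiple of some $\mbf{e}_i$, and call a row $i \in B$ \emph{bad} if some non-simple column $\mbf{v}$ of $\mbf{A}^{0}[[n]-X]$ satisfies $v_i \ne 0$. Let $B' \subseteq B$ be the set of bad rows. By construction every non-zero column of $\mbf{A}^{0}[B-B', [n]-X]$ has exactly one non-zero entry in $B-B'$, and rescaling each such column by the reciprocal of that entry makes it a unit vector, yielding \ref{item:SC2}. The central technical task is the bound $|B'| \le (10\Delta^2+1)\lfloor\log_2\Delta\rfloor$. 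I would derive it by invoking structural control on non-trivial extensions of a spanning clique in a $\Delta$-modular matroid from \cite{Paat-Stallknecht-Walsh-Xu-2024}; the $\lfloor\log_2\Delta\rfloor$ factor strongly suggests an iterative doubling argument that partitions bad rows into at most $\lfloor\log_2\Delta\rfloor$ levels according to the magnitudes of entries they see in non-simple columns, with each level containing $O(\Delta^2)$ rows (a quadratic-in-$\Delta$ bound of the same order that governs the column number of rank-$2$ $\Delta$-modular matroids).

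A short cleanup closes the argument. Since the column rescalings in step two only affect columns in $[n]-X$, the equation $\mbf{A}^{0}[B] = \mbf{I}_r$ is preserved, which in particular makes $\mbf{A}^{0}[B]$ upper-triangular with $\mbf{A}^{0}[B', B-B'] = \mbf{0}$ and $\mbf{A}^{0}[B-B', B-B'] = \mbf{I}_{r-|B'|}$, yielding \ref{item:SC1}. Dividing each column of $\mbf{A}^{0}$ by the gcd of its integer entries leaves $\mbf{A}^{0}[X]$ unchanged since every column of $[\mbf{I}_r\,\mbf{D}_r]$ already has gcd $1$, and only decreases the absolute values of subdeterminants, so it preserves $\Delta$-modularity and establishes \ref{item:SC4}. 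The main obstacle is the bound on $|B'|$: the definition of $\Delta$-modularity alone gives only per-column constraints on entry magnitudes, so the non-trivial content must come from the structural machinery of \cite{Paat-Stallknecht-Walsh-Xu-2024}.
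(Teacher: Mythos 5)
Your proposal gets the \ref{item:SC1}, \ref{item:SC3}, \ref{item:SC4} bookkeeping right, but the definition of $B'$ is wrong, and it is exactly the part of the proof where the bound $|B'| \le (10\Delta^2+1)\lfloor\log_2\Delta\rfloor$ has to be earned.

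You define a non-$X$ column $\mbf{v}$ as \emph{simple} when $\mbf{v}$ is a scalar multiple of some $\mbf{e}_i$ \emph{in the full matrix}, and then set $B'$ to be the union of supports of all non-simple columns. But since you have already arranged $\mbf{A}^0[B]=\mbf{I}_r$, a scalar multiple of $\mbf{e}_i$ would be parallel to the column $\mbf{e}_i \in X$, and $M$ is simple, so essentially no non-$X$ column is simple in your sense. Your $B'$ therefore collapses to $\bigcup_{a \in [n]-X}\supp(\mbf{A}^0[a])$, which can easily have size on the order of $r$: the typical non-$X$ column has its support spread across a handful of ``bad'' rows plus one row of $B$ that is specific to that column, and those single extra rows all get swept into your $B'$. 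The structural result you gesture at (the spanning-clique machinery from \cite{Paat-Stallknecht-Walsh-Xu-2024}) does not bound this set.

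What the paper actually does is two-staged, and the right notion is parallelism in a \emph{contraction}, not in the raw matrix. First, \cite[Proposition~3.1]{Paat-Stallknecht-Walsh-Xu-2024} produces a set $B_0 \subseteq B$ with $|B_0| \le 10\Delta^2\lfloor\log_2\Delta\rfloor$ so that every non-loop element of $M/B_0$ in $[n]-X$ is parallel in $M/B_0$ to some element of $B-B_0$; concretely, after row-reducing, each column of $\mbf{A}[B-B_0, [n]-X]$ is a scalar multiple of a unit vector. Second, let $B_1 \subseteq B-B_0$ consist of those rows $b$ for which $\mbf{A}[b, [n]-X]$ contains an entry of absolute value at least $2$. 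A determinant argument (stacking such entries on a diagonal together with the clique columns) forces $|B_1| \le \lfloor\log_2\Delta\rfloor$, or else some $r\times r$ subdeterminant exceeds $\Delta$; this is where the extra $+1$ in $(10\Delta^2+1)\lfloor\log_2\Delta\rfloor$ comes from, not from an ``iterative doubling'' partition of rows by magnitude. Taking $B' = B_0 \cup B_1$ then gives both the size bound and the property that the columns of $\mbf{A}[B-B', [n]-X]$ are $\pm$unit vectors, which a sign-flip turns into \ref{item:SC2}. Your proposal never supplies the first step, misidentifies the decomposition, and would produce a $B'$ violating the stated bound.

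One smaller remark: the paper is careful to permute and upper-triangularize $\mbf{A}[B]$ rather than forcing $\mbf{A}[B] = \mbf{I}_r$ outright, and postpones the use of \Cref{lem: cliques are uniquely representable} until after $B'$ is fixed, precisely because the column scalings needed to realize $[\mbf{I}_r\,\mbf{D}_r]$ must be shown not to disturb properties already established on $B-B'$. This ordering matters and your step-one-first layout would need the same check.
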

\begin{proof}
By swapping columns, we assume that $B = [r]$.
By \cite[Proposition 3.1]{Paat-Stallknecht-Walsh-Xu-2024}, there is a set $B_0 \subseteq B$ such that $|B_0| \le 10\Delta^2 \lfloor \log_2 \Delta \rfloor$ and each non-loop element of $M/B_0$ in $[n] - X$ is parallel to an element in $B - B_0$.
Set $m_0 := |B_0|$.
By swapping columns, we assume that $B_0 = [m_0]$.
By elementary row operations, we assume that $\mbf{A}[B]$ is upper-triangular; see \cite[\S 4.1]{AS1986}.
Hence, we assume that $\mbf{A}[B_0, B_0]$ and $\mbf{A}[B - B_0, B - B_0]$ are both upper-triangular.
Also, each column of $\mbf{A}[B - B_0, [n] - X]$ is parallel to a column of $\mbf{A}[B - B_0, B - B_0]$.
Let $B_1 \subseteq B - B_0$ be the set of elements $b$ for which the row vector $\mbf{A}[b, [n] - X]$ has an entry with absolute value at least $2$.
Then $|B_1| \le \lfloor \log_2 \Delta \rfloor$, otherwise $\mbf{A}$ has an $r \times r$ submatrix with absolute determinant greater than $\Delta$.
Set $m_1 := |B_1|$.
By swapping rows and columns while preserving the upper-triangularity of $\mbf{A}[B]$, we assume that $B_0 \cup B_1 = [m_0 + m_1]$.

Set $B' : = B_0 \cup B_1 $.
Note that $|B'| \le (10\Delta^2 + 1) \lfloor \log_2\Delta\rfloor$.
By elementary row operations, we assume that $\mbf{A}[B - B', B - B']$ is an identity matrix and $\mbf{A}[B', B - B'] = \mbf{0}$, so \ref{item:SC1} holds.
It follows from the choice of $B'$ that each column of $\mbf{A}[B - B', [n] - X]$ is a unit vector or a negative unit vector; by scaling columns by $-1$, we assume that each is a unit vector, so \ref{item:SC2} holds.
Since $B$ is a frame for $M(K_{r+1})$, we can permute the columns of $\mbf{A}[X - B]$ so that the identity map on $[\binom{r+1}{2}]$ is an isomorphism from $\mbf{A}[X]$ to $[\mbf{I}_r \, \mbf{D}_r]$.
By \Cref{lem: cliques are uniquely representable}, $[\mbf{I}_r \, \mbf{D}_r]$ can be obtained from $\mbf{A}[X]$ by elementary row operations and column scaling.
No scaling of columns in $B - B'$ is required.
Thus, by scaling columns in $B'$ and $X - B$, we assume that $\mbf{A}[X]$ is row-equivalent to $[\mbf{I}_r \, \mbf{D}_r]$, so \ref{item:SC3} holds.
As these column scalings preserve properties \ref{item:SC1} and \ref{item:SC2}, $\mbf{A}$ now satisfies \ref{item:SC1}, \ref{item:SC2}, and \ref{item:SC3}.
By \ref{item:SC3}, we see that each column of $\mbf{A}[X]$ has greatest common divisor $1$.
By scaling columns in $\mbf{A}[[n] - X]$ we assume that \ref{item:SC4} holds.
\end{proof}

\begin{figure}
\centering

$\mbf{A} = 
\begingroup
\setlength{\arraycolsep}{10pt}
\begin{bNiceArray}{c|c|c|c}[margin, cell-space-limits = 4pt]
\mbf{B}' & \mbf{0} & * & \mbf{A}'   \\
\cline{1-4}
\mbf{0} & \mbf{I}_{r-|B'|} & * & \text{unit or zero}  
\end{bNiceArray}
\endgroup$
    \caption{A standardized representation of a rank-$r$, $\Delta$-modular matroid with a spanning clique as described in \Cref{prop: standardized representation}. 
    Here, $\mbf{B}' = \mbf{A}[B', B']$ and $\mbf{A}' = \mbf{A}[B', E(M) - X]$.}
    \label{FigZ}
\end{figure}

In order to apply existing machinery from extremal matroid theory, we need an upper bound on the number of elements whose contraction significantly drops the density.
We introduce some terminology to describe such elements.
Let $e$ be an element of a matroid $M$, let $N$ be the restriction of $M$ to the union of all long lines of $M$ through $e$, and let $k \in \mbb{Z}_{\ge 0}$.
We say that $e$ is \textbf{$k$-critical} if $\elem(M) - \elem(M/e) > r(M) + k$, and \textbf{$k$-local-critical} if $\elem(N) - \elem(N/e) > r(N) + k$.
Let $\mcf L_M(e)$ (resp. $\mcf L_N(e)$) denote the set of long lines of $M$ (resp. $N$) through $e$.
From the formula
\[
\elem(M) - \elem(M/e) = 1 + \sum_{L \in \mcf{L}_M(e)} (|L| - 2),
\]
we see that $\elem(M) - \elem(M/e) = \elem(N) - \elem(N/e)$ because $\mcf{L}_M(e) = \mcf{L}_N(e)$.
Hence, if $e$ is $k$-critical, then $e$ is $k$-local critical.
However, the converse may not hold if $r(M)$ is larger than $r(N)$.
In order to show $\elem(M) \le \binom{r+1}{2} + k \cdot r(M)$, it is often useful to upper bound the number of $k$-critical elements of $M$; this approach has been used a number of times in previous work, see \cite{DensestPG, GN, OW2021, Paat-Stallknecht-Walsh-Xu-2024}.
In the following lemma, we bound the number of $k$-critical elements of $M$ by bounding the number of $k$-local-critical elements of $M$.
We only apply this result with $k = 2$ and $\ell = 5$ (the matroid $U_{2,7}$ is not $3$-modular by \cite[Proposition 8.10]{GNW2024}), but the more general statement may be useful for future work.

\begin{lemma} \label{lem: bounding k-local-critical points}
Let $k \in \mbb{Z}_{\ge 1}$.
Let $M$ be a simple matroid with a set $X \subseteq E(M)$ such that $M|X \cong M(K_{r(M) + 1})$, and let $B \subseteq X$ be a frame for $M|X$.
Suppose there is a set $B' \subseteq B$ such that every element in $E(M) - X$ is a loop of $M/B'$ or is parallel in $M/B'$ to an element in $B - B'$, and that each element in $B - B'$ is parallel in $M/B'$ to at most $k$ elements in $E(M) - X$.
Then the number of $k$-local-critical elements of $M$ is at most $|\cl(B')|$.
\end{lemma}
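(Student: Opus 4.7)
The plan is to show that every $k$-local-critical element of $M$ lies in $\cl(B')$; since $\cl(B')$ contains exactly $|\cl(B')|$ elements, the bound follows immediately.

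Fix $e \in E(M) - \cl(B')$ and set $F := \cl(B' \cup e)$, a flat of $M$ of rank $|B'|+1$. I will show that $e$ is not $k$-local-critical, i.e.\ $\elem(N) - \elem(N/e) \le r(N) + k$. Using the identity $\elem(N) - \elem(N/e) = 1 + \sum_{L \in \mcf L_M(e)}(|L|-2)$, it suffices to verify $\sum_L (|L|-2) \le r(N) + k - 1$. The strategy is to pass to the contraction $M/B'$. Since $e \notin \cl(B')$, the image $\bar e$ is a non-loop, and by hypothesis its parallel class is either $P_b$ for some $b \in B - B'$ (of size at most $|B'|+1+k$), or a singleton $\{\bar e\}$ (the latter occurring when $e \in X - B$ corresponds to an edge of $K_{r+1}$ whose two endpoints both index elements of $B - B'$). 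Moreover, $(M|X)/B'$ is the graphic matroid of the multigraph obtained by contracting in $K_{r+1}$ the star indexed by $B'$, which gives an explicit description of the parallel classes and rank-2 lines of $M/B'$ through $\bar e$.

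I would partition the long lines through $e$ into \emph{vertical} lines contained in $F$ and \emph{horizontal} lines not contained in $F$. Non-$e$ elements of vertical lines all lie in $F \setminus \{e\}$, and applying the general bound $\elem((M|F)/e) \ge r((M|F)/e) = |B'|$ to the matroid $M|F$ yields $\sum_{\text{vert}}(|L|-2) \le |F|-1-|B'| \le |\cl(B')| + k$, the last inequality using the parallel-class size estimate. Each horizontal line projects to a rank-2 line of $M/B'$ through $\bar e$, and its non-$e$ elements lie outside $F$. Using the explicit structure of $M/B'$, one enumerates the possible projections and bounds the horizontal contribution in terms of the set $Q$ of distinct horizontal projections. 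Crucially, distinct $q \in Q$ correspond to frame elements of $B - B'$ that remain independent after contracting $F$, so we obtain the rank estimate $r(N) \ge r(F \cap N) + |Q|$.

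The main obstacle will be handling multiple horizontal long lines projecting to the same rank-2 line of $M/B'$. For each $q \in Q$, applying $\elem((M|F_q)/e) \ge r((M|F_q)/e) = |B'|+1$ to the rank-$(|B'|+2)$ flat $F_q$ (the preimage of $q$) bounds the combined vertical and $q$-horizontal contribution in terms of $|F_q|$; carefully combining these per-$q$ bounds while subtracting the over-counted vertical contribution, and using the rank estimate as a consistency check, should yield the desired inequality. The crucial observation is that each rank-2 projection $q$ contributes exactly one unit of rank to $N$ above $r(F \cap N)$, matching the extra ``line'' in the horizontal sum, while the $k$-term slack absorbs the at-most-$k$ elements of $E(M) - X$ per parallel class guaranteed by the hypothesis.
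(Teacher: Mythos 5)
Your proposal starts from the right reduction — show that every element of $E(M) - \cl(B')$ fails to be $k$-local-critical — and the split into the two cases ($\bar e$ parallel to some $b \in B-B'$ in $M/B'$, versus $e \in X - B$ joining two uncontracted frame vertices) matches the paper's case structure. But the route you take within Case 1 is different from, and considerably harder than, the paper's, and the hard part is exactly where your write-up stops being a proof.

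Concretely, there are two genuine gaps. First, the rank estimate $r(N) \ge r(F \cap N) + |Q|$ is asserted, not proved, and it is not automatic: the preimage flats $F_q$ are distinct rank-$(|B'|+2)$ flats containing $F$ and pairwise meeting only in $F$, which does give pairwise independence of representatives over $F$, but you still need to argue that a full transversal of the $F_q \setminus F$ that actually lies in $E(N)$ is jointly independent over $F \cap N$ (and that $r(F \cap N)$, not $r(F)$, is the right base). Second, and more seriously, the final sentence — ``carefully combining these per-$q$ bounds while subtracting the over-counted vertical contribution \dots should yield the desired inequality'' — is the entire content of the lemma and is left as an aspiration. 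Each per-$q$ bound controls $V + H_q$ (vertical plus $q$-horizontal) by $|F_q| - |B'| - 2$; summing over $q$ gives $|Q|\,V + H \le \sum_q(|F_q| - |B'| - 2)$, from which an upper bound on $V + H$ requires a \emph{lower} bound on $V$, which you do not have (and $V$ can be $0$). So the combination you describe does not obviously close, and the $k$-slack coming from the hypothesis on $|B-B'|$-parallel classes has not actually been threaded through the estimate.

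For comparison, the paper avoids this bookkeeping entirely. It takes the fundamental-circuit incidence matrix of $M$ with respect to $B$ and, for the $b$ with $\bar e \parallel \bar b$ in $M/B'$, looks at the column support set $S_b$; the key observation is that every line of $M$ through $e$ contains at most one element of $E(M) - S_b$, so $\elem(N) - \elem(N/e) \le |S_b \cap E(N)|$. Since $S_b \cap X$ is independent of size $r(M)$ and $|S_b - X| \le k$ by hypothesis, the bound $r(N) + k$ falls out in one line, with Case 2 handled by a similar direct count of $3$-element circuits through $e$. Your flat-by-flat decomposition is a natural instinct, but you would need to supply the omitted inequality chain (and the independence argument for the horizontal projections) before this could count as a proof; as written it is a sketch with the central step missing.
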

\begin{proof}
Set $r := r(M)$.
Let $\mbf{A}$ be the fundamental-circuit incidence matrix of $M$ with respect to $B$; see \cite[pg. 182]{Oxley}.
Formally, $\mbf{A}$ is the binary matrix with rows indexed by $B$ and columns indexed by $E(M)$ such that $\mbf{A}[B] = \mbf{I}_{r(M)}$, and if $e \in E(M) - B$, then $\mbf{A}[b,e] = 1$ if and only if $b$ is in the unique circuit of $M|(B \cup e)$.
If $C$ is a circuit of $M$, then every row of $\mbf{A}[C]$ has at least two non-zero entries \cite[pg. 191]{Oxley}.
For each $b \in B - B'$, let $S_b$ be the set of elements $e \in E(M)$ such that $\mbf{A}[b,e] = 1$.
Note that $S_b \cap X$ is independent, $|S_b \cap X| = r$, and $|S_b - X| \le k$ for every $b \in B - B'$.

Let $e \in E(M) - \cl(B')$.
Let $N$ be the restriction of $M$ to all long lines through $e$.
Note that if $e \in X$, then $r(N) \ge r(N|X) = r$, so $r(N) = r$.
We will show that $e$ is not $k$-local-critical.
From this, it will follow that the number of $k$-local-critical elements of $M$ is at most $|\cl(B')|$.
%
%If $M$ has no $U_{2, \ell+2}$-minor then it follows from a result of Kung \cite[Theorem 4.3]{Kung93} that $M|\cl(B')$ has at most $\ell^{|B'|}$ points, so $|\cl(B')| \le \ell^{|B'|}$.
%
We consider two cases.

\smallskip
\noindent {\bf Case 1:}
Assume that $e$ is parallel in $M/B'$ to some $b \in B - B'$.
Let $S_b' = S_b \cap E(N)$.
Note that $S_b' \cap X$ is independent, so $r(N) \ge |S_b'\cap X|$.
Also, $|S_b' - X| \le k$.
Every line of $M$ through $e$ contains at most one element in $E(M) - S_b$, so $M/e\backslash (S_b-e)$ is simple.
It follows that $\elem(N) - \elem(N/e) \le |S_b'| \le |S_b' \cap X| + k \le r(N) + k$, so $e$ is not $k$-local-critical.

\smallskip
\noindent {\bf Case 2:}
Assume that $e$ is not parallel in $M/B'$ to an element in $B - B'$.
As $e \notin \cl(B')$, and thus is not a loop in $M/B'$, we have that $e \in X - B$ and there exist $b_1, b_2 \in B - B'$ such that $\{b_1, b_2, e\}$ is a circuit of $M$.
Every $3$-element circuit of $M$ through $e$ contains one element in $S_{b_1}$ other than $e$ and one element in $S_{b_2}$ other than $e$.
It follows that every long line of $M$ through $e$ has exactly three elements, and there are at most $|S_{b_1}| - 1 \le r + k - 1$ long lines of $M$ through $e$.
Therefore, $\elem(N) - \elem(N/e) \le r + k$.
We have $r(N) = r$ because $e \in X$, so $e$ is not $k$-local-critical.
\end{proof}
%

%%%%%%%%%%%%%%%%%%%%%%%%%%%%%%%%%%%%%%%%%%%%%
%%%%%%%%%%%%%%%%%%%%%%%%%%%%%%%%%%%%%%%%%%%%%
\subsection{\texorpdfstring{$3$}{}-element extensions of a clique}\label{subsec:SCC2}
%%%%%%%%%%%%%%%%%%%%%%%%%%%%%%%%%%%%%%%%%%%%%
%%%%%%%%%%%%%%%%%%%%%%%%%%%%%%%%%%%%%%%%%%%%%

In this subsection, we investigate extensions of cliques by up to $3$ elements while maintaining $3$-modularity; see \Cref{lem: possible columns,lem: characterization of 2-element independent flats,lem: no 3-element independent flats}.
We make the following observation to aid in the enumerations used throughout our investigation: using the linearity of the determinant, it can be checked that a matrix $\mbf{B}$ is $\Delta$-modular if and only if the matrix remains $\Delta$-modular after appending the row $-\mbf{1}^\top \mbf{B}$.
In particular, for a matrix $\mbf{Y} \in \mbb{Z}^{r \times k}$, the matrix
\begin{equation}\label{eqZeroSum0}
\left[
\begin{array}{@{\hskip .05cm}ccc@{\hskip .05 cm}}
\mbf{I}_r & \mbf{D}_r & \mbf{Y}
\end{array}
\right]
\end{equation}
is $\Delta$-modular if and only if the following matrix is $\Delta$-modular:
\begin{equation}\label{eqZeroSum1}
\begin{bNiceArray}{@{\hskip .05cm}c|c@{\hskip .05cm}}[margin]
\Block[c]{2-1}{\mbf{D}_{r+1}} &\mbf{Y} \\
& -\mbf{1}^\top \mbf{Y}
\end{bNiceArray}.
\end{equation}

The matrices in \eqref{eqZeroSum0} and \eqref{eqZeroSum1} have the same number of pairwise non-parallel columns.
The matrix in \eqref{eqZeroSum0} has unit columns in $\mbf{I}_r$ and differences of unit columns in $\mbf{D}_r$.
We often use the matrix in \eqref{eqZeroSum1} because it only has differences of unit columns, which will simplify our case analysis.

We begin with a general lemma about single-element extensions of a clique, i.e., matrices of the form \eqref{eqZeroSum1} with $\mbf{Y}$ being a single column.

\begin{lemma} \label{lem: extra column determinant}
Let $r \in \mbb{Z}_{\ge 1}$ and $\mbf{a} \in \mbb{R}^{r+1} - \{\mbf{0}\}$ satisfy $\mbf{1}^\top\mbf{a} = 0$.
Let $\mbf{C}(\mbf{a}) := [\mbf{D}_{r+1} \, \mbf{a}]$ be a matrix over $\mathbb{R}$.
It holds that
\[
\max \left\{|\det \mbf{B}|:\ \mbf{B}~\text{is an $r\times r$ submatrix of}~\mbf{C}(\mbf{a}) \right\} = 
\max\left\{1, \bigg|\max_{S \subseteq [r+1]} \sum_{i \in S} a_i\bigg|\right\}.
\]
\end{lemma}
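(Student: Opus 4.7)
The plan is to split the maximum based on whether $\mbf{B}$ uses the column $\mbf{a}$. If not, then $\mbf{B}$ is an $r \times r$ submatrix of $\mbf{D}_{r+1}$. As $\mbf{D}_{r+1}$ is the signed incidence matrix of $K_{r+1}$ with some orientation, it is totally unimodular, so $|\det \mbf{B}| \in \{0, 1\}$, with $|\det \mbf{B}| = 1$ attained whenever the chosen columns form a spanning tree of $K_{r+1}$. This accounts for the ``$1$'' on the right-hand side.

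If $\mbf{B}$ does use $\mbf{a}$, then it is indexed by rows $R = [r+1] - \{i\}$ and, besides $\mbf{a}$, by columns of $\mbf{D}_{r+1}$ corresponding to an edge set $E$ of $K_{r+1}$ with $|E| = r - 1$. Let $G = ([r+1], E)$. Since $|E| = r - 1$, $G$ has at least two connected components; if it has three or more, then the column space of $[\mbf{D}_{r+1}[E] \, \mbf{a}]$ has dimension at most $r - 1$, so $\det \mbf{B} = 0$. Otherwise $G$ has exactly two components $V_1, V_2$, and without loss of generality $i \in V_1$. I would then expand $\det \mbf{B}$ by cofactors along the column $\mbf{a}$:
\[
\det \mbf{B} \;=\; \sum_{j \in R} \pm\, a_j \cdot \det \mbf{D}_{r+1}[R - j, E].
\]
By total unimodularity, each $(r-1)\times(r-1)$ minor $\det \mbf{D}_{r+1}[R - j, E]$ lies in $\{-1, 0, 1\}$; moreover, from the graph-theoretic interpretation of $\mbf{D}_{r+1}$, it equals $\pm 1$ exactly when $E$ is a spanning forest of $K_{r+1}$ with two components, one containing $i$ and the other containing $j$. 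Under our assumption on $G$, this happens iff $j \in V_2$, so the expansion simplifies to $\det \mbf{B} = \sum_{j \in V_2} \epsilon_j a_j$ for some signs $\epsilon_j \in \{\pm 1\}$.

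The crux is to show that all $\epsilon_j$ with $j \in V_2$ agree, yielding $|\det \mbf{B}| = |\sum_{j \in V_2} a_j|$. The plan here is a substitution trick: the identity $\det \mbf{B} = \sum_{j \in V_2} \epsilon_j a_j$ is polynomial in the entries of $\mbf{a}$, so it holds for every $\mbf{a} \in \mbb{R}^{r+1}$. For distinct $k, \ell \in V_2$, the vector $\mbf{e}_k - \mbf{e}_\ell$ lies in the column span of $\mbf{D}_{r+1}[E]$ because $k$ and $\ell$ are joined by a path in the connected subgraph $V_2$; evaluating at $\mbf{a} := \mbf{e}_k - \mbf{e}_\ell$ therefore forces $[\mbf{D}_{r+1}[E] \, \mbf{a}]$ to have rank $r - 1$ and hence $\det \mbf{B} = 0$. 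Plugging this choice into the formula gives $\epsilon_k - \epsilon_\ell = 0$, so the signs agree across $V_2$. Combining the two cases,
\[
\max |\det \mbf{B}| \;=\; \max\Big\{\, 1,\; \max_{S \subseteq [r+1]} \big|\textstyle\sum_{i \in S} a_i\big| \,\Big\},
\]
and because $\mbf{1}^\top \mbf{a} = 0$ lets one swap $S$ with $[r+1] - S$ to flip the sign of the sum, this equals the quantity stated in the lemma. The main obstacle is the sign alignment in the cofactor expansion, and the substitution trick above dispatches it without any explicit sign bookkeeping.
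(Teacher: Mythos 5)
Your proof is correct, and it takes a genuinely different route from the paper's. The paper proceeds by induction on $r$: in any $r\times r$ submatrix using $\mbf{a}$, the other $r-1$ columns form a spanning tree on $[r]$, and the paper peels off a leaf vertex, performs a column operation that zeroes that row of the $\mbf{a}$-column (merging two entries $\overline a_\ell, \overline a_j$ into $\overline a_\ell + \overline a_j$, which can only decrease $\Gamma$), expands along the resulting unit row, and recurses. You instead expand directly by cofactors along the $\mbf{a}$-column, observe that exactly the rows indexed by the component $V_2$ of $G = ([r+1],E)$ not containing the deleted vertex contribute nonzero minors, and then fix the relative signs via the substitution $\mbf{a} \mapsto \mbf{e}_k - \mbf{e}_\ell$ for $k,\ell \in V_2$ (which lies in the column span of $\mbf{D}_{r+1}[E]$, forcing $\det \mbf{B}=0$ and hence $\epsilon_k = \epsilon_\ell$). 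This gives the clean identity $|\det\mbf{B}| = |\sum_{j\in V_2} a_j|$ in one step, without induction. Both arguments are sound; yours is more structural and exposes the bijection between nonzero submatrix determinants and vertex-set splits $(V_1,V_2)$, whereas the paper's inductive version avoids any explicit sign or rank bookkeeping and reuses the same leaf-peeling idea for both the upper bound and the lower-bound construction. One small elision: in passing from $|\sum_{j\in V_2}a_j|$ over all achievable $V_2$ (which excludes $\emptyset$ and $[r+1]$) to $\max_S|\sum_{i\in S}a_i|$ over all $S\subseteq[r+1]$, it is worth noting that since $\mbf{a}\neq\mbf 0$ and $\mbf{1}^\top\mbf a = 0$ the maximum is positive and is attained at $S=\{i:a_i>0\}$, a proper nonempty subset, so the two optimizations agree; your final remark about swapping $S$ with its complement handles the rest.
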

\begin{proof}
We use the following notation:
\begin{align*}
\Delta(\mbf{a}) &:= \max \left\{|\det \mbf{B}|:\ \mbf{B}~\text{is an $r\times r$ submatrix of}~\mbf{C}(\mbf{a}) \right\} \\
\Gamma(\mbf{a}) &:= \bigg|\max_{S \subseteq [r+1]} \sum_{i \in S} a_i\bigg|.
\end{align*}
We prove $\Delta(\mbf{a}) = \max\{1,\Gamma(\mbf{a})\}$ by induction on $r$.
If $r = 1$, then $\mbf{C}(\mbf{a})  = [\mbf{e}_1 -\mbf{e}_2~~~ \mbf{a}]$, where $\mbf{a} = [a_1, -a_1]^\top$ for some non-zero $a_1\in \mbb{R}$.
It can be checked that $\Delta(\mbf{a}) = \max\{1, |a_1|\}$ and $\Gamma(\mbf{a}) = |a_1|$.
Thus, $\Delta(\mbf{a}) = \max\{1,\Gamma(\mbf{a})\}$.
Assume $\Delta(\mbf{a}) = \max\{1,\Gamma(\mbf{a})\}$ holds for all $k \in [r]$ and $\mbf{a} \in \mbb{R}^k - \{\mbf{0}\}$, and let $\mbf{a} \in \mbb{R}^{r+1} - \{\mbf{0}\}$.

First, we prove that $\Delta(\mbf{a}) \le \max\{1,\Gamma(\mbf{a})\}$.
Let $\mbf{B}$ be an $r\times r$ non-singular submatrix of $\mbf{C}(\mbf{a}) $.
The matrix $\mbf{D}_{r+1}$ is the incidence matrix of a directed graph, so it is TU; see, e.g., \cite{AS1986}.
Hence, if $\mbf{B}$ is a submatrix of $\mbf{D}_{r+1}$, then $|\det( \mbf{B})| \le 1 \le \max\{1,\Gamma(\mbf{a})\}$.
%

%and let $\overline{\mbf{a}}$ denote the $r$-th column of $\mbf{B}$.
Assume that $\mbf{B}$ contains a length-$r$ subvector $\overline{\mbf{a}}$ of $\mbf{a}$ as a column, and without loss of generality assume $\mbf{B}[r] = \overline{\mbf{a}}$.
The matrix $\mbf{B}[[r-1]]$ is a rank-$(r-1)$ submatrix of $\mbf{D}_{r+1}$.
Hence, $\mbf{B}[[r-1]]$ forms the node-arc incidence matrix of a directed spanning tree $T$ on vertex set $[r]$. 
Hence, there is a row, say $\ell$, of $\mbf{B}[[r-1]]$ corresponding to a leaf of $T$. 
In other words, there is a column $c \in [r-1]$ and an index $j \in [r] - \{\ell\}$ such that $\mbf{B}[\ell,c] = -\mbf{B}[j,c]$, and $\mbf{B}[\ell,c]$ is the only non-zero entry in row $\mbf{B}[\{\ell\}, [r-1]]$.
After possibly multiplying column $\mbf{B}[c]$ by $-1$, we assume that $\mbf{B}[r, c] = 1$.
Replace column $\mbf{B}[r] = \overline{\mbf{a}}$ by $\mbf{a}' := \overline{\mbf{a}} - \overline{a}_{\ell} \cdot \mbf{B}[c]$; call the new matrix $\mbf{B}'$.
Note that $|\det (\mbf{B})| = |\det (\mbf{B}')|$, and row $\mbf{B}'[\{\ell\}, [r]]$ is a unit row.
Furthermore, $a'_{\ell} = 0$, $a'_{j} = \overline{a}_j + \overline{a}_{\ell}$, and $a'_i = \overline{a}_i$ for all $i \notin \{j,\ell\}$.
Let $\mbf{a}''$ denote the length-$(r-1)$ subvector of $\mbf{a}'$ omitting index $\ell$.
As $a'_{j} = \overline{a}_j + \overline{a}_{\ell}$, we have that $\Gamma(\overline{\mbf{a}}) \ge \Gamma(\mbf{a}') = \Gamma(\mbf{a}'')$.
Hence, we can apply Laplace expansion along row $\mbf{B}'[\{\ell\}, [r]]$ and use induction on $\mbf{a}''$ to conclude that $|\det (\mbf{B})| \le \Delta(\mbf{a}'') = \max\{1,\Gamma(\mbf{a}'')\} \le \max\{1,\Gamma(\mbf{a})\}$.

We now show that $\Delta(\mbf{a}) \ge \max\{1,\Gamma(\mbf{a})\}$.
%}
Each non-singular $r\times r$ submatrix of ${\mbf D}_{r+1}$ has absolute determinant equal to $1$, so $\Delta(\mbf{a}) \ge 1$.
Choose any $S \subsetneq [r+1]$ with $|\sum_{i\in S} a_i| > 0$.
Without loss of generality, suppose $S = [|S|]$ and $r+1 \not \in S$.
Define an $(r+1)\times r$ submatrix $\overline{\mbf{B}}$ of $\mbf{C}(\mbf{a})$ as follows: 
for $i \in [|S|-1]$, set $\overline{\mbf{B}}[i] := \mbf{e}_i - \mbf{e}_{i+1}$; for $i \in \{|S|, \dotsc, r-1\}$, set $\overline{\mbf{B}}[i] := \mbf{e}_{i+1} - \mbf{e}_{r+1}$; finally, set $\overline{\mbf{B}}[r] := \mbf{a}$.
Let $\mbf{B}$ be the $r\times r$ submatrix of $\mbf{C}(\mbf{a})$ defined by deleting row $r+1$ from $\overline{\mbf{B}}$.
We claim that $|\det (\mbf{B})| = |\sum_{i \in S} a_i|$, which will complete the proof that $\Delta(\mbf{a}) \ge \max\{1,\Gamma(\mbf{a})\}$ because $S$ was chosen arbitrarily.
To this end, we apply two sets of column operations.
For each $j \in [|S|-1]$, add $\sum_{i=j+1}^{|S|} a_j$ times $\mbf{B}[j]$ to column $\mbf{B}[r]$.
For each $j \in \{|S|, \dotsc, r-1\}$, subtract $a_j$ times $\mbf{B}[j]$ from column $\mbf{B}[r]$.
After these column operations, the $r$-th column will only have one non-zero entry, namely $a_1+\sum_{i=2}^{|S|} a_i = \sum_{i\in S} a_i$ appears in row $1$.
As the first $r-1$ columns of $\mbf{B}$ come from a TU matrix, we can expand along this new $r$-th column to conclude that $|\det (\mbf{B})| = |\sum_{i \in S} a_i|$.
\end{proof}

We now apply \Cref{lem: extra column determinant} to characterize all rank-$r$, $3$-modular matrices obtained by adding a column $\mbf{a}$ to $\mbf{D}_{r+1}$.

\begin{lemma} \label{lem: possible columns}
If $[\mbf{D}_{r+1} \, \mbf{a}]$ is a $3$-modular matrix with non-zero, pairwise non-parallel columns and rows that sum to $\mbf{0}$, then $\underline{\mbf{a}}$ is one of the following vectors, up to permuting rows and scaling by $-1$:
\[
\left[
\begin{array}{@{\hskip .05cm}r@{\hskip .05cm}}
    -3\\
    2\\
    1
\end{array}\right],
\left[
\begin{array}{@{\hskip .05cm}r@{\hskip .05cm}}
    -2\\
    1\\
    1
\end{array}\right],
\left[
\begin{array}{@{\hskip .05cm}r@{\hskip .05cm}}
    -3\\
    1\\
    1\\
    1
\end{array}\right],
\left[
\begin{array}{@{\hskip .05cm}r@{\hskip .05cm}}
    -2\\
    2\\
    -1\\
     1
\end{array}\right],
\left[
\begin{array}{@{\hskip .05cm}r@{\hskip .05cm}}
    -1\\
    -1\\
    1\\
    1
\end{array}\right],
\left[
\begin{array}{@{\hskip .05cm}r@{\hskip .05cm}}
    -2\\
    -1\\
    1\\
    1\\
    1
\end{array}\right],
\left[
\begin{array}{@{\hskip .05cm}r@{\hskip .05cm}}
    -1\\
    -1\\
    -1\\
    1\\
    1\\
    1
\end{array}\right].
\]
\end{lemma}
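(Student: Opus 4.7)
The plan is to derive the classification by converting the modularity constraint into an arithmetic constraint on subset sums of $\mbf{a}$, and then running a short case analysis on the multiset of nonzero entries.

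First I would apply \Cref{lem: extra column determinant}: since $[\mbf{D}_{r+1}\,\mbf{a}]$ is $3$-modular, we have $|\sum_{i\in S} a_i| \le 3$ for every $S\subseteq[r+1]$. Because $\mbf{1}^\top\mbf{a}=0$, replacing $S$ by $[r+1]-S$ negates the sum, so the signed max and the absolute max agree, and we really do get the symmetric bound $\max_{S}\,|\sum_{i\in S}a_i|\le 3$. Taking $S=\{i\}$ gives $|a_i|\le 3$ for all $i$, so each entry of $\mbf{a}$ lies in $\{-3,-2,-1,0,1,2,3\}$. Let $P=\{i:a_i>0\}$ and $N=\{i:a_i<0\}$, and set $\sigma=\sum_{i\in P}a_i$, which by $\mbf{1}^\top\mbf{a}=0$ equals $-\sum_{i\in N}a_i$. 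Taking $S=P$ yields $\sigma\le 3$, and since $\mbf{a}\ne\mbf{0}$ and $\sigma$ is a positive integer, $\sigma\in\{1,2,3\}$.

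Next I would enumerate, for each value of $\sigma$, the possible integer partitions of $\sigma$ into positive parts used by $a_i$ on $P$ and by $-a_i$ on $N$. Up to exchanging $P$ and $N$ (which corresponds to scaling $\mbf{a}$ by $-1$) and permuting rows, this yields a finite list of candidates for $\underline{\mbf{a}}$: for $\sigma=1$ only $(1,-1)$; for $\sigma=2$ the options $(2,-2)$, $(2,-1,-1)$, and $(1,1,-1,-1)$; for $\sigma=3$ the options $(3,-3)$, $(3,-2,-1)$, $(3,-1,-1,-1)$, $(2,1,-2,-1)$, $(2,1,-1,-1,-1)$, and $(1,1,1,-1,-1,-1)$. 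The parallel hypothesis excludes precisely the three candidates $(1,-1)$, $(2,-2)$, and $(3,-3)$, since these make $\mbf{a}$ a multiple of $\mbf{e}_i-\mbf{e}_j$.

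It then remains to verify that each of the seven surviving candidates actually satisfies the global bound $|\sum_{i\in S}a_i|\le 3$ for every subset $S$ of its support; this is a direct check on very short vectors (at most six nonzero entries), so I would simply record it inline or note that in each case the subset sums are bounded by the positive sum $\sigma$ and the negative sum $-\sigma$, both of absolute value at most $3$. The main obstacle is nothing deep but rather bookkeeping: making sure the case split by partitions of $\sigma$ on $P$ and $N$ is exhaustive, that the symmetry $\mbf{a}\mapsto-\mbf{a}$ is correctly used to halve the cases, and that the three excluded candidates are exactly the ones parallel to a column of $\mbf{D}_{r+1}$.
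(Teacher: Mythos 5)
Your proposal is correct and follows the same route the paper does: apply Lemma \ref{lem: extra column determinant} to reduce $3$-modularity to the subset-sum bound, observe that $\mbf{1}^\top\mbf{a}=0$ makes the positive-entry sum $\sigma\in\{1,2,3\}$, enumerate the partition pairs for $P$ and $N$ up to permutation and the $\mbf{a}\mapsto-\mbf{a}$ symmetry, and discard the three candidates parallel to a column of $\mbf{D}_{r+1}$. Your write-up is merely more explicit than the paper's terse ``one can check,'' and the observation that the max subset sum is exactly $\sigma$ (so no further verification is really needed) is a nice clarification rather than a divergence.
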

\begin{proof}
By \Cref{lem: extra column determinant}, we need to enumerate all $\mbf{a} \in \mbb{Z}^{r+1}$ such that $\mbf{a}$ is not parallel to a column of $\mbf{D}_{r+1}$, $|\max_{S \subseteq [r+1]} \sum_{i \in S} a_i| \le 3$ and $\mbf{1}^\top \mbf{a} = 0$.
The positive components (respectively, negative) components of $\mbf{a}$ form one of the multisets $\{1,1\}$, $\{2\}$, $\{3\}$, $\{1,2\}$, $\{1,1,1\}$ (respectively, the negatives of these multisets).
One can check that the seven choices listed above are the only options that do not lead to columns parallel to a column of $\mbf{D}_{r+1}$, up to permuting rows and scaling by $-1$.
\end{proof}

We now investigate $3$-modular matrices of the form $[\mbf{D}_{r+1} \, \mbf{a} \, \mbf{b}]$.
We will need the following column bound proved by Lee et al. \cite{LPSX2021}.

\begin{theorem}\cite[Theorem 1.2]{LPSX2021} \label{thm: LPSX}
For $\Delta, r\in \mbb{Z}_{\ge 1}$, we have $s(\Delta, r) \le \Delta^2 \binom{r+1}{2}$.
\end{theorem}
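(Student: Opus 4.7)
The plan is to prove $s(\Delta, r) \le \Delta^2 \binom{r+1}{2}$ by induction on $r$. The base case $r = 1$ is immediate since any two non-zero vectors in $\mbb{R}^1$ are parallel, so $s(\Delta,1) = 1 \le \Delta^2$. For the inductive step, I fix a simple, rank-$r$, $\Delta$-modular matroid $M$ and aim to find a point $p \in E(M)$ for which the contraction drop satisfies $\elem(M) - \elem(M/p) \le \Delta^2 r$. Since $M/p$ is itself $\Delta$-modular of rank $r-1$, the inductive hypothesis gives $\elem(M/p) \le \Delta^2 \binom{r}{2}$, and adding the two bounds yields the desired $\elem(M) \le \Delta^2 \binom{r+1}{2}$.

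The controlling identity is
\[
\elem(M) - \elem(M/p) = 1 + \sum_{L \in \mcf{L}_M(p)}(|L| - 2),
\]
together with the per-line bound $|L| \le 2\Delta + 1$ that follows from $U_{2, 2\Delta + 2} \notin \mcf{M}_\Delta$ (so each summand is at most $2\Delta - 1$). The difficult step is locating a point $p$ whose local sum is small enough. A worst-case count of long lines through a fixed $p$ is too weak. The route I would take is double counting: summing the identity over all $p \in E(M)$ gives
\[
\sum_{p \in E(M)}\bigl(\elem(M) - \elem(M/p)\bigr) = \elem(M) + \sum_{L \text{ long in } M}|L|\,(|L|-2),
\]
and then a good $p$ can be extracted by averaging, provided one establishes an auxiliary upper bound on $\sum_L |L|\,(|L|-2)$ of order $\Delta^2 r \cdot \elem(M)$.

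This auxiliary bound on the total weighted count of long lines is where the main technical work lies, and I expect it to be the principal obstacle. One approach is to bound the contribution of long lines by summing over rank-$2$ flats of $M$, each of which carries at most $2\Delta + 1$ points. An alternative, perhaps cleaner, linear-algebraic route is to fix a basis $B$ of $\mbf{A}$, make $\mbf{A}[B]$ upper-triangular by row operations, and note that by Cramer's rule each non-basis column has a coordinate vector $\mbf{y}$ with $\det(\mbf{A}[B]) \cdot \mbf{y} \in \mbb Z^r$ and entries of absolute value at most $\Delta$. Partitioning the columns according to which pair of basis indices carries their ``dominant'' support, one gets $\binom{r+1}{2}$ classes (including $r$ singleton cases), and bounded $2 \times 2$ subdeterminants cap each class by $\Delta^2$ pairwise non-parallel elements; this would bypass the induction entirely. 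In either route, the chief subtlety I anticipate is handling overcounting when a column lies in several rank-$2$ flats, which I would manage by a careful ``first-pair'' ordering on basis indices so that each column is charged to a unique class.
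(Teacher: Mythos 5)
The paper does not prove this bound; it is imported from \cite[Theorem 1.2]{LPSX2021} and used as a black box (for instance in the proofs of \Cref{lem: finding a stack} and \Cref{prop: the spanning clique case}), so there is no in-paper argument to compare yours against.

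Judged on its own, neither of your two routes is complete, and the gaps you yourself flag are genuine rather than routine. In the inductive route, the averaging step needs $\sum_{L\ \text{long}} |L|(|L|-2) \le (\Delta^2 r - 1)\,\elem(M)$; since each long line contributes at most $(2\Delta+1)(2\Delta-1) < 4\Delta^2$, this essentially requires the number of long lines to be at most about $r\cdot\elem(M)/4$, and you offer no argument for that. Bounding the number of long lines in a near-extremal $\Delta$-modular matroid is exactly the kind of structural control the theorem is meant to deliver, so assuming such a bound risks circularity. In the linear-algebraic route, the assertion that bounded $2\times 2$ subdeterminants cap each ``leading-pair'' class at $\Delta^2$ pairwise non-parallel columns does not hold as stated: two columns with the same leading pair $(i,j)$ can have parallel projections onto coordinates $i,j$ and yet be non-parallel overall because they differ in a coordinate beyond $j$ (compare $(1,1,0,\dots,0)^\top$ and $(1,1,1,0,\dots,0)^\top$, both perfectly compatible with $\Delta$-modularity), so the $2\times 2$ minor on rows $i,j$ does not control the count within a class. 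You would also need to address the case $|\det(\mbf{A}[B])| > 1$, where $\mbf{A}[B]^{-1}\mbf{A}$ is not integral and the relevant $2\times 2$ minors are bounded by $\Delta/|\det(\mbf{A}[B])|$ rather than $\Delta$. Both sketches correctly identify where the difficulty lies, but neither resolves it.
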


We will also need the following definition from \cite{Paat-Stallknecht-Walsh-Xu-2024}, which is based on an earlier definition from \cite{HalesJewett}.
For integers $m \ge 2$ and $h \ge 1$ and a collection $\mathcal O$ of matroids, a matroid $M$ is an {\bf$(\mcf{O},m,h)$-stack} if there are disjoint sets $P_1,\dotsc,P_h \subseteq E(M)$ such that $\cup_{i=1}^h P_i$ spans $M$, and for each $i \in [h]$, the matroid $(M/\cup_{j=1}^{i-1} P_j)|P_i$ has rank at most $m$ and is not in $\mcf{O}$.
In \cite[Proposition 2.2]{Paat-Stallknecht-Walsh-Xu-2024} it was proved that $\mathcal M_3$ does not contain an $(\mathcal{M}_1, m, 2)$-stack with $m \ge 2$.

The following lemma shows that if $[\mbf{D}_{r+1} \, \mbf{a} \, \mbf{b}]$ is a $3$-modular matrix, then the supports of $\mbf{a}$ and $\mbf{b}$ have a small symmetric difference.
Recalling \eqref{eqZeroSum1}, we assume $\mbf{1}^\top\mbf{a} = \mbf{1}^\top \mbf{b} = 0$.

\begin{lemma} \label{lem: finding a stack}
Let $M$ be a simple rank-$r$, $3$-modular matroid with a set $X \subseteq E(M)$ such that $M|X \cong M(K_{r+1})$.
Let $\mbf{A}$ be an $\mathbb R$-representation of $M$ with $r+1$ rows that sum to $\mbf{0}$.
If there exist elements $a,b \in E(M) - X$ such that $\mbf{A}[X \cup \{a,b\}] = [\mbf{D}_{r+1} \, \mbf{a} \,\mbf{b}]$ and $|\supp(\mbf{b})| \le r$, then the vector 
\[
\mbf{b}' := \mbf{A}[\supp(\mbf{b}) - \supp(\mbf{a}), b]
\]
is a scalar multiple of a unit vector or a difference of two unit vectors. 
Moreover, if $\mbf{A}$ is $3$-modular, then the scalar is $1$ or $-1$.
\end{lemma}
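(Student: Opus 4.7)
The overall plan is to contract $M$ along a spanning tree of the vertex set $W := [r+1] - S_b$ to reduce to a single-column extension of a smaller clique, then apply \Cref{lem: possible columns} to the resulting minor, and finally rule out the residual ``bad'' cases using the stack-free property of $\mathcal{M}_3$; the moreover statement is then proved by an $r\times r$ determinant calculation.

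For the contraction, note that $\supp(\mathbf{a}) \subseteq W$ and $\sum_{i=1}^{r+1} a_i = 0$, so $\sum_{i \in W} a_i = 0$. Contracting a spanning tree of $W$ using edges from $X$ therefore sends $\mathbf{a}$ to the zero column, so $a$ becomes a loop and is deleted. The resulting minor $N$ of $M$ has rank $|S_b|$, lies in $\mathcal{M}_3$, and has a spanning clique $M(K_{|S_b|+1})$ on $S_b \cup \{*\}$, where $*$ is the identified vertex of $W$. In the $\mathbf{A}$-induced representation, the element $b$ is represented by $\mathbf{b}_{\text{new}} \in \mathbb{R}^{|S_b|+1}$ with $\mathbf{b}_{\text{new}}[i] = b_i$ for $i \in S_b$ and $\mathbf{b}_{\text{new}}[*] = -\sum_{i \in S_b} b_i$. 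Since $N \in \mathcal{M}_3$, it admits a $3$-modular representation; by \Cref{lem: cliques are uniquely representable}, this representation has the form $[\mathbf{D}_{|S_b|+1}\, \mathbf{b}^*]$ with $\mathbf{b}^*$ parallel to $\mathbf{b}_{\text{new}}$. Applying \Cref{lem: possible columns}, $\underline{\mathbf{b}_{\text{new}}}$ is either parallel to a column of $\mathbf{D}_{|S_b|+1}$ or is a scalar multiple of one of the seven listed vectors.

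In the first case, since $\mathbf{b}'$ has exactly $|S_b|$ non-zero entries by definition of $S_b$, a direct case analysis on the support of $\mathbf{b}_{\text{new}}$ yields that either $|S_b| = 2$ with the support contained in $S_b$ (so $\mathbf{b}' = \lambda(\mathbf{e}_i - \mathbf{e}_j)$), or $|S_b| = 1$ with $*$ in the support (so $\mathbf{b}' = \lambda \mathbf{e}_k$), or $|S_b| = 0$ (so $\mathbf{b}'$ is empty). To rule out the remaining seven-type case, I would invoke the fact that $\mathcal{M}_3$ contains no $(\mathcal{M}_1, m, 2)$-stack for $m \geq 2$ (Proposition 2.2 of \cite{Paat-Stallknecht-Walsh-Xu-2024}). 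The natural split is $P_1 := \supp_X(\mathbf{a}) \cup \{a\}$, the edges of $X$ with both endpoints in $\supp(\mathbf{a})$ together with $a$, and $P_2 := (X \cup \{b\}) - P_1$; then $P_1 \cup P_2$ spans $M$, $M|P_1 \cong M[\mathbf{D}_{|\supp(\mathbf{a})|}\, \underline{\mathbf{a}}]$, and $(M/P_1)|P_2$ is (after simplification) of the form $M[\mathbf{D}_{k+1}\, \underline{\mathbf{b}_R}]$, where $\mathbf{b}_R$ has the same direction as $\mathbf{b}_{\text{new}}$ and hence is a seven-type. For every pair of types from \Cref{lem: possible columns} in which both vectors have an entry of magnitude $\geq 2$, each of $M|P_1$ and $(M/P_1)|P_2$ contains a $U_{2,4}$-minor and is non-regular of rank at most $5$, producing the required stack and a contradiction. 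The main obstacle is handling the all-$\pm 1$ types $5$ and $7$ for $\mathbf{a}$, for which $M|P_1$ is regular; in these cases I would either augment $P_1$ with additional clique edges to produce a non-regular restriction, or swap the roles of $\mathbf{a}$ and $\mathbf{b}$, exploiting the non-regular structure of $\mathbf{b}_{\text{new}}$ instead.

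For the moreover statement, assume $\mathbf{A}$ is $3$-modular and write $\mathbf{b}' = \lambda \mathbf{v}$ for $\mathbf{v}$ a unit vector or difference of two unit vectors. I would exhibit an $r \times r$ submatrix of $\mathbf{A}$ whose columns are $\mathbf{a}$, $\mathbf{b}$, and $r - 2$ appropriately chosen columns of $\mathbf{D}_{r+1}$ that restrict to unit-like vectors on the chosen row set, so that Laplace expansion along the unit-like columns reduces the determinant to a $2 \times 2$ subdeterminant whose absolute value equals $|\lambda c|$, where $c$ is a non-zero entry of $\mathbf{a}$ at an index outside $S_b$. By $3$-modularity, $|\lambda c| \leq 3$; choosing $c$ to be a largest-magnitude entry of $\mathbf{a}$ (which has absolute value $2$ or $3$ for types $1$--$4$ and $6$ in \Cref{lem: possible columns}) forces $|\lambda| \leq 1$, hence $\lambda \in \{\pm 1\}$. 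The exceptional types $5$ and $7$ of $\mathbf{a}$, which contain only $\pm 1$ entries, require the analogous determinantal argument using instead a large-magnitude entry of $\mathbf{b}$ that lies in $\supp(\mathbf{a})$ (such an entry exists by examining the seven possible shapes of $\mathbf{b}$ and the assumption $|\lambda| \geq 2$).
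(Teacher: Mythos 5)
Your proposal takes a genuinely different route from the paper, and it contains gaps that make it not quite work as stated.

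The paper's proof is more direct: it deletes a single row of $\mbf{A}$ in $\supp(\mbf{a})$ (rather than contracting a spanning tree of $W$) and then reads off, after row and column permutations, a block-triangular submatrix whose top block is $[\mbf{I}_k\,\mbf{D}_k\,\mbf{a}']$ and whose bottom block is $[\mbf{I}_n\,\mbf{D}_n\,\mbf{b}']$. Each block has $\binom{k+1}{2}+1$ (resp.\ $\binom{n+1}{2}+1$) pairwise non-parallel columns, which exceeds Heller's bound, so by \Cref{thm: LPSX} neither vector matroid is in $\mcf{M}_1$ once $\mbf{b}'$ is ``bad''; this immediately yields a forbidden $(\mcf{M}_1,\max(k,n),2)$-stack. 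The ``moreover'' part is then a one-line block-determinant argument: both blocks contribute a $2\times 2$ subdeterminant of absolute value $\ge 2$, giving a full subdeterminant $\ge 4$. Crucially, the paper never needs to know the \emph{shape} of $\mbf{a}$ or $\mbf{b}$ — only a count of points.

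Two gaps in your version. First, the assertion that after contraction the $3$-modular representation of $N$ has its $b$-column parallel to $\mbf{b}_{\text{new}}$ does not follow from \Cref{lem: cliques are uniquely representable}: that lemma is about the clique columns only, and single-element extensions of a clique are not in general projectively unique (e.g.\ $[\mbf{I}_3\,\mbf{D}_3\,(1,1,1)^\top]$ and $[\mbf{I}_3\,\mbf{D}_3\,(1,2,3)^\top]$ represent the same matroid with non-parallel extension columns). You would need a separate stability argument for this step, and you do not give one. Second, your worry about the ``all-$\pm 1$ types $5$ and $7$'' of $\mbf{a}$ making $M|P_1$ regular is a red herring you never resolve. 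In fact $M|P_1$ is never regular, regardless of the shape of $\underline{\mbf{a}}$: with $p := |\supp(\mbf{a})|$, the restriction $M|P_1$ has $\binom{p}{2}+1$ pairwise non-parallel elements at rank $p-1$, exceeding $\binom{p}{2}$. The same point-counting that the paper uses (via \Cref{thm: LPSX}) closes this gap cleanly; the detour through \Cref{lem: possible columns} and case analysis on the seven shapes is both unnecessary and the source of the apparent obstruction. A similar remark applies to the determinant computation in your ``moreover'' step, where you again case-split on the shape of $\mbf{a}$ rather than just exhibiting a $\ge 2$ subdeterminant in the $\mbf{a}'$-block (which exists automatically because that block's matroid is not in $\mcf{M}_1$).
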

\begin{proof}
Let $\mbf{A}'$ be obtained from $\mbf{A}$ by deleting a row in $\supp(\mbf{a})$.
Let $\mbf{a}'$ be the vector obtained from $\mbf{a}$ by deleting this row.
Set $k := |\supp(\mbf{a}')|$ and $n := |\supp(\mbf{b}')|$.
By scaling columns by $-1$, we assume that $\mbf{A}'$ has $\mbf{I}_r$ and $\mbf{D}_r$ as submatrices.
Note that, up to permuting rows and columns, $\mbf{A}'$ has a submatrix of the following form:
\[
\mbf{A}'' := 
\left[ \begin{array}{c|c|c|c|c|c}
\mbf{I}_k & \mbf{D}_k & \mbf{a}' & \mbf{0} & \mbf{0} & *\\
\cline{1-6}
\mbf{0} & \mbf{0} & \mbf{0} & \mbf{I}_n & \mbf{D}_n & \mbf{b}'
\end{array}\right].
\]

Every rank-$k$ matroid in $\mathcal M_1$ has at most $\binom{k+1}{2}$ points by \Cref{thm: LPSX}.
Thus, $M[[\mbf{I}_k \, \mbf{D}_k \, \mbf{a}']]$ is not in $\mathcal M_1$.
If $\mbf{b}'$ is not a scalar multiple of a unit vector or a difference of two unit vectors, then $M[[\mbf{I}_n \, \mbf{D}_n \, \mbf{b}']]$ is not in $\mathcal M_1$ by similar reasoning.
However, the vector matroid of $\mbf{A}''$ is then an $(\mathcal M_1, \max(k, n), 2)$-stack-minor of $M$, which is a contradiction.

Assume further that $\mbf{A}$ is $3$-modular.
Every square submatrix of $\mbf{A}'$ has absolute determinant at most $3$ because $\mbf{A}'$ has $\mbf{I}_r$ as a submatrix.
The vector matroid of $[\mbf{I}_k \, \mbf{D}_k \, \mbf{a}']$ is not in $\mathcal M_1$, so this matrix has a submatrix $\mbf{A}_1$ with absolute determinant at least $2$.
If $\mbf{b}'$ is not a unit vector, a negative unit vector, or a difference of two unit vectors, then $[\mbf{I}_n \, \mbf{D}_n \, \mbf{b'}]$ has a square submatrix $\mbf{A}_2$ with $|\det (\mbf{A}_2)| \ge 2$.
However, then $\mbf{A}'$ has a square submatrix of the form 
\[
\left[ \begin{array}{c|c}
\mbf{A}_1 & * \\
\cline{1-2}
\mbf{0} & \mbf{A}_2 
\end{array}\right]
\]
with absolute determinant at least $4$, which is a contradiction.
\end{proof}

The following lemma will help us analyze $2 \times 2$ submatrices of $[\mbf{D}_{r+1} \, \mbf{a} \, \mbf{b}]$.

\begin{lemma} \label{lem: perturb matrix}
Set 
\[
\mbf{A} := 
\begin{bNiceArray}{@{\hskip .05cm}ccc|ccc|cc@{\hskip .05cm}}[margin]
\Block[c]{4-3}{\mbf{I}_4} &&&\Block[c]{4-3}{\mbf{D}_4} &&& a_1 & a_2 \\
&&&&&& a_3 & a_4 \\
\cline{7-8}
&&&&&& x & 0 \\ 
&&&&&& 0 & y 
\end{bNiceArray}
\]
with $a_1,a_2,a_3,a_4,x,y \in \mbb{R}$ and $x,y\ne 0$. 
Let $s_1, s_3 \in \{0, x\}$ be such that $s_1$ and $s_3$ are not both $x$.
Let $s_2, s_4 \in \{0,y\}$ be such that $s_2$ and $s_4$ are not both $y$.
Set
\[
\mbf{B} := \begin{bmatrix}
a_1 + s_1 & a_2 + s_2 \\
a_3 + s_3 & a_4 + s_4
\end{bmatrix}.
\]
Then $|\det(\mbf{B})|$ is at most the maximum absolute value of the determinant of a square submatrix of $\mbf{A}$.
\end{lemma}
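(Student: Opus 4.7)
My plan is to exhibit, for each valid choice of $(s_1, s_2, s_3, s_4)$, a $4 \times 4$ submatrix $\mbf{M}$ of $\mbf{A}$ whose determinant has absolute value equal to $|\det(\mbf{B})|$. The hypothesis that $s_1, s_3$ are not both equal to $x$ means the pair $(s_1, s_3)$ lies in $\{(0,0),(x,0),(0,x)\}$, and similarly $(s_2, s_4) \in \{(0,0),(y,0),(0,y)\}$, giving nine cases in total. The key idea is that within the $\mbf{I}_4\,\mbf{D}_4$ block of $\mbf{A}$, we have access to columns of the form $\mbf{e}_3$, $\mbf{e}_1 - \mbf{e}_3$, $\mbf{e}_2 - \mbf{e}_3$ (all of which are zero in row $4$) and $\mbf{e}_4$, $\mbf{e}_1 - \mbf{e}_4$, $\mbf{e}_2 - \mbf{e}_4$ (all of which are zero in row $3$); these are exactly the columns needed to pivot on rows $3$ and $4$ of the last two columns of $\mbf{A}$ while simultaneously depositing the prescribed shifts $s_k$ into rows $1$ and $2$.

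Concretely, I would choose $c_1$ from the $\mbf{I}_4\,\mbf{D}_4$ block as follows: if $(s_1, s_3) = (0,0)$, set $c_1 := \mbf{e}_3$; if $(s_1, s_3) = (x, 0)$, set $c_1 := \mbf{e}_1 - \mbf{e}_3$; and if $(s_1, s_3) = (0, x)$, set $c_1 := \mbf{e}_2 - \mbf{e}_3$. The choice of $c_2$ is entirely analogous in terms of $(s_2, s_4)$, using row $4$ and $y$ in place of row $3$ and $x$. Each such column appears in $\mbf{I}_4$ or $\mbf{D}_4$, and $c_1 \neq c_2$ since one has its pivot entry in row $3$ while the other has it in row $4$.

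Now let $\mbf{M}$ be the $4 \times 4$ submatrix of $\mbf{A}$ whose columns are $c_1$, $c_2$, and the two rightmost columns of $\mbf{A}$. If $c_1 = \mbf{e}_i - \mbf{e}_3$ for some $i \in \{1,2\}$, I add row $3$ to row $i$; if $c_2 = \mbf{e}_j - \mbf{e}_4$ for some $j \in \{1,2\}$, I add row $4$ to row $j$. These operations preserve $|\det(\mbf{M})|$ and have two simultaneous effects: they zero out the entries of $c_1$ and $c_2$ outside rows $3$ and $4$, and they shift the top-right $2 \times 2$ block from its initial form (with entries $a_1, a_2, a_3, a_4$) into exactly $\mbf{B}$, because adding row $3$ to row $1$ changes $a_1$ into $a_1 + x$ (matching the case $s_1 = x$), and likewise for the other entries. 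Expanding along the first two columns of the transformed matrix, which now each have a single nonzero entry of absolute value $1$ in rows $3$ and $4$, yields $|\det(\mbf{M})| = |\det(\mbf{B})|$.

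The main potential obstacle is bookkeeping across the nine combinations, but the construction is uniform and the constraints that $s_1, s_3$ are not both equal to $x$ and that $s_2, s_4$ are not both equal to $y$ are precisely what guarantees a valid pair $(c_1, c_2)$ always exists; if, say, $s_1 = s_3 = x$ were allowed, no single column of the form $\mbf{e}_i - \mbf{e}_3$ would suffice to simultaneously shift both rows.
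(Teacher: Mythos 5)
Your proposal is correct and takes essentially the same approach as the paper: both pick the identical $4\times 4$ submatrix whose two auxiliary columns are the unique columns of $[\mbf{I}_4\,\mbf{D}_4]$ with support determined by the patterns of $(s_1,s_3,x,0)$ and $(s_2,s_4,0,y)$, then show this submatrix has absolute determinant $|\det(\mbf{B})|$. The only difference is cosmetic: the paper clears rows $3,4$ of the last two columns via column operations (adding multiples of $\mbf{f}^x,\mbf{f}^y$), whereas you clear rows $1,2$ of the first two columns via row operations; both reduce to the same cofactor expansion.
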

\begin{proof}
Given that $s_1 =0$ or $s_3 =0$, there exists a unique column $\mbf{f}^x$ of $[\mbf{I}_4\ \mbf{D}_4]$ such that $3 \in \supp(\mbf{f}^x) = \supp([s_1, s_3, x,0]^\top)$.
Moreover, for some choice of $\sigma_x \in \{-x, x\}$, we have that $[a_1+s_1,a_3+ s_3, 0,0]^\top = [a_1, a_3, x,0 ]^\top  + \sigma_x \cdot \mbf{f}^x$.
Similarly, there exists a unique column $\mbf{f}^y$ of $[\mbf{I}_4\ \mbf{D}_4]$ and some choice of $\sigma_y \in \{-y, y\}$ such that $[a_2+s_2,a_4+ s_4, 0,y]^\top = [a_2, a_4, 0,0 ]^\top + \sigma_y \cdot \mbf{f}^y $.
Hence, after applying column operations, we see that
\[
\left|\det\left(
\begin{bNiceArray}{@{\hskip .05cm}cccc@{\hskip .05cm}}[margin]
\Block[c]{4-1}{\mbf{f}^x} & \Block[c]{4-1}{\mbf{f}^y} & a_1 & a_2\\
&& a_3 & a_4\\
&& x & 0\\
&  & 0 & y
\end{bNiceArray}
\right)\right| 
= 
\left| 
\det\left(
\begin{bNiceArray}{@{\hskip .05cm}cccc@{\hskip .05cm}}[margin]
\Block[c]{4-1}{\mbf{f}^x} & \Block[c]{4-1}{\mbf{f}^y} & a_1+s_1 & a_2+s_2\\
&& a_3+s_3 & a_4+s_4\\
&& 0 & 0\\
&  & 0 & 0
\end{bNiceArray}
\right)
\right|.
\]
As  $3 \in \supp(\mbf{f}^x) - \supp(\mbf{f}^y)$ and $4 \in \supp(\mbf{f}^y) -  \supp(\mbf{f}^x)$, the latter absolute determinant is equal to $|\det(\mbf{B})|$.
\end{proof}

We can now characterize all pairs of vectors $\{\mbf{a}, \mbf{b}\}$ for which $[\mbf{D}_{r+1} \, \mbf{a} \, \mbf{b}]$ is $3$-modular and each of $\mbf{a}$ and $\mbf{b}$ has entry $1$ outside of the support of the other vector.
The reason for considering this special case is that it arises from the structured representation guaranteed by \Cref{prop: standardized representation}.

\begin{lemma} \label{lem: characterization of 2-element independent flats}
Let $\mbf{a}, \mbf{b} \in \mbb{Z}^{r-1}$ with $\mbf{a} \ne \mbf{b}$.
Suppose
\[
\mbf{A} := 
\begin{bNiceArray}{ccc|cc}[margin]
\Block{3-3}{ \mbf{D}_{r+1} }&&& \mbf{a} & \mbf{b} \\
 \cline{4-5}
  &&& 1 & 0 \\ 
   &&& 0 & 1
\end{bNiceArray}
\]
is a $3$-modular matrix with rows that sum to $\mbf{0}$ and no parallel columns.
Then $\mbf{a}$ and $\mbf{b}$ form the columns of one of the following matrices, up to permuting rows and columns and deleting all-zero rows:
\begin{equation}\label{eqLem2.9List}
\begin{array}{r}
\left[
\begin{array}{@{\hskip .05cm}rr@{\hskip .05cm}}
    -3 &  -2 \\
   2 & 1
\end{array}\right],
\left[
\begin{array}{@{\hskip .05cm}rr@{\hskip .05cm}}
    -2 & 1 \\
   1 & -2
\end{array}\right],
\left[
\begin{array}{@{\hskip .05cm}rr@{\hskip .05cm}}
    -2 &  1 \\
     1 & -1 \\
     0 & -1
\end{array}\right],
\left[
\begin{array}{@{\hskip .05cm}rr@{\hskip .05cm}}
    -2 &  -1 \\
     1 &  1 \\
     0 & -1
\end{array}\right],\\[.5cm]
\left[
\begin{array}{@{\hskip .05cm}rr@{\hskip .05cm}}
    -2 &  -1 \\
   2 &  1 \\
   -1 & -1
\end{array}\right],
\left[
\begin{array}{@{\hskip .05cm}rr@{\hskip .05cm}}
    -1 &  -1 \\
     -1 & 1 \\
    1 & -1
\end{array}\right],
\left[
\begin{array}{@{\hskip .05cm}rr@{\hskip .05cm}}
    -1 &  -1 \\
   1 & 1 \\
    -1 & 0 \\
    0 & -1
\end{array}\right],
\left[
\begin{array}{@{\hskip .05cm}rr@{\hskip .05cm}}
    -1 &  1 \\
   1 & -1 \\
    -1 & 0 \\
    0 & -1
\end{array}\right].
\end{array}
\end{equation}
\end{lemma}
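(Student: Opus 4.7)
The plan is to combine \Cref{lem: possible columns}, \Cref{lem: finding a stack}, and \Cref{lem: perturb matrix} to constrain $(\mbf{a},\mbf{b})$ and then enumerate.

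First, I would pin down each column individually. Because $\mbf{A}$ is $3$-modular and its bottom-right is an identity block, Laplace expansion along the last two rows shows that the top submatrices $[\mbf{D}_{r+1},\mbf{a}]$ and $[\mbf{D}_{r+1},\mbf{b}]$ are each $3$-modular after augmenting with the zero-sum-restoring row. Applying \Cref{lem: possible columns} to each, the augmented vectors $(\mbf{a};1)$ and $(\mbf{b};1)$ each match one of the seven listed patterns, up to row permutation. Next, I would invoke \Cref{lem: finding a stack} with the elements indexing $\mbf{a}$ and $\mbf{b}$: on $\supp(\mbf{b}) - \supp(\mbf{a})$ the restriction of $\mbf{b}$ must be a $\pm 1$-scaled unit vector or difference of two unit vectors, and symmetrically for $\mbf{a}$. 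Together, these two ingredients force the combined support of $(\mbf{a},\mbf{b})$ into a short list of shapes.

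The joint numerical constraints come from \Cref{lem: perturb matrix}. For each pair $i,j \in \supp(\mbf{a}) \cap \supp(\mbf{b})$, I would select four rows of $\mbf{A}$ containing the $[\mbf{I}_4,\mbf{D}_4]$ structure required by the lemma (available for $r$ sufficiently large, since the restriction of $\mbf{D}_{r+1}$ to four top rows contains $\mbf{D}_4$ plus enough unit columns), together with the two identity rows giving $x=y=1$. The lemma then bounds $|a_i b_j - a_j b_i|$ and each of its $\pm 1$-perturbations by the maximum sub-determinant of $\mbf{A}$, hence by $3$. Across all such pairs $(i,j)$, this yields a dense web of inequalities on the entries of $\mbf{a}$ and $\mbf{b}$ in their overlapping support.

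With these three layers of constraints in place, the proof reduces to a finite case analysis parameterized by (a) the pair of types for $(\mbf{a},\mbf{b})$ from step one, (b) the compatible overlap patterns from step two, and (c) the numerical entry assignments surviving the $2 \times 2$ bounds from step three. After identifying configurations modulo the stated symmetries (permuting rows, swapping $\mbf{a}$ with $\mbf{b}$, deleting all-zero rows), the eight matrices in \eqref{eqLem2.9List} are exactly those that remain. The main obstacle I expect is the enumeration itself: the $7\times 7$ type combinations crossed with several overlap patterns produce many sub-cases, each of which must be checked against every pairwise $2\times 2$ constraint. Systematic exploitation of symmetry — including the freedom to relabel the vertices of the underlying clique by permuting rows of $\mbf{D}_{r+1}$ — will be essential to keep the bookkeeping tractable and to verify that the list of eight is both complete and non-redundant.
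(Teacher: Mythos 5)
Your outline mirrors the paper's strategy: constrain each column individually with \Cref{lem: possible columns}, constrain the supports' symmetric difference with \Cref{lem: finding a stack}, then use \Cref{lem: perturb matrix} to bound perturbed $2\times2$ subdeterminants and enumerate. However, there is a genuine gap: the $2\times2$ constraints from \Cref{lem: perturb matrix} alone are \emph{not} sufficient to eliminate all the spurious candidates, and your proposal provides no other numerical filter. For instance, in the branch where $\underline{\mathbf{a}}$ is a permutation of $[-1,-1,-1,1,1]^\top$, the candidate
\[
[\mathbf{a}\,\mathbf{b}] = \left[\begin{array}{rr} -1&-1\\ -1&1\\ -1&1\\ 1&-1\\ 1&-1 \end{array}\right]
\]
has every $2\times2$ subdeterminant equal to $0$ or $\pm2$, and every allowable perturbation from \Cref{lem: perturb matrix} (with $x=y=1$ coming from the appended identity rows) stays at or below $3$ in absolute value, so the $2\times2$ web does not rule it out. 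The paper handles this and several similar cases by exhibiting a $4\times4$ subdeterminant of $\mathbf{A}$ of the shape
\[
\left|\det\begin{bmatrix} 1&0&a&b\\ -1&0&c&d\\ 0&1&e&0\\ 0&-1&0&1\end{bmatrix}\right| = |(a+c)-(b+d)e|,
\]
which evaluates to $4$ for the relevant parameter choices (e.g.\ $(a,b,c,d,e)=(-1,1,-1,1,1)$ above). Without incorporating a step of this form, the enumeration you describe would terminate with more than the eight matrices of \eqref{eqLem2.9List}. A secondary, smaller omission is that you state \Cref{lem: finding a stack} gives that $\mathbf{b}$ restricted to $\supp(\mathbf{b})-\supp(\mathbf{a})$ is a $\pm1$-scaled unit or difference of units, but the argument also needs the sharper consequence (using the forced $1$ entries in the appended rows) that the symmetric support difference has size at most one with entry exactly $-1$; you would likely derive this in carrying out the analysis, but it should be made explicit since it drastically prunes the case tree.
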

\begin{proof}
Recall that for a vector $\mbf{x}$, we write $\ub{x}$ for $\mbf{x}[\supp(\mbf{x})]$.
\Cref{lem: possible columns} characterizes (up to row permutation and multiplication of $-1$) the nonzero entries in the last two columns of $\mbf{A}$.
As these columns each have an entry of $1$, \Cref{lem: possible columns} also characterizes $\underline{\mbf{a}}$ and $\underline{\mbf{b}}$.
More precisely, $\underline{\mbf{a}}$ and $\underline{\mbf{b}}$ are each one of the following vectors, up to permuting rows:
\begin{equation}\label{eq2.5.1}
\left[
\begin{array}{@{\hskip .05cm}r@{\hskip .05cm}}
    -3\\
    2
\end{array}\right],
\left[
\begin{array}{@{\hskip .05cm}r@{\hskip .05cm}}
    -2\\
    1
\end{array}\right],
\left[
\begin{array}{@{\hskip .05cm}r@{\hskip .05cm}}
    -3\\
    1\\
    1
\end{array}\right],
\left[
\begin{array}{@{\hskip .05cm}r@{\hskip .05cm}}
    -2\\
    2\\
     -1
\end{array}\right],
\left[
\begin{array}{@{\hskip .05cm}r@{\hskip .05cm}}
    -1\\
    -1\\
    1\\
\end{array}\right],
\left[
\begin{array}{@{\hskip .05cm}r@{\hskip .05cm}}
    -2\\
    -1\\
    1\\
    1
\end{array}\right],
\left[
\begin{array}{@{\hskip .05cm}r@{\hskip .05cm}}
    2\\
    -1\\
    -1\\
    -1
\end{array}\right],
\left[
\begin{array}{@{\hskip .05cm}r@{\hskip .05cm}}
    -1\\
    -1\\
    -1\\
    1\\
    1
\end{array}\right].
\end{equation}

We now state two important properties about how $\mbf{a}$ and $\mbf{b}$ interact.
The entry of $1$ in the last two columns of $\mbf{A}$ can be used together with \Cref{lem: finding a stack} to imply the following:
\begin{enumerate}[label=$(\roman*)$, leftmargin = *]
\item\label{lem2.9.1} $|\supp(\mbf{a}) - \supp(\mbf{b})| \le 1$ and if equality holds, then the entry of $\mbf{a}$ in row $\supp(\mbf{a}) - \supp(\mbf{b})$ is $-1$.

\item\label{lem2.9.2} $|\supp(\mbf{b}) - \supp(\mbf{a})| \le 1$ and if equality holds, then the entry of $\mbf{b}$ in the row in $\supp(\mbf{b}) - \supp(\mbf{a})$ is $-1$.
\end{enumerate}

For the remainder of the proof, we analyze cases of $\mbf{A}$ by examining the choices of $\underline{\mbf{a}}$ and $\underline{\mbf{b}}$ as outlined in \eqref{eq2.5.1}, keeping in mind that they satisfy \ref{lem2.9.1} and \ref{lem2.9.2}.
In what follows, we use the fact that every submatrix of $\mbf{A}$ can be extended to a full-rank submatrix of $\mbf{A}$ by appending unit columns, so every submatrix of $\mbf{A}$ must have absolute determinant at most $3$.

If $\ub{a} = [-1,-1,-1,1,1]^\top$, then the matrices below are all possibilities for $[\mbf{a}\ \mbf{b}]$, omitting all-zero rows and satisfying \ref{lem2.9.1} and \ref{lem2.9.2}:
\[
\begin{array}{@{\hskip -.25cm}c@{\hskip 0cm}c@{\hskip 0cm}c@{\hskip 0cm}c@{\hskip 0cm}c@{\hskip 0cm}c@{\hskip 0cm}c}
\begin{bNiceArray}{@{\hskip .05cm}rr@{\hskip .05cm}}[margin]
\CodeBefore
\cellcolor{black!10}{4-2,5-1}
\cellcolor{black!25}{4-1,5-2}
\Body
   -1 &  -1 \\
     -1 & -1 \\
     -1 & 1 \\
     1 & -1 \\
     1 & 1
\end{bNiceArray}
&
\left[
\begin{array}{@{\hskip .05cm}rr@{\hskip .05cm}}
    -1 &  -1 \\
     -1 & 1 \\
     -1 & 1 \\
     1 & -1 \\
     1 & -1
\end{array}\right]
&
\begin{bNiceArray}{@{\hskip .05cm}rr@{\hskip .05cm}}[margin]
\CodeBefore
\cellcolor{black!10}{4-1,4-2,5-1}
\cellcolor{black!25}{5-2}
\Body
    -1 &  0 \\
     -1 & -1 \\
     -1 & -1 \\
     1 & -1 \\
     1 & 2
\end{bNiceArray}
&
\begin{bNiceArray}{@{\hskip .05cm}rr@{\hskip .05cm}}[margin]
\CodeBefore
\cellcolor{black!10}{2-1, 3-1, 3-2}
\cellcolor{black!25}{2-2}
\Body
    -1 &  0 \\
     -1 & 2 \\
     -1 & -1 \\
     1 & -1 \\
     1 & -1  
\end{bNiceArray}
&
\left[
\begin{array}{@{\hskip .05cm}rr@{\hskip .05cm}}
    -1 &  0 \\
     -1 & -1 \\
     -1 & -1 \\
     1 & 1 \\
     1 & 1 \\
     0 &  -1 
\end{array}\right]
&
\left[
\begin{array}{@{\hskip .05cm}rr@{\hskip .05cm}}
    -1 &  0 \\
     -1 & 1 \\
     -1 & 1 \\
     1 & -1 \\
     1 & -1 \\
     0 &  -1 \\
\end{array}\right]\\
\begin{bNiceArray}{@{\hskip .05cm}rr@{\hskip .05cm}}[margin]
\CodeBefore
\cellcolor{black!10}{2-1,2-2}
\cellcolor{black!25}{4-1,4-2}
\Body
    -1 &  0 \\
     -1 & 1 \\
     -1 & -1 \\
     1 & 1 \\
     1 & -1 \\
     0 &  -1  
\end{bNiceArray}
&
\begin{bNiceArray}{@{\hskip .05cm}rr@{\hskip .05cm}}[margin]
\CodeBefore
\cellcolor{black!10}{4-1,4-2}
\cellcolor{black!25}{5-1,5-2}
\Body
    -1 &  0 \\
     -1 & 1 \\
     -1 & 1 \\
     1 & -2 \\
     1 & -1
\end{bNiceArray}
&
\begin{bNiceArray}{@{\hskip .05cm}rr@{\hskip .05cm}}[margin]
\CodeBefore
\cellcolor{black!10}{3-1,3-2}
\cellcolor{black!25}{4-1,4-2}
\Body
    -1 &  0 \\
     -1 & -2 \\
     -1 & 1 \\
     1 & 1 \\
     1 & -1
\end{bNiceArray}
&
\begin{bNiceArray}{@{\hskip .05cm}rr@{\hskip .05cm}}[margin]
\CodeBefore
\cellcolor{black!10}{4-1,4-2,5-1}
\cellcolor{black!25}{5-2}
\Body
    -1 &  0 \\
     -1 & -1 \\
     -1 & 1 \\
     1 & -2 \\
     1 & 1   
\end{bNiceArray}
&
\begin{bNiceArray}{@{\hskip .05cm}rr@{\hskip .05cm}}[margin]
\CodeBefore
%\cellcolor{black!10}{2-1,2-2}
%\cellcolor{black!25}{4-1,4-2}
\Body
    -1 &  0 \\
     -1 & -2 \\
     -1 & -1 \\
     1 & 1 \\
     1 & 1   
\end{bNiceArray}
\end{array} 
\]
Each of these matrices imply that $\mbf{A}$ has a full-rank absolute subdeterminant of at least $4$, which is a contradiction.
Some matrices have a $2\times2$ gray submatrix; for these, we apply \Cref{lem: perturb matrix} to reach a contradiction.
The light gray corresponds to adding $s_i = 0$ in \Cref{lem: perturb matrix}, and the dark gray corresponds to adding $s_i = 1$ in \Cref{lem: perturb matrix}.
It can be checked that after these additions, the $2\times2$ submatrix has an absolute determinant of $4$.
We use this gray-highlighting scheme throughout the proof.

Some matrices do not have a $2\times 2$ gray submatrix.
For the second and sixth matrices in row $1$ above, $\mbf{A}$ has a subdeterminant of the form 
% %
\begin{align}\label{eqCorner}
&\left| \det \left(\begin{bNiceArray}{@{\hskip .05cm}rrrr@{\hskip .05cm}}[margin]
\CodeBefore
\Body
1 & 0 & a & b\\
-1 & 0 & c & d\\
0 & 1 & e &0\\
0 & -1 & 0 & 1
\end{bNiceArray}\right)\right|  = |(a+c)-(b+d)e|,
\end{align}
which is at least $4$ for $(a,b,c,d,e) = (-1,1,-1,1,1)$.
For the fifth matrix in rows $1$ and $2$ above, $\mbf{A}$ has a subdeterminant of the form \eqref{eqCorner} with $(a,b,c,d,e) = (-1,-1,-1,-1,-1)$ and $(a,b,c,d,e) = (-1,-2,-1,-1,-1)$, respectively.
%
% &\left|\det \left(\begin{bNiceArray}{@{\hskip .05cm}rrrr@{\hskip .05cm}}[margin]
% \CodeBefore
% \Body
%     0 & 1 & -1 & -1\\
%     0 & -1 & -1 & -1\\
%     1 & 0 & 1 & 0\\
%     -1 & 0 & 0 & -1
% \end{bNiceArray}\right)\right| = 4.\nonumber
% \end{align}
%  %
Hence, neither $\ub{a}$ nor $\ub{b}$ is $[-1,-1,-1,1,1]^\top$ up to permuting rows.

If $\ub{a} = [-1,-1,-1,2]^\top$, then the matrices below are all possibilities for $[\mbf{a}\ \mbf{b}]$, omitting all-zero rows and satisfying \ref{lem2.9.1} and \ref{lem2.9.2}:
\[
\begin{array}{@{\hskip -.25cm}c@{\hskip 0cm}c@{\hskip 0cm}c@{\hskip 0cm}c@{\hskip 0cm}c@{\hskip 0cm}c@{\hskip 0cm}c}
\begin{bNiceArray}{@{\hskip .05cm}rr@{\hskip .05cm}}[margin]
\CodeBefore
\cellcolor{black!10}{2-2,4-1,4-2}
\cellcolor{black!25}{2-1}
\Body
    -1 &  0 \\
     -1 & -2 \\
     -1 & -1 \\
     2 & 2   
\end{bNiceArray}
&
\begin{bNiceArray}{@{\hskip .05cm}rr@{\hskip .05cm}}[margin]
\CodeBefore
\cellcolor{black!10}{3-2,4-1,4-2}
\cellcolor{black!25}{3-1}
\Body
    -1 &  0 \\
     -1 & -1 \\
     -1 & 2 \\
     2 & -2 
\end{bNiceArray}
&
\begin{bNiceArray}{@{\hskip .05cm}rr@{\hskip .05cm}}[margin]
\CodeBefore
\cellcolor{black!10}{3-2,4-1,4-2}
\cellcolor{black!25}{3-1}
\Body
    -1 &  0 \\
     -1 & -2 \\
     -1 & 2 \\
     2 & -1   
\end{bNiceArray}
&
\begin{bNiceArray}{@{\hskip .05cm}rr@{\hskip .05cm}}[margin]
\CodeBefore
\cellcolor{black!10}{1-1,4-2}
\cellcolor{black!25}{1-2,4-1}
\Body
    -1 &  0 \\
     -1 & -1 \\
     -1 & -1 \\
     2 & 1  
\end{bNiceArray}
&
\begin{bNiceArray}{@{\hskip .05cm}rr@{\hskip .05cm}}[margin]
\CodeBefore
\cellcolor{black!10}{4-1,4-2}
\cellcolor{black!25}{3-1,3-2}
\Body
    -1 &  0 \\
     -1 & -1 \\
     -1 & 1 \\
     2 & -1    
\end{bNiceArray}\\[.75cm]
\begin{bNiceArray}{@{\hskip .05cm}rr@{\hskip .05cm}}[margin]
\CodeBefore
\cellcolor{black!10}{3-2,4-1,4-2}
\cellcolor{black!25}{3-1,3-2}
\Body
    -1 &  0 \\
     -1 & 1 \\
     -1 & 1 \\
     2 & -3  
\end{bNiceArray}
&
\begin{bNiceArray}{@{\hskip .05cm}rr@{\hskip .05cm}}[margin]
\CodeBefore
\cellcolor{black!10}{3-1,3-2,4-1,4-2}
\Body
    -1 &  0 \\
     -1 & 1 \\
     -1 & -3 \\
     2 & 1 
\end{bNiceArray}
&
\begin{bNiceArray}{@{\hskip .05cm}rr@{\hskip .05cm}}[margin]
\CodeBefore
\cellcolor{black!10}{3-2,4-1,4-2}
\cellcolor{black!25}{3-1}
\Body
    -1 &  -1 \\
     -1 & -1 \\
     -1 & 2 \\
     2 & -1
\end{bNiceArray}
&
\begin{bNiceArray}{@{\hskip .05cm}rr@{\hskip .05cm}}[margin]
\CodeBefore
\cellcolor{black!10}{1-1,5-1,5-2}
\cellcolor{black!25}{1-2}
\Body
    -1 &  0 \\
     0 & -1 \\
     -1 & -1 \\
     -1 & -1 \\
     2 & 2  
\end{bNiceArray}
&
\begin{bNiceArray}{@{\hskip .05cm}rr@{\hskip .05cm}}[margin]
\CodeBefore
\cellcolor{black!10}{4-2,5-1,5-2}
\cellcolor{black!25}{4-1}
\Body
    -1 &  0 \\
     0 & -1 \\
     -1 & -1 \\
     -1 & 2 \\
     2 & -1 
\end{bNiceArray}\\[.75cm]
\begin{bNiceArray}{@{\hskip .05cm}rr@{\hskip .05cm}}[margin]
\CodeBefore
\cellcolor{black!10}{4-1,4-2}
\cellcolor{black!25}{2-1,2-2}
\Body
    -1 &  1 \\
     -1 & 1 \\
     -1 & -2 \\
     2 & -1
\end{bNiceArray}
&
\begin{bNiceArray}{@{\hskip .05cm}rr@{\hskip .05cm}}[margin]
\CodeBefore
\cellcolor{black!10}{3-2,4-1,4-2}
\cellcolor{black!25}{3-1}
\Body
    -1 &  1 \\
     -1 & -1\\
     -1 & -2 \\
     2 & 1 
\end{bNiceArray}
&
\begin{bNiceArray}{@{\hskip .05cm}rr@{\hskip .05cm}}[margin]
\CodeBefore
\cellcolor{black!10}{3-1,3-2,4-1,4-2}
\Body
    -1 &  1\\
     -1 & 1 \\
     -1 & -1 \\
     2 & -2
\end{bNiceArray}
&
\begin{bNiceArray}{@{\hskip .05cm}rr@{\hskip .05cm}}[margin]
\CodeBefore
\cellcolor{black!10}{5-1,5-2}
\cellcolor{black!25}{4-1,4-2}
\Body
    -1 &  0\\
    0 & -1\\
     -1 & 1 \\
     -1 & 1 \\
     2 & -2
\end{bNiceArray}
&
\begin{bNiceArray}{@{\hskip .05cm}rr@{\hskip .05cm}}[margin]
\CodeBefore
\cellcolor{black!10}{4-2,5-1,5-2}
\cellcolor{black!25}{4-1}
\Body
    -1 &  0\\
    0 & -1\\
     -1 & 1 \\
     -1 & -2 \\
     2 & 1
\end{bNiceArray}.
\end{array}
\]
Hence, neither $\ub{a}$ nor $\ub{b}$ is $[-1,-1,-1,2]^\top$ up to permuting rows.

If $\ub{a} = [1,1,-2,-1]^\top$, then the matrices below are all possibilities with columns $\underline{\mbf{a}}$ and $\underline{\mbf{b}}$ that satisfy \ref{lem2.9.1} and \ref{lem2.9.2}:

\[
\begin{array}{@{\hskip -.25cm}c@{\hskip 0cm}c@{\hskip 0cm}c@{\hskip 0cm}c@{\hskip 0cm}c@{\hskip 0cm}c@{\hskip 0cm}c}
\begin{bNiceArray}{@{\hskip .05cm}rr@{\hskip .05cm}}[margin]
\CodeBefore
\cellcolor{black!10}{3-1,3-2}
\cellcolor{black!25}{2-1,2-2}
\Body
    1 &  -2 \\
    1 & -1 \\
    -2 & 2 \\
    -1 & 0
\end{bNiceArray}
&
\begin{bNiceArray}{@{\hskip .05cm}rr@{\hskip .05cm}}[margin]
\CodeBefore
\cellcolor{black!10}{1-1,1-2,2-1,2-2}
\Body
    1 &  -2 \\
    1 & 2 \\
    -2 & -1 \\
    -1 & 0
\end{bNiceArray}
&
\begin{bNiceArray}{@{\hskip .05cm}rr@{\hskip .05cm}}[margin]
\CodeBefore
\cellcolor{black!10}{2-1,2-2}
\cellcolor{black!25}{1-1,1-2}
\Body
    1 &  -1 \\
    1 & 2\\
    -2 & -2 \\
    -1 & 0
\end{bNiceArray}
&
\begin{bNiceArray}{@{\hskip .05cm}rr@{\hskip .05cm}}[margin]
\CodeBefore
\Body
    1 &  -1 \\
    1 & -1 \\
    -2 & 1 \\
    -1 & 0
\end{bNiceArray}
&
\begin{bNiceArray}{@{\hskip .05cm}rr@{\hskip .05cm}}[margin]
\CodeBefore
\cellcolor{black!10}{2-1,3-1,3-2}
\cellcolor{black!25}{2-2,3-2}
\Body
    1 &  -1 \\
    1 & 1 \\
    -2 & -1 \\
    -1 & 0
\end{bNiceArray}
&
\begin{bNiceArray}{@{\hskip .05cm}rr@{\hskip .05cm}}[margin]
\CodeBefore
\cellcolor{black!10}{1-1,1-2,2-1,2-2}
\Body
    1 &  -3 \\
    1 & 1 \\
    -2 & 1 \\
    -1 & 0
\end{bNiceArray}\\[.75cm]
\begin{bNiceArray}{@{\hskip .05cm}rr@{\hskip .05cm}}[margin]
\CodeBefore
\cellcolor{black!10}{2-2,3-1,3-2}
\cellcolor{black!25}{2-1}
\Body
    1 &  1 \\
     1 & 1\\
     -2 & -3\\
     -1 & 0
\end{bNiceArray}
&
\begin{bNiceArray}{@{\hskip .05cm}rr@{\hskip .05cm}}[margin]
\CodeBefore
\cellcolor{black!10}{3-1,3-2,4-1,4-2}
\Body
    1 &  -1 \\
     1 & 1 \\
     -2 & -2 \\
     -1 & 1
\end{bNiceArray}
&
\begin{bNiceArray}{@{\hskip .05cm}rr@{\hskip .05cm}}[margin]
\CodeBefore
\cellcolor{black!10}{3-1,3-2,4-2}
\cellcolor{black!25}{4-1}
\Body
    1 &  -1 \\
     1 & 1 \\
     -2 & 1 \\
     -1 & -2
\end{bNiceArray}
&
\begin{bNiceArray}{@{\hskip .05cm}rr@{\hskip .05cm}}[margin]
\CodeBefore
\cellcolor{black!10}{3-1,3-2,4-2}
\cellcolor{black!25}{4-1}
\Body
    1 &  1 \\
     1 & 1 \\
     -2 & -1 \\
     -1 & -2
\end{bNiceArray}
&
\begin{bNiceArray}{@{\hskip .05cm}rr@{\hskip .05cm}}[margin]
\CodeBefore
\cellcolor{black!10}{1-1,1-2,2-1}
\cellcolor{black!25}{2-2}
\Body
    1 &  -2 \\
    1 & 1 \\
    -2 & 1 \\
    -1 & -1
\end{bNiceArray}
&
\begin{bNiceArray}{@{\hskip .05cm}rr@{\hskip .05cm}}[margin]
\CodeBefore
\cellcolor{black!10}{1-1,1-2,2-1}
\cellcolor{black!25}{2-2}
\Body
    1 &  -2 \\
     1 & 1\\
     -2 & -1\\
     -1 & 1
\end{bNiceArray}\\[.75cm]
\begin{bNiceArray}{@{\hskip .05cm}rr@{\hskip .05cm}}[margin]
\CodeBefore
\cellcolor{black!10}{3-1,3-2}
\cellcolor{black!25}{4-1,4-2}
\Body
    1 &  -2 \\
     1 & -1 \\
     -2 & 1 \\
     -1 & 1
\end{bNiceArray}
&
\begin{bNiceArray}{@{\hskip .05cm}rr@{\hskip .05cm}}[margin]
\CodeBefore
\cellcolor{black!10}{3-1,3-2,4-1}
\cellcolor{black!25}{4-2}
\Body
    1 &  1 \\
     1 & 1 \\
     -2 & -2 \\
     -1 & 0\\
     0 & -1
\end{bNiceArray}
&
\begin{bNiceArray}{@{\hskip .05cm}rr@{\hskip .05cm}}[margin]
\CodeBefore
\cellcolor{black!10}{1-2,2-1,2-2}
\cellcolor{black!25}{1-1}
\Body
    1 &  1 \\
     1 & -2 \\
     -2 & 1 \\
     -1 & 0\\
     0 & -1
\end{bNiceArray}.
\end{array}
\]
For the fourth matrix in row $1$ above, we have a subdeterminant of the form \eqref{eqCorner} with $(a,b,c,d,e) = (-2,1,-1,0,1)$.
Hence, neither $\ub{a}$ nor $\ub{b}$ is $[1,1,-2,-1]^\top$ up to permuting rows.

If $\ub{a} = [-3,1,1]^\top$, then the matrices below are all possibilities for $[\mbf{a}\ \mbf{b}]$, omitting all-zero rows and satisfying \ref{lem2.9.1} and \ref{lem2.9.2}:
\[
\begin{array}{@{\hskip -.25cm}c@{\hskip 0cm}c@{\hskip 0cm}c@{\hskip 0cm}c@{\hskip 0cm}c@{\hskip 0cm}c@{\hskip 0cm}c}
\begin{bNiceArray}{@{\hskip .05cm}rr@{\hskip .05cm}}[margin]
\CodeBefore
\cellcolor{black!10}{1-1,1-2,3-1,3-2}
\Body
    -3 &  -2 \\
     1 & -1 \\
     1 & 2
\end{bNiceArray}
&
\begin{bNiceArray}{@{\hskip .05cm}rr@{\hskip .05cm}}[margin]
\CodeBefore
\cellcolor{black!10}{1-1,1-2,3-1,3-2}
\Body
    -3 &  2 \\
     1 & -1 \\
     1 & -2
\end{bNiceArray}
&
\begin{bNiceArray}{@{\hskip .05cm}rr@{\hskip .05cm}}[margin]
\CodeBefore
\cellcolor{black!10}{1-1,1-2,2-1,2-2}
\Body
    -3 &  -1 \\
     1 & 2 \\
     1 & -2
\end{bNiceArray}
&
\begin{bNiceArray}{@{\hskip .05cm}rr@{\hskip .05cm}}[margin]
\CodeBefore
\cellcolor{black!10}{1-1,1-2,2-1,2-2}
\Body
    -3 &  -1 \\
     1 & -1 \\
     1 & 1
\end{bNiceArray}
&
\begin{bNiceArray}{@{\hskip .05cm}rr@{\hskip .05cm}}[margin]
    -3 &  1 \\
     1 & -1 \\
     1 & -1
\end{bNiceArray}
&
\begin{bNiceArray}{@{\hskip .05cm}rr@{\hskip .05cm}}[margin]
\CodeBefore
\cellcolor{black!10}{1-1,1-2,2-1,2-2}
\Body
    -3 &  1 \\
     1 & -3 \\
     1 & 1
\end{bNiceArray}.
\end{array}
\]
For the fifth matrix in row $1$ above, we have a subdeterminant of the form \eqref{eqCorner} with $(a,b,c,d,e) = (1,-1,1,-1,1)$.
Hence, neither $\ub{a}$ nor $\ub{b}$ is $[-3,1,1]^\top$ up to permuting rows.

Next, suppose that $\ub{a} = [-2, -1,2]^\top$.
The matrices below are all possibilities for $[\mbf{a}\ \mbf{b}]$, omitting all-zero rows and satisfying \ref{lem2.9.1} and \ref{lem2.9.2}: 
\[
\begin{array}{@{\hskip -.25cm}c@{\hskip 0cm}c@{\hskip 0cm}c@{\hskip 0cm}c@{\hskip 0cm}c@{\hskip 0cm}c@{\hskip 0cm}c@{\hskip 0cm}c}
\begin{bNiceArray}{@{\hskip .05cm}rr@{\hskip .05cm}}[margin]
\CodeBefore
\cellcolor{black!10}{3-1,3-2,2-2}
\cellcolor{black!25}{2-1}
\Body
     -2 & -1 \\
     -1 & -2 \\
      2 &  2
\end{bNiceArray}
&
\begin{bNiceArray}{@{\hskip .05cm}rr@{\hskip .05cm}}[margin]
\CodeBefore
\cellcolor{black!10}{1-1,1-2,2-2}
\cellcolor{black!25}{2-1}
\Body
     -2 & -1 \\
     -1 & 2 \\
     2 &  -2
\end{bNiceArray}
&
\begin{bNiceArray}{@{\hskip .05cm}rr@{\hskip .05cm}}[margin]
\CodeBefore
\cellcolor{black!10}{3-1,3-2,2-1,2-2}
\Body
   -2 & 2 \\
    -1 & -1 \\
    2 & -2 
\end{bNiceArray}
&
\begin{bNiceArray}{@{\hskip .05cm}rr@{\hskip .05cm}}[margin]
\CodeBefore
\cellcolor{black!10}{1-1,1-2,2-2}
\cellcolor{black!25}{2-1}
\Body
   -2 & 2 \\
    -1 & -2\\
    2 & -1 
\end{bNiceArray}
&
\begin{bNiceArray}{@{\hskip .05cm}rr@{\hskip .05cm}}[margin]
\CodeBefore
\cellcolor{black!10}{1-1,1-2,2-2}
\cellcolor{black!25}{2-1}
\Body
   -2 & -2 \\
    -1 & 2 \\
    2 & -1 
\end{bNiceArray}
&
\begin{bNiceArray}{@{\hskip .05cm}rr@{\hskip .05cm}}[margin]
\CodeBefore
\cellcolor{black!10}{3-1,3-2,2-1}
\cellcolor{black!25}{2-2}
\Body
   -2 & -2 \\
    -1 & 0 \\
    2 &  2 \\
    0 & -1
\end{bNiceArray}
&
\begin{bNiceArray}{@{\hskip .05cm}rr@{\hskip .05cm}}[margin]
\CodeBefore
\cellcolor{black!10}{3-2,1-1}
\cellcolor{black!25}{3-1,1-2}
\Body
   -2 &  2 \\
    -1 & 0 \\
    2 &  -2 \\
    0 & -1
\end{bNiceArray}\\[.65 cm]
\begin{bNiceArray}{@{\hskip .05cm}rr@{\hskip .05cm}}[margin]
\CodeBefore
\cellcolor{black!10}{1-1,1-2,2-1}
\cellcolor{black!25}{2-2}
\Body
     -2 & -2 \\
     -1 & 0\\
    2 &  1 
\end{bNiceArray}
&
\begin{bNiceArray}{@{\hskip .05cm}rr@{\hskip .05cm}}[margin]
\CodeBefore
\Body
     -2 & 1 \\
     -1 & 0 \\
    2 &  -2 
\end{bNiceArray}
&
\begin{bNiceArray}{@{\hskip .05cm}rr@{\hskip .05cm}}[margin]
\CodeBefore
\cellcolor{black!10}{3-1,3-2,1-2}
\cellcolor{black!25}{1-1}
\Body
     -2 & -3 \\
     -1 & 0 \\
    2 &  2
\end{bNiceArray}
&
\begin{bNiceArray}{@{\hskip .05cm}rr@{\hskip .05cm}}[margin]
\CodeBefore
\Body
     -2 & 2 \\
     -1 & 0\\
    2 &  -3 
\end{bNiceArray}
&
\begin{bNiceArray}{@{\hskip .05cm}rr@{\hskip .05cm}}[margin]
\CodeBefore
\Body
   -2 & -1 \\
    -1 & -1 \\
    2 & 1 \\
\end{bNiceArray}
&
\begin{bNiceArray}{@{\hskip .05cm}rr@{\hskip .05cm}}[margin]
\CodeBefore
\cellcolor{black!10}{1-1,2-1,2-2}
\cellcolor{black!25}{1-2}
\Body
   -2 & 1 \\
    -1 & -1\\
    2 & -1 \\
\end{bNiceArray}\\[.75cm]
\begin{bNiceArray}{@{\hskip .05cm}rr@{\hskip .05cm}}[margin]
\CodeBefore
\cellcolor{black!10}{1-1,1-2}
\cellcolor{black!25}{2-1,2-2}
\Body
   -2 & -1 \\
    -1 & 1\\
    2 & -1 \\
\end{bNiceArray}
&
\begin{bNiceArray}{@{\hskip .05cm}rr@{\hskip .05cm}}[margin]
\CodeBefore
\cellcolor{black!10}{3-2,2-1}
\cellcolor{black!25}{3-1,2-2}
\Body
   -2 & -1 \\
    -1 & 0 \\
    2 &  1 \\
    0 & -1
\end{bNiceArray}
&
\begin{bNiceArray}{@{\hskip .05cm}rr@{\hskip .05cm}}[margin]
\CodeBefore
\cellcolor{black!10}{3-2,1-1}
\cellcolor{black!25}{3-1,1-2}
\Body
   -2 &  1 \\
    -1 & 0 \\
    2 &  -1 \\
    0 & -1
\end{bNiceArray}.
\end{array}
\]
The fifth matrix in row $2$ appears in \eqref{eqLem2.9List}.
For the second matrix in row $2$ above, we have a subdeterminant of the form \eqref{eqCorner} with $(a,b,c,d,e) = (-2,1,-1,0,1)$.
For the fourth matrix in row $2$ above, we have a subdeterminant of the form \eqref{eqCorner} with $(a,b,c,d,e) = (-2,2,-1,0,1)$.
Hence, we assume that neither $\ub{a}$ nor $\ub{b}$ is $[2,-2,-1]^\top$ up to permuting rows.

If $\ub{a} = [2,-3]^\top$, then the matrices below are all possibilities for $[\mbf{a}\ \mbf{b}]$, omitting all-zero rows and satisfying \ref{lem2.9.1} and \ref{lem2.9.2}:
\[
\begin{array}{@{\hskip -.25cm}c@{\hskip 0cm}c@{\hskip 0cm}c@{\hskip 0cm}c@{\hskip 0cm}c@{\hskip 0cm}}
\begin{bNiceArray}{@{\hskip .05cm}rr@{\hskip .05cm}}[margin]
\CodeBefore
\Body
    2 &  1 \\
   -3 & -2
\end{bNiceArray}
&
\begin{bNiceArray}{@{\hskip .05cm}rr@{\hskip .05cm}}[margin]
\CodeBefore
\cellcolor{black!10}{1-1,1-2,2-1,2-2}
\Body
    2 &  -2 \\
   -3 & 1
\end{bNiceArray}
&
\begin{bNiceArray}{@{\hskip .05cm}rr@{\hskip .05cm}}[margin]
\CodeBefore
\cellcolor{black!10}{1-1,1-2,2-1,2-2}
\Body
    2 & -3 \\
   -3 & 2
\end{bNiceArray}
&
\begin{bNiceArray}{@{\hskip .05cm}rr@{\hskip .05cm}}[margin]
\CodeBefore
\Body
    2 &  -1 \\
     -3 & 1 \\
     0 & -1
\end{bNiceArray}
&
\begin{bNiceArray}{@{\hskip .05cm}rr@{\hskip .05cm}}[margin]
\CodeBefore
\cellcolor{black!10}{2-1,2-2,3-2}
\cellcolor{black!25}{3-1}
\Body
    2 &  1 \\
     -3 & -1 \\
     0 & -1
\end{bNiceArray}.
\end{array}
\]
The first matrix appears in \eqref{eqLem2.9List}.
For the fourth matrix in row $1$ above, we have a subdeterminant of the form \eqref{eqCorner} with $(a,b,c,d,e) = (2,-1,0,-1,1)$.
Hence, we assume that neither $\ub{a}$ nor $\ub{b}$ is $[2,-3]^\top$ up to permuting rows.

If $\ub{a} = [1,-2]^\top$, then the matrices below are all possibilities for $[\mbf{a}\ \mbf{b}]$, omitting all-zero rows and satisfying \ref{lem2.9.1} and \ref{lem2.9.2}:
\[
\begin{array}{@{\hskip -.25cm}c@{\hskip 0cm}c@{\hskip 0cm}c@{\hskip 0cm}}
\begin{bNiceArray}{@{\hskip .05cm}rr@{\hskip .05cm}}[margin]
\CodeBefore
\Body
    1 & -2 \\
   -2 & 1
\end{bNiceArray}
&
\begin{bNiceArray}{@{\hskip .05cm}rr@{\hskip .05cm}}[margin]
\CodeBefore
\Body
    1 &  -1 \\
     -2 & 1 \\
     0 & -1
\end{bNiceArray}
&
\begin{bNiceArray}{@{\hskip .05cm}rr@{\hskip .05cm}}[margin]
\CodeBefore
\Body
    1 &  1 \\
     -2 & -1 \\
    0 & -1
\end{bNiceArray}.
\end{array}
\]
Each matrix above appears in \eqref{eqLem2.9List}.
Thus, we assume that neither $\ub{a}$ nor $\ub{b}$ is $[1,-2]^\top$ up to permuting rows.

The only remaining possibility is that each of $\ub{a}$ and $\ub{b}$ is $[-1,-1,1]^\top$ up to permuting rows.
The matrices below are all possibilities for $[\mbf{a}\ \mbf{b}]$, omitting all-zero rows and satisfying \ref{lem2.9.1} and \ref{lem2.9.2}:
\[
\begin{array}{@{\hskip -.25cm}c@{\hskip 0cm}c@{\hskip 0cm}c@{\hskip 0cm}}
\begin{bNiceArray}{@{\hskip .05cm}rr@{\hskip .05cm}}[margin]
\CodeBefore
\Body
    -1 & -1 \\
   -1 & 1 \\
    1 & -1
\end{bNiceArray}
&
\begin{bNiceArray}{@{\hskip .05cm}rr@{\hskip .05cm}}[margin]
\CodeBefore
\Body
    -1 & 0 \\
     0 & -1 \\
    -1 & -1 \\
    1 & 1 \\
\end{bNiceArray}
&
\begin{bNiceArray}{@{\hskip .05cm}rr@{\hskip .05cm}}[margin]
\CodeBefore
\Body
    -1 & 0 \\
     0 & -1 \\
    -1 &  1 \\
     1 & -1 \\
\end{bNiceArray}.
\end{array}
\]
Each matrix above appears in \eqref{eqLem2.9List}.
\end{proof}

We now restrict the possibilities for three columns outside of a spanning clique in a $3$-modular matrix.

\begin{lemma} \label{lem: no 3-element independent flats}
Let
\[
\mbf{A} = 
\begin{bNiceArray}{@{\hskip .05cm}ccc|ccc@{\hskip .05cm}}[margin]
\Block{4-3}{ \mbf{D}_{r+1} }&&& \mbf{a} & \mbf{b} & \mbf{c} \\
 \cline{4-6}
  &&& 1 & 0 & 0\\ 
  &&& 0 & 1 & 0\\
  &&& 0 & 0 & 1
\end{bNiceArray}
\]
be an integer-valued matrix with rows that sum to $\mbf{0}$ and no parallel columns.
If $\mbf{a}$, $\mbf{b}$, $\mbf{c}$ are distinct, then $\mbf{A}$ is not $3$-modular.
\end{lemma}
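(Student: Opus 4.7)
The plan is to reduce \Cref{lem: no 3-element independent flats} to pairwise constraints given by \Cref{lem: characterization of 2-element independent flats}. For each of the three pairs among $\{\mbf{a},\mbf{b},\mbf{c}\}$, delete the remaining column together with the single $\mbf{I}_3$-row in which it has its $1$; the resulting submatrix of $\mbf{A}$ has the form required by the hypothesis of \Cref{lem: characterization of 2-element independent flats}. Consequently, each pair, after an appropriate row permutation, must occur as one of the eight pair-types listed in \eqref{eqLem2.9List}. In particular, each single column $\ub{a}, \ub{b}, \ub{c}$ (up to permuting rows and negating) must match one of the seven vectors enumerated in \Cref{lem: possible columns}.

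I would then carry out a case analysis on the shape of $\mbf{a}$, possibly chosen as the column with the largest entry in absolute value to cut cases. For each of the seven possibilities for $\ub{a}$, \Cref{lem: characterization of 2-element independent flats} restricts $\mbf{b}$ to a short list (both via the allowed column shapes and via items (i)--(ii) there on the symmetric difference $\supp(\mbf{a}) \triangle \supp(\mbf{b})$); then the constraint that $\mbf{c}$ pair-validly with both $\mbf{a}$ and $\mbf{b}$, while being distinct from each, shrinks the candidates for $\mbf{c}$ still further. In each surviving case I would either derive a contradiction directly from the pairwise lists, or exhibit a small submatrix of $\mbf{A}$ (typically of size $3 \times 3$ or $4\times 4$, involving rows drawn from $\supp(\mbf{a}) \cup \supp(\mbf{b}) \cup \supp(\mbf{c})$ together with one or more of the three $\mbf{I}_3$-rows) whose absolute determinant is at least $4$. \Cref{lem: perturb matrix} is the natural tool to rewrite $\mbf{a},\mbf{b},\mbf{c}$-entries by adding columns of $\mbf{D}_{r+1}$, reducing each verification to a quick $2 \times 2$ or $3 \times 3$ determinant check.

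The main obstacle is the volume of cases, as in the proof of \Cref{lem: characterization of 2-element independent flats}. However, I expect the casework to be lighter here than there: the extra identity row (as compared with the $\mbf{I}_2$ setting of the pairwise lemma) creates a new ``third direction'' that quickly forces large $3 \times 3$ subdeterminants, and the triple symmetry among $\mbf{a}, \mbf{b}, \mbf{c}$ collapses many cases up to relabeling. Where possible I would also prune branches by arguing that, given a fixed compatible pair $(\mbf{a}, \mbf{b})$, any third column satisfying the pairwise constraints with both $\mbf{a}$ and $\mbf{b}$ must coincide with one of them, contradicting distinctness without any determinant calculation at all.
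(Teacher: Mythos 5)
Your plan matches the paper's proof: apply \Cref{lem: characterization of 2-element independent flats} to each pair among $\{\mbf{a},\mbf{b},\mbf{c}\}$, use the resulting pairwise constraints to prune (the paper first eliminates $[2,-3]^\top$ and $[-2,-1,2]^\top$ precisely because each appears with only one possible partner in \eqref{eqLem2.9List}, so it cannot be compatible with two distinct partners), and then finish the small remaining list of triples by exhibiting a subdeterminant of absolute value at least $4$. Two minor execution notes: to land exactly in the hypothesis of \Cref{lem: characterization of 2-element independent flats} you should delete only the third column and keep its $\mbf{I}_3$-row, absorbing that row's zero into the vectors $\mbf{a}'$ and $\mbf{b}'$ so that $\mbf{D}_{r+1}$ and the zero-row-sum condition are preserved (deleting the row as you describe lands you instead in the equivalent $[\mbf{I}_r\,\mbf{D}_r]$-form of \eqref{eqZeroSum0}, which also works but needs the paper's remark that the two forms have identical maximal subdeterminants); and the final contradictions in the paper come from $5\times5$ and $6\times6$ subdeterminants (the templates \eqref{matrixb1}--\eqref{matrixb3}) rather than the $2\times2$ or $3\times3$ checks you anticipate, since \Cref{lem: perturb matrix} alone does not suffice for all surviving cases.
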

\begin{proof}
Assume to the contrary that $\mbf{A}$ is $3$-modular.
\Cref{lem: characterization of 2-element independent flats} applies to each pair from $\{\mbf{a}, \mbf{b}, \mbf{c}\}$.
This implies that $\ub{a}$, $\ub{b}$, and $\ub{c}$ are not $[2,-3]^\top$ or $[-2,-1,2]^\top$, up to permuting rows, because each appears in only one matrix in \Cref{lem: characterization of 2-element independent flats}.
Thus, each of $\ub{a}$, $\ub{b}$, and $\ub{c}$ is $[1,-2]^\top$ or $[-1,-1,1]^\top$, up to permuting rows.
The matrices below are all possibilities for $[\mbf{a}\, \mbf{b} \, \mbf{c}]$ when each pair appears in Lemma \ref{lem: characterization of 2-element independent flats}, omitting all-zero rows:

\begin{align}
&\begin{bNiceArray}{@{\hskip .05cm}rrr@{\hskip .05cm}}[margin]
\CodeBefore
\Body
    1 &  -2 & -1\\
     -2 & 1 & 1\\
    0 & 0 & -1
\end{bNiceArray}
~
\begin{bNiceArray}{@{\hskip .05cm}rrr@{\hskip .05cm}}[margin]
\CodeBefore
\Body
    1 &  -1 & -1\\
   -2 & 1 & 1\\
    0 & -1 & 0\\
    0 & 0 & -1
\end{bNiceArray}
~
\begin{bNiceArray}{@{\hskip .05cm}rrr@{\hskip .05cm}}[margin]
\CodeBefore
\Body
    1 &  1 & 1\\
   -2 & -1 & -1\\
    0 & -1 & 0\\
    0 & 0 & -1
\end{bNiceArray}\label{case:abc1}\\
&
\begin{bNiceArray}{@{\hskip .05cm}rrr@{\hskip .05cm}}[margin]
\CodeBefore
\Body
    1 &  -1 & 1\\
     -2 & 1 & -1\\
    0 & -1 & -1
\end{bNiceArray}
~
\begin{bNiceArray}{@{\hskip .05cm}rrr@{\hskip .05cm}}[margin]
\CodeBefore
\Body
    1 &  -1 & 1\\
   -2 & 1 & -1\\
    0 & -1 & 0\\
    0 & 0 & -1
\end{bNiceArray}
\label{case:abc2}\\
&
\begin{bNiceArray}{@{\hskip .05cm}rrr@{\hskip .05cm}}[margin]
\CodeBefore
\Body
     -1 & 1 & -1\\
    1 & -1 & 1\\
    -1 &  -1 & 0\\
    0 & 0 & -1
\end{bNiceArray}
~
\begin{bNiceArray}{@{\hskip .05cm}rrr@{\hskip .05cm}}[margin]
\CodeBefore
\Body
     -1 & 1 & 1\\
    1 & -1 & -1\\
    -1 &  -1 & 0\\
    0 & 0 & -1
\end{bNiceArray}
~
\begin{bNiceArray}{@{\hskip .05cm}rrr@{\hskip .05cm}}[margin]
\CodeBefore
\Body
     \mbf{a}' & \mbf{b}' & \mbf{c}'\\
    -1 & 0 & 0\\
    0 & -1 & 0\\
    0 & 0 & -1
\end{bNiceArray}\label{case:abcd}\\
&
\begin{bNiceArray}{@{\hskip .05cm}rrr@{\hskip .05cm}}[margin]
\CodeBefore
\Body
    -1 &  -1 & 1\\
     -1 & 1 & -1\\
    1 & -1 & -1
\end{bNiceArray}
~
\begin{bNiceArray}{@{\hskip .05cm}rrr@{\hskip .05cm}}[margin]
\CodeBefore
\Body
    1 &  1 & 1\\
     -1 & -1 & 0\\
    -1 & 0 & -1\\
    0 & -1 & -1
\end{bNiceArray}.\label{case:oddhole}
\end{align}
In \eqref{case:abcd}, each of $\mbf{a}'$, $\mbf{b}'$, and $\mbf{c}'$ is $[1,-1]^\top$ or $[-1,1]^\top$.
The matrix $\mbf{A}$ obtained from different choices of $\mbf{a}'$, $\mbf{b}'$, and $\mbf{c}'$ are all equivalent up to scaling columns by $-1$ and swapping rows, so we assume that $\mbf{a}' = \mbf{c}' = [-1,1]^\top$ and $\mbf{b}' = [1,-1]^\top$.

We show that each matrix in \eqref{case:abc1}-\eqref{case:oddhole} yields a contradiction.
For $x,y,w \in \mbb{Z}$, consider the matrices

\begin{align}
&\begin{bNiceArray}{@{\hskip .05cm}rrrrr@{\hskip .05cm}}[margin]
\CodeBefore
\Body
    0 & 0 & -2 &  1& x\\
     1 & 0 & w & -1 & y\\
    0 & 1 & 1 & 0 & 0\\
    0 & -1 & 0 & 1 & 0\\
    -1 & 0 & 0 & 0 & 1
\end{bNiceArray} \label{matrixb1}\\
&\begin{bNiceArray}{@{\hskip .05cm}rrrrrr@{\hskip .05cm}}[margin]
\CodeBefore
\Body
    0 & 0 & -1 &  0& 0 & 1\\
    0 & 0 & -1 &  1& x & -1\\
     1 & 0 & w & -1 & y & 0\\
    0 & 1 & 1 & 0 & 0& 0\\
    0 & -1 & 0 & 1 & 0& 0\\
    -1 & 0 & 0 & 0 & 1& 0
\end{bNiceArray} \label{matrixb2}\\
&\begin{bNiceArray}{@{\hskip .05cm}cccrrr@{\hskip .05cm}}[margin]
    1-y & 0 & 0 & -1 & -1 & y\\
    0 & 1-x & 0 &  -1 & x & -1\\
     0 & 0 & 1-w & w & -1 & -1\\
    0 & 0 & -1 & 1 & 0 & 0\\
    0 & -1 & 0 & 0 & 1 & 0\\
    -1 & 0 & 0 & 0 & 0 & 1
\end{bNiceArray}. \label{matrixb3}
\end{align}
Matrices \eqref{matrixb1} and \eqref{matrixb2} have an absolute determinant of $|wx+x+3(y+1)|$.
Matrix \eqref{matrixb3} has an absolute determinant of $4$.

After possibly permuting rows and columns, and multiplying columns by $-1$ (which does not affect the absolute determinant), the matrices in \eqref{case:abc1} have a submatrix of the form \eqref{matrixb1} with $(w,x,y) = (0,1,0)$; thus, they are not $3$-modular.
After permutations and column negations, the matrices in \eqref{case:abc2} have a submatrix of the form \eqref{matrixb1} with $(w,x,y) = (1,-1,1)$; thus, they are not $3$-modular.
After permutations and column negations, the matrices in \eqref{case:abcd} have a submatrix of the form \eqref{matrixb2} with $(w,x,y) = (1,-1,1)$; thus, they are not $3$-modular.
The first matrix in \eqref{case:oddhole} has a submatrix of the form \eqref{matrixb3} with $(w,x,y) = (1,1,1)$, while the second matrix in \eqref{case:oddhole} has a submatrix of the form \eqref{matrixb3} with $(w,x,y) = (0,0,0)$; thus, they are not $3$-modular.
\end{proof}
%

%%%%%%%%%%%%%%%%%%%%%%%%%%%%%%%%%%%%%%%%%%%%%
%%%%%%%%%%%%%%%%%%%%%%%%%%%%%%%%%%%%%%%%%%%%%
\subsection{A proof of Theorem \ref{thm: main result} in the spanning clique case}\label{subsec:SCC3}
%%%%%%%%%%%%%%%%%%%%%%%%%%%%%%%%%%%%%%%%%%%%%
%%%%%%%%%%%%%%%%%%%%%%%%%%%%%%%%%%%%%%%%%%%%%

We now prove the main result of this section.
\begin{proposition} \label{prop: the spanning clique case}
If $r \ge 9 \cdot \binom{94}{2} + 2$ and $M$ is a simple $3$-modular, rank-$r$ matroid with a spanning clique restriction, then $\elem(M) \le \binom{r+1}{2} + 2(r-1)$, and $M$ has at most $9 \cdot \binom{94}{2}$\ many $2$-local-critical elements.
\end{proposition}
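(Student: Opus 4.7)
My plan is to combine the standardized representation from \Cref{prop: standardized representation}, the structural constraint of \Cref{lem: no 3-element independent flats}, and the critical-element bound of \Cref{lem: bounding k-local-critical points}. First, I apply \Cref{prop: standardized representation} with $\Delta = 3$ to obtain a $3$-modular representation $\mathbf{A}$ of $M$ and a subset $B' \subseteq B$ of the frame with $|B'| \le (10 \cdot 9 + 1)\lfloor\log_2 3\rfloor = 91$. By property (ii), each $e \in E(M) - X$ either lies in $\cl(B')$ or satisfies $\mathbf{A}[B - B', e] = \mathbf{e}_b$ for a unique $b \in B - B'$; for each such $b$ let $T_b := \{e \in E(M) - X : \mathbf{A}[B - B', e] = \mathbf{e}_b\}$, and set $B'' := \{b \in B - B' : |T_b| \ge 3\}$.

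The key technical step is to prove $|B''| \le 2$. I will argue by contradiction: assuming three distinct $b_1, b_2, b_3 \in B''$ with $e_i \in T_{b_i}$, I will extract a minor of $M$ whose representation (after conversion to the zero-sum form of \eqref{eqZeroSum1}) matches the hypotheses of \Cref{lem: no 3-element independent flats}. Concretely, I will restrict to $\cl(B' \cup \{b_1, b_2, b_3\}) \cup \{e_1, e_2, e_3\}$ and use the rows indexed by $b_1, b_2, b_3$ as the three ``extra'' rows of \Cref{lem: no 3-element independent flats}, since each $e_i$ has a unit entry in exactly row $b_i$ among $\{b_1,b_2,b_3\}$. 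The three columns coming from $e_1, e_2, e_3$ contribute pairwise-distinct columns $\mathbf{a}, \mathbf{b}, \mathbf{c}$ determined by $\mathbf{y}_{e_i} = \mathbf{A}[B', e_i]$, and pairwise distinctness follows from the $e_i$ being pairwise non-parallel in $M$. \Cref{lem: no 3-element independent flats} then yields a contradiction to $3$-modularity.

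With $|B''| \le 2$, define $\bar B := B' \cup B''$, so $|\bar B| \le 93$. By construction of $B''$, every element of $B - \bar B$ is parallel in $M/\bar B$ to at most two elements of $E(M) - X$, so \Cref{lem: bounding k-local-critical points} with $k = 2$ bounds the number of $2$-local-critical elements of $M$ by $|\cl(\bar B)|$. Since $M|\cl(\bar B)$ is a simple rank-$|\bar B|$ $3$-modular matroid, \Cref{thm: LPSX} gives $|\cl(\bar B)| \le 9 \binom{|\bar B|+1}{2} \le 9\binom{94}{2}$, proving the second conclusion.

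For the density bound $\elem(M) \le \binom{r+1}{2} + 2(r-1)$ I will use induction on $r$: since $|X| = \binom{r+1}{2}$ exceeds $9\binom{94}{2}$, some $e \in X$ is not $2$-critical, and $\si(M/e)$ is a simple rank-$(r-1)$ $3$-modular matroid with spanning clique $M(K_r)$ (obtained by contracting an edge of $K_{r+1}$ and discarding parallels). The inductive hypothesis gives $\elem(\si(M/e)) \le \binom{r}{2} + 2(r-2)$, and combining with $\elem(M) - \elem(M/e) \le r + 2$ (valid since $e$ is not $2$-critical) yields the bound. The main obstacle is the minor construction in the second paragraph: the extracted sub-matroid naturally places $e_1, e_2, e_3$ in the span of its clique, so care is needed to re-index the rows and columns to match the form of \Cref{lem: no 3-element independent flats}, in which the extras extend the rank by three. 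A secondary subtlety is the base case of the induction at the threshold value of $r$, which likely needs a direct combinatorial argument rather than appealing to the proposition at $r-1$.
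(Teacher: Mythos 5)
Your argument for the \emph{second} conclusion (the bound on $2$-local-critical elements) is essentially sound. The claim $|B''|\le 2$ does follow from \Cref{lem: no 3-element independent flats}, but the justification for picking $e_1,e_2,e_3$ with pairwise distinct $B'$-parts is slightly off: non-parallelism of the $e_i$ as vectors in $\mathbb{R}^r$ does not force the truncated vectors $\mathbf{A}[B',e_i]$ to be distinct. What does work is a greedy argument: since $|T_{b_i}|\ge 3$ and distinct elements of $T_{b_i}$ must have distinct $B'$-parts (otherwise they would be parallel), you can choose $e_2$ avoiding $\mathbf{A}[B',e_1]$ and then $e_3$ avoiding both. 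With $\bar B = B'\cup B''$, $|\bar B|\le 93$, the hypotheses of \Cref{lem: bounding k-local-critical points} with $k=2$ are met (each $b\in B-\bar B$ has $|T_b|\le 2$), and \Cref{thm: LPSX} gives $|\cl(\bar B)|\le 9\binom{94}{2}$. So that part goes through. Incidentally, note your approach differs from the paper's: you cover by controlling the degrees of the unit vectors $\mathbf{e}_b$, while the paper covers by controlling the degrees of the $B'$-columns $\mathbf{a}_i$; both are legitimate ``no $3$-matching'' consequences of \Cref{lem: no 3-element independent flats}.

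The density bound, however, has a real gap, and it is not merely the ``secondary subtlety'' you flag at the end. Your induction on $r$ bottoms out at $r = 9\binom{94}{2}+2$, and there you would need a direct proof. But the bound you can extract directly from $|\bar B|\le 93$ is of the form $\elem(M) \le \binom{r+1}{2} + 2(r-|\bar B|) + |\cl(\bar B)\cap(E(M)-X)|$, and the last term is of order $8\binom{94}{2}$ — far larger than the available slack $2|\bar B|-2$. So the threshold case cannot be closed with the counting you have set up; the same issue means the induction has no base case it could bottom out at. This is precisely where the bulk of the paper's proof lives: when $n_2\ge 3$ and $n_i=0$ for $i\ge 3$ (equivalently, only two columns $\mathbf{a}_1,\mathbf{a}_2$ of $\mathbf{A}[B',E-X]$ each pair with many unit vectors — the case where your $B''$ is empty and the vertex-cover bound gives no information), one must pin down the actual entries of $\mathbf{a}_1,\mathbf{a}_2$ and of any further column $\mathbf{c}$ via \Cref{lem: possible columns}, \Cref{lem: characterization of 2-element independent flats}, and \Cref{lem: finding a stack}, split on $|\det(\mathbf{A}[B',B'])|\in\{1,2,3\}$, and in the non-unimodular subcases carry out an arithmetic argument about fractional entries. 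None of this appears in your proposal, and without it the density inequality $\elem(M)\le\binom{r+1}{2}+2(r-1)$ is not established.
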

\begin{proof}
Let $E = E(M)$.
We assume that $E = [n]$ so that determinants are well-defined.
Let $X \subseteq E$ be such that $M|X \cong M(K_{r+1})$, and let $B \subseteq X$ be a frame for $M|X$.
Let $\mbf{A}$ be a $3$-modular representation of $M$ of the form described in \Cref{prop: standardized representation} with respect to $B$.
In particular, note that each column of $\mbf{A}$ has greatest common divisor $1$.
Set $m: = |B'|$, where $B'$ is from \Cref{prop: standardized representation}, and note that $m \le 91$.

Set $\mbf{A}' := \mbf{A}[B', E - X]$. 
Since $\mbf{A}[B - B', B - B']$ is an identity matrix by Proposition \ref{prop: standardized representation} \ref{item:SC1}, it follows that $\mbf{A}'$ is $3$-modular.
%
\begin{comment}
By \Cref{thm: LPSX}, there are at most $9 \cdot \binom{92}{2}$ parallel classes of columns of $\mbf{A}'$.
%
Clearly at most one of these parallel classes contains a column of $\mbf{A}'$ whose entries have greatest common divisor at least two, since $\mbf{A}'$ is $3$-modular.
%
Therefore, all but at most one parallel class of columns of $\mbf{A}'$ has size at most two, so $\mbf{A}'$ has at most $18 \cdot \binom{92}{2} + 1$ distinct, non-zero columns.
%
\end{comment}
Let $\mbf{a}_1, \dotsc, \mbf{a}_t$ be an ordering of the distinct non-zero column vectors of $\mbf{A}'$.
By \Cref{prop: standardized representation}, each column of $\mbf{A}[E-X]$ has the form $[\mbf{a}_i^\top \, \mbf{0}^\top]^\top$ or $[\mbf{a}_i^\top \, \mbf{e}_y^\top]^\top$, where $i\in [t]$ and $\mbf{e}_y$ is a unit column of $\mbf{A}[B - B', E - X]$.
For each $i \in [t]$, let $n_i$ be the number of different unit columns $\mbf{e}_y$ of $\mbf{A}[B - B', E - X]$ appearing with $\mbf{a}_i$ as a column of $\mbf{A}[E-X]$.
Without loss of generality, $n_i \ge n_{i+1}$ for all $i \in [t-1]$.

We first prove \Cref{prop: the spanning clique case} when $n_2 \le 2$ or $n_i \ge 1$ for some $i \ge 3$.
Let $B_1 \subseteq B - B'$ be a set of minimum cardinality such that every $a \in E - X$ with $\mbf{A}'[a] \notin \{\mbf{a}_1\}$ satisfies $\supp(\mbf{A}[a]) \subseteq B' \cup B_1$.
By \Cref{lem: no 3-element independent flats}, there do not exist distinct indices $x,y,z$ and distinct $i,j,k \in [t]$ such that $[\mbf{a}_i^\top \, \mbf{e}_x^\top]^\top$, $[\mbf{a}_j^\top \, \mbf{e}_y^\top]^\top$, and $[\mbf{a}_k^\top \, \mbf{e}_z^\top]^\top$ are columns of $\mbf{A}$.
Using \Cref{lem: no 3-element independent flats} in this way, along with the ordering $n_1 \ge n_2 \ge \cdots \ge n_t$, it follows that if $n_2 \le 2$ or $n_i \ge 1$ for some $i \ge 3$, then $|B_1| \le 2$.
Thus, $\mbf{A}[B' \cup B_1]$ has rank at most $91+2 = 93$.
By \Cref{thm: LPSX}, we have $|\cl(B' \cup B_1)| \le 9 \cdot \binom{94}{2}$.
It follows that $\mbf{A}$ has at most $\binom{r+1}{2} + r + 9 \cdot\binom{94}{2}$ columns, which is at most $\binom{r+1}{2} + 2(r-1)$ because $r \ge 9 \cdot \binom{94}{2} + 2$.
Also, since each $a \in E - X$ with $\mbf{A}'[a] \notin \{\mbf{a}_1\}$ satisfies $\supp(\mbf{A}[a]) \subseteq B' \cup B_1$, we may apply \Cref{lem: bounding k-local-critical points} with $k = 1$ to $B' \cup B_1$ to conclude that $M$ has at most $9 \cdot\binom{94}{2}$ many $1$-local-critical elements, and therefore at most $9 \cdot\binom{94}{2}$ many $2$-local-critical elements.
Therefore, \Cref{prop: the spanning clique case} is true if $n_2 \le 2$ or $n_i \ge 1$ for some $i \ge 3$.

For the remainder of the proof, we assume that $n_2 \ge 3$ and $n_i =0 $ for all $i \ge 3$.
Hence, the columns $\mbf{a}_1$ and $\mbf{a}_2$ of $\mbf{A}'$ each appear with at least $3$ different unit columns of $\mbf{A}[B - B', E - X]$.
Moreover, for $i \ge 3$, the condition $n_i =0 $ implies that $\mbf{A}[E-X]$ does not contain columns of the form $[\mbf{a}_i^\top \, \mbf{e}_y^\top]^\top$; note that there may still be columns of the form $[\mbf{a}_i^\top \, \mbf{0}^\top]^\top$.
Since $n_i = 0$ for all $i \ge 3$, we may apply \Cref{lem: bounding k-local-critical points} with $k = 2$ to $B'$ to show that $M$ has at most $9 \cdot\binom{92}{2}$ many $2$-local-critical elements, as desired.
So, for the remainder of the proof it suffices to show that $\mbf{A}$ has at most $\binom{r+1}{2} + 2(r-1)$ columns.

We now consider two cases depending on the determinant of $\mbf{A}[B', B']$.

\medskip
\noindent{\bf Case 1:} Assume $|\det(\mbf{A}[B', B'])| = 1$.
Thus, by applying elementary row operations to $\mbf{A}[B', E]$ we can assume that $\mbf{A}[B', B'] = \mbf{I}_{|B'|}$.
Combined with \Cref{prop: standardized representation} \ref{item:SC1}, this implies that $\mbf{A}[B, B] = \mbf{I}_r$.
By \Cref{prop: standardized representation} \ref{item:SC3}, we see  that $\mbf{A}[X] = [\mbf{I}_r \, \mbf{D}_r]$.
Let $\mbf{A}_1$ be obtained from $\mbf{A}$ by adding a new (first) row indexed by $b_0 \notin B$ that is the negation of the sum of the rows of $\mbf{A}$, i.e., $\mbf{A}_1[\{b_0\}, E] := - \sum_{b\in B} \mbf{A}[\{b\},E]$, and then negating each column with index in $B$.
Thus, we have $\mbf{A}_1[X] = \mbf{D}_{r+1}$, and the rows of $\mbf{A}_1$ sum to $\mbf{0}$.

Set $\mbf{A}_1' := \mbf{A}_1[B' \cup b_0, E - X]$.
Let $\mbf{a}$ and $\mbf{b}$ be columns of $\mbf{A}_1'$ that each appear above at least three different unit vectors of $\mbf{A}_1[B - B', E - X]$.
\Cref{lem: characterization of 2-element independent flats} lists the possibilities for $[\mbf{a} \, \mbf{b}]$.
By scaling columns of $\mbf{A}_1[E - X]$ by $-1$, we assume the following holds: if $\mbf{c}$ is a column of $\mbf{A}_1'$ with a non-zero entry outside of $\supp(\mbf{a}) \cup \supp(\mbf{b})$, then $\mbf{c}$ has a positive entry outside of $\supp(\mbf{a}) \cup \supp(\mbf{b})$.
Suppose there is some $c \in E - X$ such that $\mbf{A}_1'[c]$ is distinct from $\mbf{a}$ and $\mbf{b}$ and has a non-zero entry outside of $\supp(\mbf{a}) \cup \supp(\mbf{b})$; then it has a positive entry outside of $\supp(\mbf{a}) \cup \supp(\mbf{b})$. 
By \Cref{lem: finding a stack}, this positive entry is $1$.
However, as $n_1 \ge n_2 \ge 3$, both $\mbf{a}$ and $\mbf{b}$ appear above at least $3$ different unit vectors, so $\mbf{A}_1$ has a submatrix of the form in \Cref{lem: no 3-element independent flats}, which is a contradiction.
Therefore, for each $c \in E - X$, the column $\mbf{A}_1'[c]$ has support contained in $\supp(\mbf{a}) \cup \supp(\mbf{b})$.

We now bound the number of columns of $\mbf{A}$ by considering the possibilities for $[\mbf{a} \, \mbf{b}]$ shown in \Cref{lem: characterization of 2-element independent flats}.
To this end, let $\mbf{c}$ be a column of $\mbf{A}_1'$ not equal to $\mbf{a}$ or $\mbf{b}$, if such a column exists.
Note that the vector $\underline{\mbf{c}}$ is one of the vectors from Lemma \ref{lem: possible columns}, up to row permutations.

\medskip
\noindent {\bf Subcase 1.1:}
Assume that $|\supp(\mbf{a})\cup \supp(\mbf{b})| = 2$.
There is no choice for $\mbf{c}$ because $|\supp(\mbf{c})| \ge 3$.
Also, each of $\mbf{a}$ and $\mbf{b}$ cannot appear above the zero vector, by Lemma \ref{lem: possible columns}.
Thus, $\mbf{A}_1$ has at most $\binom{r+1}{2} + 2(r - 2)$ columns, as desired.

\medskip
\noindent {\bf Subcase 1.2:}
Assume that $|\supp(\mbf{a}) \cup \supp(\mbf{b})| = 3$.
Hence, $[\mbf{a} \, \mbf{b}]$ is the third, fourth, fifth, or sixth matrix shown in the statement of \Cref{lem: characterization of 2-element independent flats}, after omitting all-zero rows.
Consequently, $\ub{c}$ is $[-2, 1, 1]^\top$ or $[-3,2,1]^\top$, up to permuting rows and scaling by $-1$.

If $\ub{a} = [-2,1]^\top$, then by \Cref{lem: finding a stack} we see that the entry of $\mbf{c}$ in $\supp(\mbf{b}) - \supp(\mbf{a})$ is $1$ or $-1$.
It follows that there are only four possibilities for $\mbf{c}$, up to scaling by $-1$, so $\mbf{A}_1$ has at most $\binom{r+1}{2} + 2(r - 3) + 4$ columns, as desired.

If $\ub{a} = [-1,-1,1]^\top$ and $\ub{b} = [-1,1,-1]^\top$, then we claim $\mbf{c}$ does not contain $-3$ as an entry.
Otherwise, $[\mbf{a} \, \mbf{b} \, \mbf{c}]$ has a submatrix of the form
\begin{equation}\label{eq-1-1-3x}
\left[\begin{array}{@{\hskip .05cm}rr@{\hskip .05cm}}
-1 & -3\\
-1 & x
\end{array}\right]
\quad \text{for}~x \in \{1,2\},
\end{equation}
which is a contradiction.
Hence, $\ub{c} = [-2, 1, 1]^\top$ up to permuting rows and scaling by $-1$, and it follows that $\mbf{A}_1$ has at most $\binom{r+1}{2} + 2(r - 3) + 3$ columns, as desired.

Next, suppose that $\ub{a} = [-1,-1,1]^\top$ and $\ub{b} = [-2,-1,2]^\top$, up to permuting rows.
If $\ub{c} = [-3, 2, 1]^\top$ up to permuting rows, then the entry $-3$ does not appear in a row in which $\mbf{b}$ has a negative entry, or else $[\mbf{a} \, \mbf{b} \, \mbf{c}]$ has a submatrix of the form \eqref{eq-1-1-3x}, which is a contradiction.
If $\ub{c} = [-2, 1, 1]^\top$ up to permuting rows, then the entry $-2$ does not appear in the row in which $\mbf{b}$ has entry $-1$, or else $[\mbf{a} \, \mbf{b} \, \mbf{c}]$ has a submatrix of the form
\[
\left[\begin{array}{@{\hskip .05cm}rr@{\hskip .05cm}}
-2 & 1\\
-1 & -2
\end{array}\right],
\]
which is a contradiction.
Therefore, up to scaling by $-1$, there are only four options for $\mbf{c}$,
so $\mbf{A}_1$ has at most $\binom{r+1}{2} + 2(r - 3) + 4$ columns.

\medskip
\noindent {\bf Subcase 1.3:}
Assume $|\supp(\mbf{a}) \cup \supp(\mbf{b})| = 4$.
Hence, $[\mbf{a} \, \mbf{b}]$ is the seventh or eighth matrix shown in the statement of \Cref{lem: characterization of 2-element independent flats}, after omitting all-zero rows.
By multiplying columns by $-1$ and swapping rows, we assume that $[\mbf{a} \, \mbf{b}]$ is the seventh matrix shown in the statement of \Cref{lem: characterization of 2-element independent flats}.
Up to permuting rows and multiplication by $-1$, the vector $\mbf{c}' := \mbf{c}[\supp(\mbf{a}) \cup \supp(\mbf{b})]$ is one of the vectors of \Cref{lem: possible columns} with at most four non-zero entries.
By \Cref{lem: characterization of 2-element independent flats}, $\pm\mbf{c}'$ is not $[-3,1,1,1]^\top$ or $[-3,2,1,0]^\top$, up to permuting rows, because neither of these vectors appears with a permutation of $[1,1,-1,-1]^\top$ in \Cref{lem: characterization of 2-element independent flats}.

Suppose $\mbf{c}'$ is a permutation of $[-2,2,-1,1]^\top$.
By \Cref{lem: finding a stack}, the entries $2$ and $-2$ appear in the first two rows.
By scaling by $-1$ if necessary, we assume that the fourth entry of $\mbf{c}'$ is $1$.
It follows from \Cref{lem: characterization of 2-element independent flats}, using $\mbf{a}$ and $\mbf{c}$, that $\mbf{c}' = [-2, 2, -1, 1]^\top$.
Using $\mbf{b}$ and $\mbf{c}$, we see that $\mbf{A}_1$ has a submatrix of the form
\[
\left[\begin{array}{@{\hskip .05cm}rr@{\hskip .05cm}}
-2 & -1\\
2 & 1\\
-1 & 0\\
1 & -1\\
0 & 1
\end{array}\right],
\]
contradicting \Cref{lem: characterization of 2-element independent flats}.
Hence, $\mbf{c}'$ is $[-2,1,1,0]^\top$ or $[-1,-1,1,1]^\top$, up to permuting rows and scaling by $-1$.

If $\mbf{c}' = [-2, 1, 1, 0]^\top$, then the quantity
\[
\left|\det\left(\left[\begin{array}{@{\hskip .05cm}rrrr@{\hskip .05cm}}
0 & -1 & -1 & -2\\
1 & 1 & 1 & 1\\
0 & -1 & 0 & 1\\
-1 & 0 & 1 & 0
\end{array}\right]\right)\right| = 4,
\]
is a subdeterminant of $\mbf{A}_1$, which is a contradiction.
Thus, $\mbf{c}'$ is not $[-2, 1, 1, 0]^\top$.
By symmetry, $\mbf{c}'$ is not $[-2, 1, 0, 1]^\top$.

If $\mbf{c}' = [-2, 0, 1, 1]^\top$, then the quantity
\[
\left|\det\left(\left[\begin{array}{@{\hskip .05cm}rrrr@{\hskip .05cm}}
0 & -1 & -1 & -2\\
0 & 1 & 1 & 0\\
1 & -1 & 0 & 1\\
-1 & 0 & 1 & 0
\end{array}\right]\right)\right| = 4,
\]
is a subdeterminant of $\mbf{A}_1$, which is a contradiction.
Thus, $\mbf{c}'$ is not $[-2, 0, 1, 1]^\top$.
By symmetry, $\mbf{c}'$ is not $[0, -2, 1, 1]^\top$.

Now, the only possibilities for $\pm \mbf{c}'$ are 
\[
\left[
\begin{array}{@{\hskip .05cm}r@{\hskip .05cm}}
    1\\
    -2\\
    1\\
    0
\end{array}\right],
\left[
\begin{array}{@{\hskip .05cm}r@{\hskip .05cm}}
    1\\
    -2\\
    0\\
    1
\end{array}\right],
\left[
\begin{array}{@{\hskip .05cm}r@{\hskip .05cm}}
    1\\
    -1\\
    -1\\
    1
\end{array}\right],
\left[
\begin{array}{@{\hskip .05cm}r@{\hskip .05cm}}
    -1\\
    1\\
    -1\\
    1
\end{array}\right],
\left[
\begin{array}{@{\hskip .05cm}r@{\hskip .05cm}}
    -1\\
    -1\\
    1\\
    1
\end{array}\right].
\]
If these vectors all appear as columns in $\mbf{A}_1[\supp(\mbf{a}) \cup \supp(\mbf{b}),E]$, then $\mbf{A}_1$ has the subdeterminant
\[
\left|\det\left(
\left[\begin{array}{@{\hskip .05cm}rrrr@{\hskip .05cm}}
-1 & -1 & 1 & -1\\
1 & 1 & -2 & -1\\
-1 & 0 & 1 & 1\\
0 & -1 & 0 & 1
\end{array}\right]\right)\right| = 4,
\]
which is a contradiction.
Therefore, up to scaling by $-1$, there are only four options for $\mbf{c}$,
so $\mbf{A}_1$ has at most $\binom{r+1}{2} + 2(r - 3) + 4$ columns.
This completes the case when $|\det(\mbf{A}[B', B'])| = 1$.

%%%%% second case

\medskip
\noindent{\bf Case 2:} Assume $|\det(\mbf{A}[B', B'])| \neq 1$.
Set $d := |\det(\mbf{A}[B',B'])|\in \{2,3\}$.
By applying elementary row operations to $\mbf{A}[B', E]$, we assume that $\mbf{A}[B', B']$ is upper-triangular and has only positive entries on the diagonal.
After transforming $\mbf{A}[B', B']$ to be upper-triangular, there is a unique $i \in [m]$ such that $\mbf{A}[i,i] = d$ because $\mbf{A}$ is $3$-modular.
Note that $i > 1$, or else the first column of $\mbf{A}$ has greatest common divisor greater than $1$, which contradicts \Cref{prop: standardized representation} \ref{item:SC4}.
By adding multiples of row $i$ to the rows above it, we assume that $0 \le \mbf{A}[j,i] < d$ for all $j \in [i-1]$.
By permuting rows and columns, we assume that $\mbf{A}[j,i] \ne 0$ for all $j \in [i-1]$.
Every column of $\mbf{A}[X]$ is a difference of columns of $\mbf{B}$, so every entry of $\mbf{A}[X]$ in row $i$ is in $\{-d, 0, d\}$.

Assume to the contrary that there is some $e \in E(M) - X$ such that $\mbf{A}[i,e] \in \{-d,0,d\}$.
By dividing row $i$ of $\mbf{A}[X \cup e]$ by $d$, we obtain an integer matrix $\bar{\mbf{A}}$ that is row-equivalent to $\mbf{A}[X \cup e]$ such that every full-rank square submatrix has absolute determinant at most $3/d$.
The matrix $\bar{\mbf{A}}$ is $1$-modular, so it has at most $\binom{r+1}{2}$ pairwise non-parallel columns by \Cref{thm: LPSX}.
However, then $\mbf{A}[e]$ is parallel to a column of $\mbf{A}[X]$, which is a contradiction.
Thus, for each $e \in E(M) - X$, we have $\mbf{A}[i,e] \notin \{-d,0,d\}$.

Consider the matrix $\hat{\mbf{A}}$ obtained from $\mbf{A}$ in the following way: first, divide row $\mbf{A}[i, E]$ by $d$, and then for each $j \in [i-1]$ add multiples of row $\mbf{A}[i, E]$ to row $\mbf{A}[j, E]$ such that $\hat{\mbf{A}}[X] = [\mbf{I}_r \, \mbf{D}_r]$.
Each entry of $\hat{\mbf{A}}$ is an integer or an integer divided by $d$, and every square submatrix of $\hat{\mbf{A}}$ has absolute determinant at most $3/d$.
Also, for each $e \in E(M) - X$ and $j \in [i]$, the entry of $\hat{\mbf{A}}[j,e]$ is not an integer because $\mbf{A}[i,e] \notin \{-d, 0, d\}$ and $1 \le \mbf{A}[j,i] < d$.
Let $\mbf{a}$ and $\mbf{b}$ be columns of $\hat{\mbf{A}}[B', E - X]$ that each appear above at least three different unit vectors of $\hat{\mbf{A}}[B - B', E - X]$.
We proceed with subcases on $d$.

\medskip
\noindent{\bf Subcase 2.1:} Assume $d= 2$.
If $i \le 2$, then $\mbf{A}$ has $\mbf{e}_1 - (\mbf{e}_1 + 2\mbf{e}_2)$ as a column, which contradicts that we have no column with a common divisor greater than $1$.
Thus, $i \ge 3$.
Hence, $\mbf{a}$ has at least three non-integer entries, each of which is in $\{\pm 1/2, \pm 3/2\}$.
By \Cref{lem: extra column determinant} (applied to the matrix obtained from $\hat{\mbf{A}}$ by adding a new row that is the sum of the negatives of the rows of $\hat{\mbf{A}}$), the positive entries of $\mbf{a}$ sum to at most $1/2$, and the negative entries sum to at least $-3/2$.
As $\mbf{a}$ has at least three non-integer entries, this implies that $-1$ is not an entry of $\mbf{a}$.
It also implies that $\mbf{a}$ has at most four non-integer entries, so $i \le 4$ because $\mbf{a}$ has non-integer entries in the first $i$ rows.

Suppose $i = 4$.
Each of $\ub{a}$, $\ub{b}$ is $[-1/2,-1/2,-1/2,1/2]^\top$, up to permuting rows.
Hence, up to permuting rows and omitting all-zero rows, $[\mbf{a} \, \mbf{b}]$ is
%
% \[
% \left[
% \begin{array}{@{\hskip .05cm}rr@{\hskip .05cm}}
%     -1/2 & -1/2\\
%     -1/2 & -1/2\\
%     -1/2 & 1/2\\
%     1/2 & -1/2
% \end{array}\right].
% \]
\[
\begin{bNiceArray}{@{\hskip .05cm}rr@{\hskip .05cm}}[margin]
\CodeBefore
\cellcolor{black!10}{3-1,4-2}
\cellcolor{black!25}{4-1,3-2}
\Body
     -1/2 & -1/2\\
    -1/2 & -1/2\\
    -1/2 & 1/2\\
    1/2 & -1/2
\end{bNiceArray}.
\]
\Cref{lem: perturb matrix} implies that $\hat{\mbf{A}}$ has a square submatrix with absolute determinant greater than $3/2$, which is a contradiction.

Thus, $i = 3$.
Each of $\ub{a}$, $\ub{b}$ is one of the following, up to permuting rows:
\[
\left[
\begin{array}{@{\hskip .05cm}r@{\hskip .05cm}}
    -1/2\\
    -1/2\\
    1/2
\end{array}\right],
\left[
\begin{array}{@{\hskip .05cm}r@{\hskip .05cm}}
    -1/2\\
    -1/2\\
    -1/2
\end{array}\right].
\]
Hence, $[\mbf{a} \, \mbf{b}]$ is one of the following matrices, after omitting all-zero rows:
\[
\left[
\begin{array}{@{\hskip .05cm}rr@{\hskip .05cm}}
    -1/2 & -1/2\\
    -1/2 & 1/2\\
    1/2 & -1/2
\end{array}\right],
\left[
\begin{array}{@{\hskip .05cm}rr@{\hskip .05cm}}
    -1/2 & -1/2\\
    -1/2 & -1/2\\
    1/2 & -1/2
\end{array}\right].
\]
These options lead to the following submatrices, respectively, of $\hat{\mbf{A}}$ with absolute determinant greater than $3/2$:
\[
\left[
\begin{array}{@{\hskip .05cm}rrrr@{\hskip .05cm}}
    0 & 1 &  -1/2 & 1/2\\
     1 & 0 & 1/2 & -1/2\\
    -1 & 0 & 1 & 0\\
     0 & -1 & 0 & 1
\end{array}\right],
\left[
\begin{array}{@{\hskip .05cm}rrrr@{\hskip .05cm}}
    1 & 0 &  -1/2 & -1/2\\
   -1 & 0 & -1/2 & -1/2\\
    0 & 1 & 1/2 & -1/2\\
     0 & -1 & 1 & 0
\end{array}\right].
\]

\medskip
\noindent{\bf Subcase 2.2:}
Assume $d = 3$.
\Cref{lem: extra column determinant} (applied to the matrix obtained from $\hat{\mbf{A}}$ by adding a new row that is the sum of the negatives of the rows of $\hat{\mbf{A}}$) implies that all entries of $\mbf{a}$ are non-positive, and the negative entries sum to at most $-1$.
Therefore, $\mbf{a}$ has at most three non-integer entries, so $i \le 3$.
Moreover, if $i = 3$, then $\mbf{a} = \mbf{b} = [-1/3,-1/3,-1/3]^\top$ after omitting zeroes, which is a contradiction.
Hence, $i \le 2$.
We have $i > 1$ by assumption, so $i = 2$.

As $i= 2$, the vectors $\ub{a}$ and $\ub{b}$ are each one of $[-1/3,-1/3]^\top$, $[-1/3,-2/3]^\top$, or $[-2/3,-1/3]^\top$.
Moreover, the second column of $\mbf{A}$ is either $[1,3]^\top$ or $[2,3]^\top$, after omitting zeroes.
However, if the second column of $\mbf{A}$ is $[1,3]^\top$, then $\mbf{A}$ has column $\mbf{e}_1 - (\mbf{e}_1 + 3\mbf{e}_2)$ with common divisor greater than $1$, which is a contradiction.
Hence, the second column of $\mbf{A}$ is $[2,3]^\top$.
It follows that 
\[
\begin{bmatrix}
    1 & 2 \\
   0 & 3
\end{bmatrix}
\ub{a} \in \mbb{Z}^2.
\]
Thus, $\ub{a}$ is not $[-1/3, -2/3]^\top$ or $[-2/3,-1/3]^\top$, so $\ub{a} = [-1/3,-1/3]^\top$.
Similarly, $\ub{b} = [-1/3,-1/3]^\top$.
However, then $\mbf{a} = \mbf{b}$, which is a contradiction.
\end{proof}

%%%%%%%%%%%%%%%%%%%%%%%%%%%%%%%%%%%%%%%%%%%%%
%%%%%%%%%%%%%%%%%%%%%%%%%%%%%%%%%%%%%%%%%%%%%
\section{A proof of Theorem \ref{thm: main result}}
%%%%%%%%%%%%%%%%%%%%%%%%%%%%%%%%%%%%%%%%%%%%%
%%%%%%%%%%%%%%%%%%%%%%%%%%%%%%%%%%%%%%%%%%%%%

We use matroid results that will allow us to reduce \Cref{thm: main result} to \Cref{prop: the spanning clique case}.
We say that a matroid $M$ is {\bf vertically $s$-connected} if there is no partition $(X,Y)$ of $E(M)$ for which $r(X) + r(Y) - r(M) < j$ and $\min(r(X), r(Y)) \ge j$, where $j < s$.
We will apply the following result from~\cite[Theorem 6.3]{GNW2024} with $M$ as the vector matroid of a $3$-modular matrix, $p(x) = \binom{x + 1}{2} + 2(x - 1)$, and $\ell = 5$ (recall that $U_{2,7} \notin \mcf{M}_3$ by \cite[Proposition 8.10]{GNW2024}).

\begin{theorem}[Geelen, Nelson, Walsh \cite{GNW2024}] \label{reduction}
There is a function $f_{\ref{reduction}}\colon \mbb{R}^6\to \mbb{Z}$ such that the following holds for all integers $k,s,\ell$ with $k,s\ge 1$ and $\ell \ge 2$ and any real polynomial $p(x)=ax^2+bx+c$ with $a>0$:

\smallskip%
\noindent If $M$ is a matroid with no $U_{2, \ell+2}$-minor and satisfies $r(M)\ge f_{\ref{reduction}}(a,b,c,\ell,k,s)$ and $\elem(M)>p(r(M))$, then $M$ has a minor $N$ with $\elem(N)>p(r(N))$ and $r(N)\ge k$ such that either
\begin{enumerate}[label=$(\arabic*)$, leftmargin = *, noitemsep]
\item\label{GNW1} $N$ has a spanning clique restriction, or
\item\label{GNW2} $N$ is vertically $s$-connected and has an $s$-element independent set $S$ such that $\elem(N)-\elem(N/e)>p(r(N))-p(r(N)-1)$ for each $e\in S$.
\end{enumerate}
\end{theorem}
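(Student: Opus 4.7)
The plan is to extract a minor-minimal counterexample and analyze it via vertical connectivity and per-element density drops, ultimately invoking the growth-rate machinery from the matroid minors program to force one of the two structural conclusions. I will choose $f_{\ref{reduction}}(a,b,c,\ell,k,s)$ large enough at the end to absorb all losses from the reductions below. Given $M$ satisfying the hypotheses, I select a minor $N$ of $M$ of minimum rank subject to $r(N) \ge k$ and $\elem(N) > p(r(N))$; such an $N$ exists since $M$ itself has these properties.

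Second, I would prove $N$ is vertically $s$-connected. Suppose instead that there is a vertical $j$-separation $(X,Y)$ of $N$ with $j < s$, so $r(X) + r(Y) - r(N) < j$ and $\min(r(X),r(Y)) \ge j \ge 1$. Every point of $N$ that lies outside $\cl_N(X) \cap \cl_N(Y)$ (a flat of rank less than $j$) belongs to exactly one of $N|X$ or $N|Y$, giving $\elem(N) \le \elem(N|X) + \elem(N|Y) + O(\ell^{j})$. Since $a > 0$ and $r(X) + r(Y) < r(N) + j$, a short algebraic calculation shows $p(r(X)) + p(r(Y)) < p(r(N)) + O_{a,b,c,s}(1)$, so one of $\elem(N|X) > p(r(X))$ or $\elem(N|Y) > p(r(Y))$ holds. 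Appropriately contracting a rank-$j$ flat inside the lighter side yields a smaller-rank minor with excess density, contradicting minimality of $r(N)$, provided $r(N) - s \ge k$; this is built into the choice of $f_{\ref{reduction}}$.

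Third, I would split on a dichotomy. Let $S$ be the set of $e \in E(N)$ with $\elem(N) - \elem(N/e) > p(r(N)) - p(r(N)-1)$. By minimality of $r(N)$, each $N/e$ satisfies $\elem(N/e) \le p(r(N) - 1)$. If $S$ contains an $s$-element independent set, then $N$ satisfies conclusion (2) and we are done. Otherwise, $S \subseteq \cl_N(T)$ for some independent set $T$ with $|T| < s$: outside this low-rank flat every element contracts with only a bounded density drop, a uniform regularity statement.

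Finally, I would derive conclusion (1) from this regularity, which I expect to be the main obstacle. The combination of (i) vertical $s$-connectivity, (ii) the excluded-$U_{2,\ell+2}$-minor hypothesis, (iii) quadratic density $\elem(N) > p(r(N))$, and (iv) a uniform bound on per-element density drops outside a small flat is precisely the setup handled by the Geelen-Kung-Whittle growth rate theorem and its refinements. Using the framework of $(\mcf{O},m,h)$-stacks from the Geelen-Nelson-Whittle program, one argues that a matroid in this regime without a long independent set of density-dropping elements must contain a spanning clique restriction, or else one can locate a stack that would contradict the density hypothesis. The bulk of the proof and the explicit dependence of $f_{\ref{reduction}}$ on $\ell, k, s$ lives in this final structural step, where deep machinery (frame templates, projective geometry over small fields, clique-extension arguments) is needed; the earlier connectivity and density-drop arguments are comparatively routine.
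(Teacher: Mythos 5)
The paper does not prove this statement at all: it is quoted verbatim from Geelen, Nelson, and Walsh \cite{GNW2024} (their Theorem 6.3), so the only question is whether your sketch stands on its own as a proof. It does not. Your top-level skeleton (minimum-rank dense minor, a connectivity reduction, the dichotomy on whether the critical elements contain an $s$-element independent set) is a reasonable reconstruction of how such results are organized, but two steps have genuine gaps.

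First, the vertical-connectivity-by-minimality argument is incomplete. Given a vertical $j$-separation $(X,Y)$ of $N$ with $j<s$, your convexity estimate does show that one of $N|X$, $N|Y$ has more than $p$ of its rank many points, but that side may have rank smaller than $k$: a rank-$j$ matroid with no $U_{2,\ell+2}$-minor can have as many as $\frac{\ell^{j}-1}{\ell-1}$ points, which easily exceeds $p(j)$, so the dense side being a low-rank one contradicts nothing and minimality of $r(N)$ gives no contradiction. Your proviso ``provided $r(N)-s\ge k$'' constrains $r(N)$, not the rank of the dense side, so it does not repair this; handling exactly these unbalanced separations is one of the places where the real proof has to work. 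Second, and more seriously, your final step --- from ``every element outside a flat of rank $<s$ contracts with bounded density drop'' to ``some minor with $\elem(N)>p(r(N))$ has a spanning clique \emph{restriction}'' --- is precisely the content of the theorem being proved. The growth-rate machinery you invoke yields only a large clique \emph{minor} (this is what \Cref{clique minor} gives, and the present paper uses it separately), and upgrading a clique minor to a spanning clique restriction in a minor that still beats $p$ is the hard structural work of \cite{GNW2024}; citing ``frame templates, projective geometries, clique-extension arguments'' as a black box here is assuming the quoted result rather than proving it. (A smaller point: your use of minimality to get $\elem(N/e)\le p(r(N)-1)$ also needs $r(N)-1\ge k$, which should be said.) As written, the proposal is an outline whose decisive step is missing.
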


\Cref{reduction} says that if there is a large-rank counterexample $M$ to \Cref{thm: main result}, then there exists a counterexample $N$ with either a spanning clique restriction or structure given by outcome \ref{GNW2}, which will lead to a large set of $2$-local-critical points.
To deal with outcome \ref{GNW2}, we must take the critical points given by outcome \ref{GNW2} and contract them into the span of a clique minor of $N$.
We do this with the following theorem from~\cite[Corollary 8.6]{GN}, which says that any bounded-rank set in a matroid with large vertical connectivity can be contracted into the span of a clique minor.
% Corollary 8.6

\begin{theorem}[Geelen and Nelson \cite{GN}] \label{connectivity}
There is a function $f_{\ref{connectivity}}\colon \mbb{Z}^2 \to \mbb{Z}$ such that the following holds for all integers $s,m,\ell$ with $m > s > 1$ and $\ell \ge 2$:

\smallskip

\noindent If $M$ is a vertically $s$-connected matroid with an $M(K_{f_{\ref{connectivity}}(m, \ell) + 1})$-minor and no $U_{2, \ell + 2}$-minor, and $X \subseteq E(M)$ satisfies $r_M(X) < s$, then $M$ has a rank-$m$ minor $N$ with an $M(K_{m+1})$-restriction such that $X \subseteq E(N)$ and $N|X = M|X$.
\end{theorem}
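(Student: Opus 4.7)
The plan is to start from the given $M(K_{n+1})$-minor with $n = f_{\ref{connectivity}}(m, \ell)$, presented as $M_0 = M/C \backslash D$ for some disjoint $C, D \subseteq E(M)$, and shrink it to a rank-$m$ clique minor that contains $X$ while preserving $M|X$. The first phase normalizes the presentation so that $X$ is disjoint from $C \cup D$: because $r_M(X) < s$ and $M$ is vertically $s$-connected, no vertical separation of order less than $s$ can isolate a portion of $X$, so standard matroid connectivity moves (of the sort developed throughout \cite{GN}) permit exchanging any element of $X \cap (C \cup D)$ for an equivalent element outside $X$, yielding a clique minor $M_0'$ with $X \subseteq E(M_0')$ and $M_0'|X = M|X$.

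The second phase chooses a set of frame elements of the clique to contract. The structural fact here is that every basis $F$ for $M(K_{n+1})$ is a frame, and contracting any subset $F' \subseteq F$ yields a simplified minor isomorphic to $M(K_{n+1-|F'|})$. Thus the strategy is to identify a frame $F$ for the clique restriction of $M_0'$ and a subset $F' \subseteq F$ of size $n - m$ such that $X \subseteq \cl_{M_0'/F'}(F - F')$ and no element of $X$ becomes parallel to an element of the surviving clique in $M_0'/F'$. Because $r_{M_0'}(X) \le r_M(X) < s$, the flat spanned by $X$ is narrow, and a counting argument using the large quantity $n - m$ shows such an $F'$ exists; the no-$U_{2, \ell+2}$-minor hypothesis is what caps how many elements of the clique can land in any one parallel class after contraction, preventing accidental parallelism with $X$.

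The main obstacle is reconciling the two requirements of the second phase: shrinking the clique by contraction while preserving $M|X$ on the nose. A single bad contraction that makes some $x \in X$ parallel to a clique element would force a change in $M|X$, so one needs a careful Hall-type or greedy selection of $F'$ that both covers the flat spanned by $X$ in $M_0'/F'$ and avoids every parallel class that any element of $X$ could fall into. The growth of $f_{\ref{connectivity}}(m, \ell)$ should be chosen precisely to absorb the $m$-dependent overhead from covering the flat of $X$ and the $\ell$-dependent overhead from the bounded parallel-class sizes guaranteed by the $U_{2,\ell+2}$-exclusion. I would expect the bulk of the effort to be this combinatorial selection argument, carried out via the clique-cleaning and density tools already prevalent in the extremal matroid theory program of \cite{GN,GNW2024}.
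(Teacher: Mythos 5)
This statement is not proved in the paper: it is cited verbatim as \cite[Corollary 8.6]{GN}, and the present authors offer no argument for it. There is therefore no ``paper's own proof'' against which to compare your attempt; what you have produced is an independent sketch of a result that this paper deliberately treats as a black box.

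Evaluated on its own terms, your sketch has two genuine gaps, both at the points you label as ``standard.'' In the first phase, the assertion that vertical $s$-connectivity lets you ``exchange any element of $X \cap (C \cup D)$ for an equivalent element outside $X$'' is not a known elementary move. If an element of $X$ lies in $C$ (contracted) or $D$ (deleted), there is in general no single-element replacement that restores $X$ to the minor while keeping the minor a clique; the actual Geelen--Nelson argument builds this up through a substantial development of connectivity reductions (their Sections 7--8), not through local swaps. In the second phase, you correctly identify the danger (elements of $X$ becoming parallel to clique elements, or to each other, after contracting part of a frame), but the ``Hall-type or greedy selection'' and the claim that $U_{2,\ell+2}$-exclusion ``caps how many elements of the clique can land in any one parallel class'' are asserted rather than argued, and as stated the parallelism requirement is also slightly off: the conclusion $N|X = M|X$ forbids elements of $X$ from becoming loops or parallel \emph{to each other}, but says nothing about parallelism with clique elements, which is allowed. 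A careful version would need to control exactly which identifications are fatal. In short, your outline is directionally sensible but leans on exactly the steps where the real work in \cite{GN} happens, and it does not reproduce or replace that work.
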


In order to apply \Cref{connectivity}, we must guarantee the existence of a large clique minor.
The existence of such a minor is given by the following theorem from \cite[Theorem 2.1]{GW}, which says that if a matroid has size greater than a linear function of its rank, then the matroid has a large clique minor.

\begin{theorem}[Geelen and Whittle \cite{GW}] \label{clique minor}
There is a function $f_{\ref{clique minor}} \colon \mbb{Z}^2 \to \mbb{Z}$ such that the following holds for all integers $n, \ell \ge 2$:
If $M$ is a matroid with no $U_{2, \ell+2}$-minor, and $\elem(M) > f_{\ref{clique minor}}(n, \ell) \cdot r(M)$, then $M$ has an $M(K_{n+1})$-minor.
\end{theorem}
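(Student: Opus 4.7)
The plan is to proceed by induction on $n$. For the base case $n = 2$, the cycle matroid $M(K_3)$ is isomorphic to $U_{2,3}$, so any simple matroid of rank at least $2$ with at least three points contains $M(K_3)$ as a restriction; this handles the base case with a modest constant $f_{\ref{clique minor}}(2, \ell) = 2$.

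For the inductive step, assume $f_{\ref{clique minor}}(n-1, \ell)$ is defined and pick $C = C(n, \ell)$ large enough. Given a simple matroid $M$ with no $U_{2,\ell+2}$-minor and $\elem(M) > C \cdot r(M)$, I would first seek a density-preserving contraction. From the line-counting identity used in the proof of \Cref{lem: bounding k-local-critical points},
\[
\elem(M) - \elem(M/e) \;=\; 1 + \sum_{L \in \mcf{L}_M(e)} (|L| - 2) \;\le\; 1 + (\ell - 1)\cdot \elem(M/e),
\]
since each long line through $e$ has at most $\ell + 1$ points and corresponds to a distinct point of $M/e$. Rearranging gives $\elem(M/e) \ge (\elem(M) - 1)/\ell$ for every $e$. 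If $C$ is chosen large enough (roughly $C \ge \ell \cdot f_{\ref{clique minor}}(n-1, \ell) + 1$), then after simplification $M/e$ satisfies the density hypothesis of the inductive step, so $M/e$ contains an $M(K_n)$-minor $N$.

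The main task is then to lift $N$ to an $M(K_{n+1})$-minor of $M$. Writing the minor $N$ via a branch-set model $(X_1, \dots, X_n)$ in $M/e$ together with bridging elements, the goal is to produce a new branch-set that plays the role of an extra vertex in $K_{n+1}$. This lift is not automatic: if $e$ happens to lie in the closure of just one of the $X_i$, then contracting $e$ does not correspond to deleting a vertex of a supergraph. To handle this, I would refine the previous step by choosing $e$ whose removal still leaves a dense matroid \emph{and} which interacts with many branch-sets. Here the vertical connectivity machinery of \Cref{connectivity} is the natural tool: after passing to a vertically connected minor of bounded rank deficit, low-rank subsets can be contracted into the span of a clique, so a careful iterative choice of $e$ together with the structural hypothesis will place $e$ in general position with respect to the branch-set decomposition.

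The main obstacle is precisely this lifting step. The density argument alone loses a factor of $\ell$ with each contraction, yielding $f_{\ref{clique minor}}(n, \ell)$ on the order of $\ell^n$, but the additional demand of a liftable clique minor forces a further blow-up, giving a tower-type dependence on $n$. Setting up the iteration so that one simultaneously preserves density, maintains vertical connectivity, and guarantees a branch-set model admitting a new vertex from $M$ is the technical heart of the argument, and is where I expect to spend most of the effort.
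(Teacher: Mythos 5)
The paper does not prove this statement; it is imported verbatim from Geelen and Whittle as \cite[Theorem 2.1]{GW} and used as a black box. So there is no in-paper argument to compare against, and your proposal must stand or fall on its own.

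Your sketch identifies a plausible-looking strategy but has a genuine gap at its core, and you say as much yourself. The density bookkeeping is fine: every line has at most $\ell + 1$ points since $U_{2,\ell+2}$ is excluded, so
\[
\elem(M) - \elem(M/e) \;=\; 1 + \sum_{L \in \mcf{L}_M(e)} (|L| - 2) \;\le\; 1 + (\ell - 1)\,\elem(M/e),
\]
hence $\elem(M/e) \ge (\elem(M) - 1)/\ell$, and choosing $f_{\ref{clique minor}}(n,\ell)$ on the order of $\ell\cdot f_{\ref{clique minor}}(n-1,\ell)$ lets $M/e$ inherit the density hypothesis for $n-1$. The problem is what this buys you. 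The inductive conclusion is only that $M/e$ has an $M(K_n)$-minor, and every minor of $M/e$ is already a minor of $M$; so without a genuine lifting argument, all you recover is that $M$ has an $M(K_n)$-minor, which is no improvement. Upgrading to $M(K_{n+1})$ requires producing, inside $M$, a new branch set together with $n$ independent elements linking it to the existing $n$ branch sets, and a single contracted element $e$ simply cannot supply that on its own. Moreover, your proposed rescue via \Cref{connectivity} is circular: that theorem takes the existence of an $M(K_{f_{\ref{connectivity}}(m,\ell)+1})$-minor as a \emph{hypothesis}, so it cannot be used to manufacture the clique minor you are trying to construct. The Geelen--Whittle argument does not proceed by this kind of single-element induction; it reduces to a suitably vertically connected minor where the density can be exploited structurally, and builds the clique minor directly rather than by lifting a smaller one. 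As written, your proposal is a heuristic outline rather than a proof.
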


The following result implies \Cref{thm: main result}.

\begin{theorem} \label{thm: final main}
There is an integer $r_{\ref{thm: final main}}$ such that the following holds for each integer $r \ge r_{\ref{thm: final main}}$:
If $\mbf{A}$ is a rank-$r$, $3$-modular matrix, then $\mbf{A}$ has at most $\binom{r + 1}{2} + 2(r-1)$ pairwise non-parallel columns.
\end{theorem}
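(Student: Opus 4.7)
The plan is to derive this theorem from \Cref{prop: the spanning clique case} by using \Cref{reduction} to reduce to the spanning clique case, and to handle the remaining connectivity case via \Cref{clique minor} and \Cref{connectivity}. Suppose for a contradiction that some rank-$r$, $3$-modular matrix $\mbf{A}$ has more than $p(r) := \binom{r+1}{2} + 2(r-1) = \tfrac{1}{2} r^2 + \tfrac{5}{2} r - 2$ pairwise non-parallel columns for $r$ as large as we need, and let $M := M[\mbf{A}] \in \mcf{M}_3$. Since $U_{2,7} \notin \mcf{M}_3$, we may take $\ell = 5$. Set $k = s := 9\binom{94}{2} + 2$, the threshold appearing in \Cref{prop: the spanning clique case}. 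For $r \ge f_{\ref{reduction}}(\tfrac{1}{2},\tfrac{5}{2},-2,5,k,s)$, \Cref{reduction} yields a minor $N \in \mcf{M}_3$ of $M$ with $r(N) \ge k$, $\elem(N) > p(r(N))$, and satisfying either outcome \ref{GNW1} (a spanning clique restriction) or outcome \ref{GNW2} (vertical $s$-connectivity together with an $s$-element independent set $S$ of $2$-critical elements, since $p(x) - p(x-1) = x+2$).

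If outcome \ref{GNW1} holds, then $r(N) \ge 9\binom{94}{2} + 2$ and \Cref{prop: the spanning clique case} immediately gives $\elem(N) \le p(r(N))$, contradicting the strict inequality. So we focus on outcome \ref{GNW2}. The idea is to contract the critical set $S$ into the span of a large clique minor while preserving its criticality. Set $m := 9\binom{94}{2} + 2$ and $n := f_{\ref{connectivity}}(m, 5)$. Because $\elem(N) > p(r(N))$ grows quadratically, once $r(N) > f_{\ref{clique minor}}(n,5)$, \Cref{clique minor} produces an $M(K_{n+1})$-minor of $N$. Now pick any $e_0 \in S$ and set $X := S - e_0$, so $r_N(X) = s - 1 < s$ since $S$ is independent. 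Applying \Cref{connectivity} with this $X$ and with the rank parameter $m$ yields a rank-$m$ minor $N^*$ of $N$ that has a spanning $M(K_{m+1})$-restriction, contains $X$, and satisfies $N^*|X = N|X$. In particular, $N^*$ is a simple rank-$m$, $3$-modular matroid with a spanning clique, so \Cref{prop: the spanning clique case} gives that $N^*$ has at most $9\binom{94}{2}$ many $2$-local-critical elements.

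The contradiction would come from showing that every element of $X$ remains $2$-local-critical in $N^*$: then $N^*$ would have at least $|X| = 9\binom{94}{2} + 1$ many $2$-local-critical elements, one more than \Cref{prop: the spanning clique case} allows, and the theorem follows. Since $N^*|X = N|X$, every long line of $N$ inside $X$ persists in $N^*$, and since contraction of matroid elements can only merge or create new long lines through a given point (never destroying one, except by contracting along that line itself), the $2$-criticality of each $e \in X$ in $N$ should propagate to $2$-local-criticality in $N^*$ by a monotonicity argument on the long-line structure. Making this rigorous is the main obstacle: one must carefully track how the contracted and deleted elements produced by \Cref{connectivity} interact with the long lines through the elements of $X$, invoking the exclusion of a $U_{2,7}$-minor to cap the number of points on each such line and thereby control the interactions. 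Once this step is in place, the exponent $|X| > 9\binom{94}{2}$ delivers the required contradiction and completes the proof.
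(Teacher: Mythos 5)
Your overall architecture matches the paper's: apply \Cref{reduction} with $\ell = 5$ and $p(x) = \tfrac12 x^2 + \tfrac52 x - 2$, kill the spanning-clique outcome via \Cref{prop: the spanning clique case}, and in the vertically connected outcome push the critical set $S$ into the span of a clique minor via \Cref{clique minor} and \Cref{connectivity}. However, there is a genuine gap exactly where you flag one, and it cannot be patched by the monotonicity heuristic you sketch. Taking $X := S - e_0$ and applying \Cref{connectivity} only guarantees $N^*|X = N|X$, which says nothing about long lines through elements of $X$ that exit $X$ — and those are precisely the lines that witness $2$-criticality. The minor $N^*$ is produced by deletions as well as contractions, and deletions freely destroy long lines; so there is no reason for elements of $X$ to remain $2$-local-critical in $N^*$. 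Your suggestion that one could "cap the number of points on each such line" via $U_{2,7}$-exclusion does not help: the problem is not long lines with too many points but long lines losing points, which is what collapses $\elem(N^*) - \elem(N^*/e)$.

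The paper solves this by first localizing the criticality. It proves a claim (\Cref{critical} inside the proof) that for each $f \in S$ there is a set $\mcf{X}_f$ of long lines of $N$ through $f$, with $r_N(\cup \mcf{X}_f) \le 19$, such that $f$ is already $2$-local-critical in the restriction $N|\cup\mcf{X}_f$. The rank-$19$ bound comes from the fact that $3$-modular matroids contain no rank-$7$ spike, which caps circuit lengths at $6$ in a derived corank-$3$ matroid; this is where the $\mcf M_3$-specific structure enters, not $U_{2,7}$-exclusion. The set fed to \Cref{connectivity} is then $X := S_1 \cup \bigcup_{f \in S_1}(\cup\mcf{X}_f)$ for a chosen $S_1 \subseteq S$ with $|S_1| = 9\binom{94}{2} + 1$, so $r_N(X) \le 19|S_1| < s_1$. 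Since the witnessing long lines now lie entirely inside $X$, the guarantee $N_1|X = N|X$ from \Cref{connectivity} preserves $2$-local-criticality of every $f \in S_1$, and \Cref{prop: the spanning clique case} is contradicted. This also forces the connectivity parameter to be larger than yours: the paper takes $s_1 = 171\cdot\binom{94}{2} + 20$ (so that $r_N(X) < s_1$), whereas your $s = 9\binom{94}{2} + 2$ is too small to accommodate the witnessing sets. To complete your proof, you would need to add the localization claim, enlarge $s$ accordingly, and include the bounded-rank witnesses in $X$.
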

\proof
Define $p:\mbb{R}\to \mbb{R}$ to be
\[
p(x) := \binom{x + 1}{2} + 2\left(x - 1\right).
\]
Set $\ell_1 := 5$ and $s_1 := 171\cdot \binom{94}{2} + 20$.
Let $k_1 \in \mbb{Z}$ be such that $k_1 \ge 9\cdot\binom{94}{2} + 2$ and $p(x) > f_{\ref{clique minor}}(f_{\ref{connectivity}}(s_1 + 1, \ell_1), \ell_1) \cdot x$ for all $x \ge k_1$.
Define
\[
r_{\ref{thm: final main}} := f_{\ref{reduction}}\left(1/2,\ 5/2,\ -2,\ \ell_1,\ k_1,\ s_1\right).
\]

Suppose $\mbf{A}$ is a rank-$r$, $3$-modular matrix with more than $\binom{r + 1}{2} + 2(r-1)$ pairwise non-parallel columns, where $r \ge r_{\ref{thm: final main}}$.
The vector matroid $M$ of $\mbf{A}$ has no $U_{2, \ell_1 + 2}$-minor by \cite[Proposition 8.10]{GNW2024}.
By \Cref{reduction} with 
\[
(a, b, c, \ell, k, s) = \left(1/2,\ 5/2,\ -2,\ \ell_1,\ k_1,\ s_1\right),
\]
there is a simple minor $N$ of $M$ such that $\elem(N) > p(r(N))$, $r(N) \ge k_1$, and either

\smallskip
\begin{enumerate}[label=(\alph*), leftmargin = *, noitemsep]
\item\label{sprc1} $N$ has a spanning clique restriction, or
\item\label{sprc2} $N$ is vertically $s_1$-connected and has an $s_1$-element independent set $S$ such that $\elem(N)-\elem(N/f)>p(r(N))-p(r(N)-1)$ for each $f\in S$.
\end{enumerate}
\smallskip

The matroid $N$ is in $\mcf{M}_{3}$ because $\mcf{M}_{3}$ is minor-closed \cite[Proposition 8.10]{GNW2024}.
If outcome \ref{sprc1} holds, then $\si(N)$ is a simple $3$-modular matroid with rank at least $9\cdot\binom{94}{2} + 2$, a spanning clique restriction, and more than $p(r(\si(N)))$ elements, which contradicts \Cref{prop: the spanning clique case}.
Thus, outcome \ref{sprc2} holds, and
\[
\elem(N) - \elem(N/f) > r(N) + 2 \quad \forall f \in S.
\]
The following claim isolates the `criticality' of $f$ within a bounded set.

\begin{claim} \label{critical}
For each $f \in S$, there is a set $\mcf{X}_f$ of long lines of $N$ through $f$ such that $\elem(N|\cup \mcf{X}_f) - \elem((N|\cup \mcf{X}_f)/f) > r(N| \cup \mcf{X}_f) + 2$ and $r_N(\cup \mcf{X}_f) \le 19$.
In particular, $f$ is a $2$-local-critical element of $N$.
\end{claim}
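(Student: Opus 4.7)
The ``in particular'' conclusion is immediate from the formula
\[
\elem(M)-\elem(M/e) = 1 + \sum_{L\in\mcf{L}_M(e)}(|L|-2),
\]
recorded just above the claim. Applying this to the restriction $N_f := N|\bigcup\mcf{L}_N(f)$, one has $\mcf{L}_{N_f}(f) = \mcf{L}_N(f)$ and $r(N_f)\le r(N)$, so $\elem(N_f)-\elem(N_f/f) = \elem(N)-\elem(N/f) > r(N)+2 \ge r(N_f)+2$, i.e.\ $f$ is $2$-local-critical.

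For the harder half of the claim, I would first extract the inequality $\sum_{L\in\mcf{L}_N(f)}(|L|-2) > r(N)+1$ from the criticality hypothesis, and use that $N\in\mcf{M}_3$ has no $U_{2,7}$-minor (\cite[Proposition 8.10]{GNW2024}) to conclude that every long line of $N$ has at most $6$ points, so each summand lies in $\{1,2,3,4\}$. The plan is then to choose $\mcf{X}_f \subseteq \mcf{L}_N(f)$ minimizing $\rho^* := r_N(\bigcup\mcf{X}_f)$ subject to $\sum_{L\in\mcf{X}_f}(|L|-2) > \rho^* + 1$, breaking ties by minimizing $|\mcf{X}_f|$. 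Setting $\phi := \sum_{L\in\mcf{X}_f}(|L|-2)-\rho^*\ge 2$, the claim reduces to proving $\rho^*\le 19$.

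The key local analysis is the following. Removing any single line $L \in \mcf{X}_f$ either drops the rank to $\rho^*-1$ or preserves it. Minimality of $\rho^*$ in the first case forces the resulting surplus $\phi-(|L|-3)\le 1$, giving $|L|\ge\phi+2\ge 4$ and $\phi\le|L|-2\le 4$; minimality of $|\mcf{X}_f|$ in the second case forces $\phi-(|L|-2)\le 1$, giving $|L|\ge\phi+1\ge 3$ and $\phi\le|L|-1\le 5$. Thus $\phi\le 5$, every rank-essential line has at least $4$ points, and the lower bound $\sum(|L|-2) \ge |\mcf{X}_f| + \#\{\text{rank-essential lines}\}$ combined with $\sum(|L|-2)=\rho^*+\phi$ bounds the number of essential lines by a small constant. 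The simplification $\si(N|\bigcup\mcf{X}_f / f)$ is then a simple rank-$(\rho^*-1)$ matroid in $\mcf{M}_3$ with only a few coloops, no line of length greater than $6$, and only a small excess of elements over its rank; this structural rigidity, combined with the density bounds available for $\mcf{M}_3$, forces $\rho^*\le 19$.

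The main obstacle I anticipate is converting the local bounds on $\phi$ and the sizes of essential lines into the global rank bound $\rho^*\le 19$. The essential lines alone span only rank $O(\phi)$, and the remaining lines are redundant, so the difficulty is to control how much extra rank the redundant lines can carry while still keeping $\phi$ small. This typically requires an exchange argument replacing short essential lines by longer redundant ones, or a direct case analysis of small-excess, no-coloop simple matroids in $\mcf{M}_3$ using the density and extremal bounds available in that class.
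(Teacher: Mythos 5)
Your derivation of the ``in particular'' conclusion is correct, and in fact cleaner than the paper's: $f$ is $2$-critical in $N$ by hypothesis, and --- as noted just before Lemma~\ref{lem: bounding k-local-critical points} --- $k$-critical always implies $k$-local-critical since passing to the union of long lines through $f$ can only lower the rank.

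The main part of your argument has a genuine gap, which you flag yourself. The local bounds from your minimization ($\phi\le 5$, essential lines of length $\ge 4$, at most $\phi+1$ essential lines) do not control the rank contributed by the \emph{non}-essential lines: the fact that removing any single non-essential line leaves the rank unchanged does not mean the non-essential lines collectively carry little rank, so ``$\phi$ small $\Rightarrow$ few essential lines $\Rightarrow\rho^*$ small'' does not close. The idea you are missing is to pass to the contraction and to replace the $U_{2,7}$-exclusion by a spike exclusion. The paper sets $N_1 := \si\bigl((N|\cup\mcf{L}_N(f))/f\bigr)$, a simple matroid whose elements correspond to the long lines through $f$; the $2$-criticality of $f$ forces $N_1$ to have corank at least three. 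Because $\mcf{M}_3$ contains no rank-$7$ spike (\cite[Proposition 2.1]{Paat-Stallknecht-Walsh-Xu-2024}), every circuit of $N_1$ has at most $6$ elements: a circuit $C$ of length $\ge 7$, together with $f$ and one additional point on each line corresponding to $C$, yields a spike of rank at least $7$ as a restriction of $N$. One then chooses a restriction $N_2$ of $N_1$ with corank exactly $3$ and no coloops --- a union of three fundamental circuits, hence $|E(N_2)|\le 18$ --- and lets $\mcf{X}_f$ be the long lines through $f$ corresponding to $E(N_2)$, giving $r_N(\cup\mcf{X}_f)=r(N_2)+1\le 19$ with surplus at least $3$. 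The decisive step is thus to bound circuits of the simplified contraction via the spike exclusion rather than line lengths via $U_{2,7}$, and to exploit the corank-$3$ structure of $N_1$ rather than a minimizing choice of $\mcf{X}_f$.
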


\begin{cpf}
Let $\mcf{L}_N(f)$ be the set of long lines of $N$ that contain $f$, and set $N_1 := \si(N|\cup \mcf{L}_N(f))/f)$.
The matroid $N_1$ has corank at least three because $\elem(N) - \elem(N/f) > r(N) + 2$.

We claim that each circuit of $N_1$ has at most $6$ elements.
Assume to the contrary that there exists a circuit $C$ of $N_1$ with $|C| \ge 7$.
Let $T \subseteq E(N)$ consist of $C$, $f$, and one element in $\cl_N(\{t,c\}) - \{t,c\}$ for each $c \in C$.
It follows that $N|T$ is rank-$7$ spike; see \cite[\S 2.1]{Paat-Stallknecht-Walsh-Xu-2024}.
Moreover, $N|T$ is $3$-modular.
However, it is shown in \cite[Proposition 2.1]{Paat-Stallknecht-Walsh-Xu-2024} that a $\Delta$-modular matroid cannot contain a rank-$(2\Delta+1)$ spike. 
Thus, we have a contradiction.
Therefore, each circuit of $N_1$ has at most $6$ elements.

Let $N_2$ be a restriction of $N_1$ with corank exactly three and no coloops.
One can obtain $N_2$ by choosing a basis $B$ of $N_1$ along with three elements $e_1, e_2, e_3$ outside of the basis, and then deleting all coloops of the resulting matroid.
Then $E(N_2)$ is the union of the unique circuits of $B \cup e_i$ for $i \in [3]$.
As each circuit has at most $6$ elements, it follows that $|E(N_2)| \le 18$.
Let $\mcf{X}_f$ be the set of long lines through $f$ and some element of $N_2$.
It follows that $r(N|\cup \mcf X_f) = r(N_2) + 1 \le 19$, and 
\[\
\elem(N|\cup \mcf{X}_f) - \elem((N|\cup \mcf{X}_f)/f) \ge 1 + |N_2| \ge  1 + r(N_2) + 3 \ge r(N| \cup \mcf{X}_f) + 3
\]
Thus, $\mcf{X}_f$ has the desired properties.
Since $f$ is $2$-local-critical in $N|\cup \mcf{X}_f$, it is $2$-local-critical in $N$ as well.
\end{cpf}

\smallskip

By \Cref{clique minor} and the definition of $r_1$, we see that  $M(K_{f_{\ref{connectivity}}(s_1 + 1, \ell_1) + 1})$ is a minor of $N$.
Let $S_1 \subseteq S$ with $|S_1| = 9 \cdot \binom{94}{2} + 1$.
Set $X := S_1 \cup (\cup_{f \in S_1}(\cup \mcf{X}_f))$.
\Cref{critical} implies that 
\[
r_N(X) \le 19 \cdot |S_1| \le 171\cdot \binom{94}{2} + 19 < s_1.
\]

Applying \Cref{connectivity} with $(s, m, \ell) = (s_1, s_1 + 1, \ell_1)$, the matroid $N$ has a minor $N_1$ with a spanning clique restriction such that $X \subseteq E(N_1)$ and $N_1|X = N|X$.
The set $S_1$ is an independent set of $9\cdot\binom{94}{2} + 1$ many $2$-local-critical elements of $N_1$ because $N_1|X = N|X$.
However, $N_1$ has a spanning clique restriction, which contradicts \Cref{prop: the spanning clique case}.
\endproof

%%%%%%%%%%%%%%%%%%%%%%%%%%%%%%%%%%%%%%%%%%%%%
%%%%%%%%%%%%%%%%%%%%%%%%%%%%%%%%%%%%%%%%%%%%%
\section{A proof of Theorem \ref{thm: exponentially many extremal matroids}} \label{sec: the maximum-sized matrices}
%%%%%%%%%%%%%%%%%%%%%%%%%%%%%%%%%%%%%%%%%%%%%
%%%%%%%%%%%%%%%%%%%%%%%%%%%%%%%%%%%%%%%%%%%%%

In this section we show that the number of simple rank-$r$, $\Delta$-modular matroids with $\binom{r+1}{2} + (\Delta - 1)(r-1)$ elements is at least exponential in $\sqrt{\Delta}$, proving Theorem \ref{thm: exponentially many extremal matroids}.
At a high level, our proof works as follows:
We show that for every partition $\mbfs{\lambda}$ of $\Delta - 1$ and every sufficiently large integer $r$, there is a $\Delta$-modular matrix $\mbf{A}(\Delta, \mbfs{\lambda}, r)$ with $r$ rows and $\binom{r+1}{2} + (\Delta - 1)(r - 1)$ pairwise non-parallel columns; see \Cref{prop:in-betweenFamilies}.
Afterwards, we show that the vector matroids of these matrices are non-isomorphic; see \Cref{prop: non-isomorphic extremal matroids}.
Lee et al.\ \cite{LPSX2021} also define a rank-$r$, $\Delta$-modular matrix $\mbf{A}(\Delta,r)$ with $\binom{r+1}{2} + (\Delta - 1)(r - 1)$ pairwise non-parallel columns.
For completeness, we show that each $M[\mbf{A}(\Delta, \mbfs{\lambda}, r)]$ is also not isomorphic to $M[\mbf{A}(\Delta, r)]$.
Overall, this will prove \Cref{thm: exponentially many extremal matroids}.

First, we define the matrices used in the proof.
A \textbf{partition} of $n \in \mbb{Z}_{\ge 1}$ is a non-increasing sequence of positive integers that sum to $n$.
\begin{definition}\label{eqADLr}
Let $\Delta \in \mbb{Z}_{\ge 2}$, let $\mbfs{\lambda} = (\lambda_i)_{i=1}^m$ be a partition of $\Delta - 1$, and let $r \in \mbb{Z}$ satisfy $r \ge m + 1$.
Define $\mbf{A}(\Delta, \mbfs{\lambda}, r)$ to be the integer-valued matrix with the following columns:
\begin{enumerate}[label = {{\it(A-\arabic*)}}, leftmargin = *, noitemsep]
    \item\label{in-between:col1} $\mbf{e}_i$ for all $i \in [r]$.
    
    \item\label{in-between:col2} $\mbf{e}_i - \mbf{e}_j$ for all $1 \le i < j \le r$.
    
    \item\label{in-between:col3} $k\mbf{e}_1 + \mbf{e}_{i+1}$ for all $i \in [m]$ and $k \in [\lambda_i]$.
    
    \item\label{in-between:col4} $k\mbf{e}_1 + \mbf{e}_{i+1} - \mbf{e}_j$ for all $i \in [m]$ and $k \in [\lambda_i]$ and $j \in [r] - \{1, i+1\}$.
\end{enumerate}
\end{definition}
\noindent See \Cref{figure: sharp matrices} for illustrations of $\mbf{A}(3,2,r)$ and $\mbf{A}(3,1+1,r)$.

The following matrix is defined in Lee et al.\ \cite{LPSX2021}.

\begin{definition}\label{eqADr}
Let $\Delta \in \mbb{Z}_{\ge 1}$ and $r \in \mbb{Z}_{\ge 2}$.
Define $\mbf{A}(\Delta, r)$ to be the integer-valued matrix with columns \ref{in-between:col1}, \ref{in-between:col2}, and
\begin{enumerate}[label = {{\it(A-\arabic*)}}, leftmargin = *, noitemsep]
\setcounter{enumi}{4}
    \item\label{in-between:col5} $k\mbf{e}_1 - \mbf{e}_{i}$ for all $i \in \{2, \dotsc, r\}$ and $k \in \{2, \dots, \Delta\}$.
\end{enumerate}
\end{definition}

See \Cref{figure: sharp matrices} for an illustration of $\mbf{A}(3,r)$.

\begin{figure}[ht]
\[
\begin{array}{@{\hskip .25 cm}r@{\hskip .1 cm}c@{\hskip .1 cm}l}
		\mbf{A}(3,r) &=& 
        \begin{bNiceArray}{@{\hskip .05cm}ccc|ccc|ccc|ccc@{\hskip .05cm}}[margin]
	    \Block[c]{3-3}{\mbf{I}_{r}}  & & & \Block[c]{3-3}{\mbf{D}_{r}} & &  & 2 & \cdots & 2 & 3 & \cdots & 3 \\
			   \cline{7-12}
			   &  &  &&&  & \Block[c]{2-3}{-\mbf{I}_{r-1}}  &  &  & \Block[c]{2-3}{-\mbf{I}_{r-1}} &  &  \\
			   &  &&&  &  &  &  &  &  &  & 
		\end{bNiceArray}\\[.8cm]
		\mbf{A}(3, 2, r) &=&
        \begin{bNiceArray}{@{\hskip .05cm}ccc|ccc|ccc|ccc|cc@{\hskip .05cm}}[margin]
			   \Block[c]{4-3}{\mbf{I}_{r}} &  &  & \Block[c]{4-3}{\mbf{D}_{r}}  &  &   & 1 & \cdots & 1 & 2 & \cdots & 2 & 1 & 2\\
			     &  &  &  &  &    & 1 & \cdots & 1 & 1 & \cdots & 1 & 1 & 1 \\
			\cline{7-14}
			   &  &  &  & &  &  \Block[c]{2-3}{-\mbf{I}_{r-2}} & & & \Block[c]{2-3}{-\mbf{I}_{r-2}} &   & & \Block[c]{2-1}{\mbf{0}} & \Block[c]{2-1}{\mbf{0}} \\
			&&&&&&&&&&&&&
		\end{bNiceArray} \\[.8cm]
		\mbf{A}(3, 1+1, r) &=& 
        \begin{bNiceArray}{@{\hskip .05cm}ccc|ccc|ccc|ccc|rrrr@{\hskip .05cm}}[margin]
            \Block[c]{5-3}{\mbf{I}_{r}} &  &  & \Block[c]{5-3}{\mbf{D}_{r}} &  &   & 1 &  & 1 & 1 &  & 1 & 1 & 1 & 1 & 1 \\
			   &  &  &  &  &   & 1 & \cdots & 1 & 0 & \cdots & 0 & 1 & 0 & 1 & -1 \\
			     &  &  &  &  &    & 0 &  & 0 & 1 &  & 1 & 0 & 1 & -1 & 1 \\
			\cline{7-16}
			   & &  &  &  &  & \Block[c]{2-3}{-\mbf{I}_{r-3}} & & &  \Block[c]{2-3}{-\mbf{I}_{r-3}} &  &  & \Block[r]{2-1}{\mbf{0}} & \Block[r]{2-1}{\mbf{0}} & \Block[r]{2-1}{\mbf{0}} & \Block[r]{2-1}{\mbf{0}} \\
               &&&&&&&&&&&&&&&
		\end{bNiceArray}
\end{array}
\]
\caption{For each $r\ge 3$, the matrices $\mbf{A}(3,r)$, $\mbf{A}(3, 2, r)$, and $\mbf{A}(3, 1+1, r)$ are all rank-$r$, $3$-modular matrices with $\binom{r+1}{2} + 2(r-1)$ pairwise non-parallel columns.}\label{figure: sharp matrices}
\end{figure}

It is straightforward to check that each matrix $\mbf{A}(\Delta, \mbfs{\lambda}, r)$ and $\mbf{A}(\Delta,r)$ has $\binom{r+1}{2} + (\Delta - 1)(r - 1)$ columns and that the columns are pairwise non-parallel.
The fact that $\mbf{A}(\Delta,r)$ is $\Delta$-modular is shown by Lee et al. \cite[Proposition 1]{LPSX2021}
The next proposition demonstrates that each $\mbf{A}(\Delta, \mbfs{\lambda}, r)$ is also $\Delta$-modular.
We will use the fact that columns \ref{in-between:col1} and \ref{in-between:col2} form a submatrix of the incidence matrix of a directed graph.
Hence, these columns form a TU matrix.

\begin{proposition}\label{prop:in-betweenFamilies}
Let $\Delta \in \mbb{Z}_{\ge 2}$, let $\mbfs{\lambda} = (\lambda_i)_{i=1}^m$ be a partition of $\Delta - 1$, and let $r$ be a positive integer with $r \ge m + 1$.
The matrix $\mbf{A}(\Delta, \mbfs{\lambda}, r)$ is $\Delta$-modular.
\end{proposition}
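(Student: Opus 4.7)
The plan is to fix an arbitrary $r\times r$ submatrix $\mathbf{B}$ of $\mathbf{A}(\Delta,\mbfs{\lambda},r)$ and show $|\det\mathbf{B}|\le\Delta$. Two special cases reduce immediately. If $\mathbf{B}$ is singular the inequality is trivial. If $\mathbf{e}_1$ appears as a column of $\mathbf{B}$, then expanding along that column leaves an $(r-1)\times(r-1)$ submatrix of the TU matrix formed by rows $2,\dots,r$ of $\mathbf{A}(\Delta,\mbfs{\lambda},r)$---every row pattern there is $\mathbf{0}$, $\pm\mathbf{e}_v$, or $\pm(\mathbf{e}_u-\mathbf{e}_v)$, i.e.\ a signed node--arc incidence matrix with $0$ playing the role of an auxiliary vertex---giving $|\det\mathbf{B}|\le 1$. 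From now on I assume $\mathbf{B}$ is non-singular and omits $\mathbf{e}_1$.

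Next I would expand along row $1$: $\det\mathbf{B}=\sum_\ell(-1)^{1+\ell}B_{1\ell}\det(\mathbf{B}_{1\ell})$, with each $|\det(\mathbf{B}_{1\ell})|\in\{0,1\}$ by the TU property of the lower block. I would encode the columns of $\mathbf{B}$ as edges of a graph $G$ on vertex set $\{0,1,\dots,r-1\}$ using their lower-part supports, with $0$ acting as a virtual vertex for unit-vector lower parts. Non-singularity of $\mathbf{B}$ forces $G$ to be a connected graph with $r$ vertices and $r$ edges, hence unicyclic with a unique cycle $C$; moreover $\det(\mathbf{B}_{1\ell})\neq 0$ exactly when $\ell\in C$.

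Fixing an orientation of $C$, for $\ell\in C$ define $\epsilon_\ell\in\{\pm 1\}$ to record whether the cycle direction agrees with column $\ell$'s intrinsic orientation $\mathbf{e}_u-\mathbf{e}_v$. The kernel of the reduced signed incidence matrix of a connected unicyclic graph is one-dimensional and spanned by the cycle-flow vector $\epsilon$, so the standard identity expressing the null vector of a full-row-rank $(r-1)\times r$ matrix via signed maximal minors yields $(-1)^{1+\ell}\det(\mathbf{B}_{1\ell})=\sigma\epsilon_\ell$ for a global sign $\sigma\in\{\pm 1\}$. Consequently $\det\mathbf{B}=\sigma\sum_{\ell\in C}\epsilon_\ell B_{1\ell}$.

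It remains to bound $|\sum_{\ell\in C}\epsilon_\ell B_{1\ell}|$ by $\Delta$. I would partition the cycle edges by their \emph{tail}---the vertex carrying the $+1$ in the lower part. Columns of types (A-3) or (A-4) with index $i\in[m]$ have tail $i$ and row-$1$ entry $k_\ell\in[\lambda_i]$; columns $\mathbf{e}_1-\mathbf{e}_j$ from (A-2) have tail $0$ and row-$1$ entry $1$; all remaining columns contribute $B_{1\ell}=0$. At any vertex $u\in V(C)$, the cycle edge entering $u$ has $\epsilon=-1$ precisely when its tail is $u$, and the cycle edge leaving $u$ has $\epsilon=+1$ precisely when its tail is $u$, so tail-$u$ contributions pair up with a sign flip. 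A short case analysis then shows that the contribution from $u=0$ has absolute value at most $1$, the contribution from each $u\in[m]$ has absolute value at most $\lambda_u$, and vertices outside $\{0\}\cup[m]$ contribute $0$. Summing, $|\det\mathbf{B}|\le 1+\sum_{u\in[m]}\lambda_u=\Delta$. The main obstacle is the careful sign bookkeeping in the last two steps---tying together the cycle orientation, each column's intrinsic orientation, and the key structural fact that nonzero row-$1$ entries of (A-3) and (A-4) columns are coupled, via their tail vertex $i\in[m]$, to the allotment $\lambda_i$ of the partition $\mbfs{\lambda}$.
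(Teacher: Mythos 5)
Your proof is correct and follows essentially the same route as the paper: both reduce to the non-singular, $\mathbf{e}_1$-free case, interpret the bottom $r-1$ rows plus an auxiliary (negated-sum) vertex as the incidence matrix of a directed graph, identify the unique cycle forced by the rank count, and bound $|\det\mathbf{B}|$ by summing the row-1 entries of the cycle columns with their cycle-flow signs, partitioned exactly as your ``tail'' sets (the paper's $I_{\neq 0}$ is your tail-$0$ class and its $E_i$ is your tail-$i$ class). The only cosmetic difference is how the cycle formula for $\det\mathbf{B}$ is extracted: the paper replaces a single cycle column by the signed sum of all cycle columns and Laplace-expands, whereas you expand along row 1 and invoke the maximal-minor characterization of the null vector of the full-row-rank bottom block; these are algebraically equivalent.
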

\begin{proof}
Let $\mbf{B}$ be an $r\times r$ submatrix of $\mbf{A}(\Delta,  \mbfs{\lambda}, r)$.
If $\mbf{B}$ is singular, then $\det (\mbf{B}) = 0 \le \Delta$.
Thus, suppose that $\mbf{B}$ is non-singular.

The last $r-1$ rows of $\mbf{A}(\Delta, \mbfs{\lambda}, r)$ form a submatrix of an incidence matrix of a directed graph.
Thus, the last $r-1$ rows of $\mbf{A}(\Delta, \mbfs{\lambda}, r)$ form a TU matrix.
Hence, if $\mbf{B}$ contains the first unit column $\mbf{e}_1$, then expanding along this column shows that $|\det(\mbf{B})| = 1 \le \Delta$.
Therefore, suppose that $\mbf{B}$ does not contain the first unit column.

The $(r-1)\times r$ matrix $\mbf{B}[\{2, \dotsc, r\}, [r]]$ has rank $r-1$.
As $\mbf{B}$ does not contain the first unit column, each column of $\mbf{B}[\{2, \dotsc, r\}, [r]]$ has one or two non-zero entries, which have to be $+1$ or $-1$.
Moreover, if a column of $\mbf{B}[\{2, \dotsc, r\}, [r]]$ has two non-zero entries, then they are $+1$ and $-1$.
Define the $r\times r$ matrix $\overline{\mbf{B}}$ such that its $\ell$-th row is 
\[
\overline{\mbf{B}}[\{\ell\}, [r]] := 
\left\{
\begin{array}{r@{\hskip .5cm}l}
-\sum_{\ell=2}^r \mbf{B}[\{\ell\}, [r]],&\text{if}~\ell=1\\[.1cm]
\mbf{B}[\{\ell\}, [r]], &\text{if}~\ell \ge 2.
\end{array}
\right.
\]

The matrix $\overline{\mbf{B}}$ has exactly one $+1$ and one $-1$ in each column.
Hence, it is the incidence matrix of a directed graph $D$ on vertex set $[r]$.
As $\overline{\mbf{B}}$ has rank $r-1$ (because its rows sum to the zero vector) and $r$ columns, there exists a subset of columns $C \subseteq [r]$ that forms a circuit, i.e., a cycle of the undirected graph $G$ underlying $D$.
Consequently, there exists $\mbf{u} \in \{-1,+1\}^{C}$ such that $\sum_{\ell \in C} u_{\ell} \overline{\mbf{B}}[\ell] = \mbf{0}$.
In particular, $\sum_{\ell \in C} u_{\ell}\mbf{B}[\{2, \dotsc, r\}, \{\ell\}] = \mbf{0}$.

Set
\[
I_{\neq 0} := \{\ell \in C :\ \mbf{B}[\ell] \text{ is of type \ref{in-between:col2} and } \mbf{B}[1,\ell] \neq 0\}.
\]
We claim $|I_{\neq 0}| \le 2$.
By the definition of $\overline{\mbf{B}}$, an index $\ell \in I_{\neq 0}$ would imply that $\mbf{B}[\ell] = \overline{\mbf{B}}[\ell]$.
Hence, each $\ell \in I_{\neq 0}$ corresponds to an arc of $D$ containing node $1$.
As $C$ defines a cycle of $G$, this means that node $1$ is in at most two arcs.
Thus, $|I_{\neq 0}| \le 2$.
Moreover, if $|I_{\neq 0}| = 2$, say $I_{\neq 0} = \{j,k\}$, then $u_j = -u_k$ because $\sum_{\ell \in C} u_{\ell}\overline{\mbf{B}}[\ell] = \mbf{0}$.
Thus, $|I_{\neq 0}| \le 2$ and
\[
\bigg| \sum_{\ell \in I_{\neq 0}} u_{\ell} \mbf{B}[1,\ell]\bigg| = \bigg| \sum_{\ell \in I_{\neq 0}} u_{\ell}\bigg| = 
\left\{
\begin{array}{c@{\hskip .5cm}l}
|I_{\neq 0}| &\text{if}~|I_{\neq 0}| \le 1\\[.1cm]
u_j + u_k = 0&\text{if}~ I_{\neq 0} = \{j,k\}.
\end{array}
\right.
\]
In particular, $| \sum_{\ell \in I_{\neq 0}} u_{\ell} \mbf{B}[1,\ell]|\le 1$.

For $i \in [m]$, set 
\[
E_i := \left\{\ell \in C:\
\begin{array}{l}
\mbf{B}[\ell]  = k\mbf{e}_1 + \mbf{e}_{i+1} \text{ or } \mbf{B}[\ell]  =  k\mbf{e}_1 + \mbf{e}_{i+1} - \mbf{e}_j\\
\text{for some $k \in [\lambda_i]$ and $j \in [r] - \{1, i+1\}$}
\end{array}
\right\}.
\]
Note that if $j \in E_i$, then $\mbf{B}[i+1,j] = 1$.
The rows of $\overline{\mbf{B}}[C]$ have only two non-zero entries (because $C$ defines a cycle in $G$), so $|E_i| \le 2$.
Moreover, if $|E_i| = 2$, say $E_i = \{j,k\}$, then $u_j = -u_k$ because $\sum_{\ell\in C} u_{\ell}\overline{\mbf{B}}[i+1, \ell] = u_j+u_k =0$.
Consequently,
\begin{equation}\label{eq:PathEf2}
\bigg| \sum_{\ell \in E_i} u_{\ell} \mbf{B}[1,\ell]\bigg| \le \lambda_i.
\end{equation}

Note that if $\ell \in C$ satisfies $\mbf{B}[1,\ell] \neq 0$, then either $\ell \in I_{\neq 0}$ or $\ell \in E_i$ for some $i \in [m]$.
Consider an arbitrary index $\ell^* \in C$.
Replace the column $\mbf{B}[[r], \ell^*]$ by $\sum_{\ell\in C}^r u_{\ell}\mbf{B}[[r], \ell] $.
As this replacement is an elementary column operation, we can apply Laplace expansion along the new column $\ell^*$, then apply $| \sum_{\ell \in I_{\neq 0}} u_{\ell} \mbf{B}[1,\ell]|\le 1$ and~\eqref{eq:PathEf2}, to conclude the following:
\begin{align*}
|\det (\mbf{B})| = \bigg|\sum_{\ell\in C} u_{\ell}\mbf{B}[1, \ell]\bigg| 
\le & \bigg| \sum_{\ell \in I_{\neq 0}} u_{\ell} \mbf{B}[1,\ell]\bigg| + \sum_{i=1}^m\bigg| \sum_{\ell \in E_i} u_{\ell} \mbf{B}[1,\ell]\bigg|\\
\le & 1 + \sum_{i=1}^m \lambda_i = 1+(\Delta-1) = \Delta. \qedhere
\end{align*}
\end{proof}

Next, we show that if $\mbfs{\lambda}$ and $\mbfs{\lambda}'$ are different partitions of $\Delta - 1$, then the vector matroids of $\mbf{A}(\Delta, \mbfs{\lambda}, r)$ and $\mbf{A}(\Delta, \mbfs{\lambda}', r)$ are non-isomorphic.
Furthermore, these are not isomorphic to $M[\mbf{A}(\Delta,r)]$.

\begin{proposition} \label{prop: non-isomorphic extremal matroids}
Let $\Delta \in \mbb{Z}_{\ge 2}$ and $r\in \mbb{Z}$ satisfy $r \ge \Delta+1$.
If $\mbfs{\lambda} = (\lambda_i)_{i=1}^m$ and $\mbfs{\lambda}' = (\lambda'_i)_{i=1}^{m'}$ are distinct partitions of $\Delta - 1$, then $M[\mbf{A}(\Delta, \mbfs{\lambda}, r)]$ and $M[\mbf{A}(\Delta, \mbfs{\lambda}', r)]$ are not isomorphic.
Moreover, $M[\mbf{A}(\Delta, \mbfs{\lambda}, r)]$ is not isomorphic to $M[\mbf{A}(\Delta, r)]$.
\end{proposition}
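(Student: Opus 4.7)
The plan is to distinguish the matroids via the sequence $(\ell_s(M))_{s\ge 4}$, where $\ell_s(M)$ counts the lines of $M$ containing exactly $s$ points. This is a matroid isomorphism invariant, so producing different sequences gives non-isomorphism. I would compute this sequence explicitly for each matroid in question.

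The key technical step is to classify all lines with at least four points in $M[\mbf{A}(\Delta, \mbfs{\lambda}, r)]$. Since every line of the spanning clique $M(K_{r+1})$ has exactly three points, any $\ge 4$-point line must contain a column of type \emph{(A-3)} or \emph{(A-4)}. A case analysis on the pair of columns spanning such a line, leveraging that every column of types \emph{(A-3)} and \emph{(A-4)} has strictly positive $\mbf{e}_1$-coordinate, shows that each such line passes through $\mbf{e}_1$ and is one of:
\begin{itemize}[leftmargin=*, noitemsep]
    \item[\emph{(a)}] $\Span\{\mbf{e}_1, \mbf{e}_{i+1}\}$ for some $i \in [m]$, of size $3 + \lambda_i$;
    \item[\emph{(b)}] $\Span\{\mbf{e}_1, \mbf{e}_{i+1} - \mbf{e}_{i'+1}\}$ for distinct $i, i' \in [m]$, of size $2 + \lambda_i + \lambda_{i'}$;
    \item[\emph{(c)}] $\Span\{\mbf{e}_1, \mbf{e}_{i+1} - \mbf{e}_j\}$ for $i \in [m]$ with $\lambda_i \ge 2$ and $j \in \{m+2, \ldots, r\}$, of size $2 + \lambda_i$.
\end{itemize}

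Writing $p_k = p_k(\mbfs{\lambda})$ for the number of parts of $\mbfs{\lambda}$ equal to $k$ and $m = \sum_k p_k$, the classification yields
\[\ell_s\bigl(M[\mbf{A}(\Delta, \mbfs{\lambda}, r)]\bigr) = p_{s-3} + (r - m - 1)\, p_{s-2} + \#\{1 \le i < i' \le m : \lambda_i + \lambda_{i'} = s - 2\}\]
for every $s \ge 4$. Each $\ell_s$ is affine in $r$ with slope $p_{s-2}$ and an $r$-independent remainder bounded in terms of $\Delta$. From this, the data $(r, (\ell_s)_{s \ge 4})$ determines the multiset $\{p_k : k \ge 2\}$ (by a descending recursion starting at the largest $s$ with $\ell_s > 0$), and then $p_1$ is recovered from the constraint $\sum_k k\, p_k = \Delta - 1$. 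The bound $r \ge \Delta + 1$ ensures the linear-in-$r$ term cannot be canceled by the bounded residuals. Hence distinct partitions $\mbfs{\lambda} \ne \mbfs{\lambda}'$ of $\Delta - 1$ produce distinct invariants and non-isomorphic matroids.

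An analogous and easier classification shows that the only $\ge 4$-point lines of $M[\mbf{A}(\Delta, r)]$ are the $r - 1$ lines $\Span\{\mbf{e}_1, \mbf{e}_i\}$ for $i \in \{2, \ldots, r\}$, each of size $\Delta + 2$. If $M[\mbf{A}(\Delta, r)]$ were isomorphic to $M[\mbf{A}(\Delta, \mbfs{\lambda}, r)]$, the $r$-coefficient of $\ell_{\Delta+2}$ would equal $1$; but for $\mbfs{\lambda}$ this coefficient is $p_\Delta(\mbfs{\lambda}) = 0$ (as $\mbfs{\lambda}$ partitions $\Delta - 1$), so $\ell_{\Delta+2}(M[\mbf{A}(\Delta,\mbfs{\lambda},r)]) = p_{\Delta-1} + Q_{\Delta+2}$ is bounded by $(\Delta+1)/2 < r-1$ for $r \ge \Delta+1$. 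This contradicts $\ell_{\Delta+2}(M[\mbf{A}(\Delta, r)]) = r - 1$. The main obstacle is the line classification: the case analysis is finite but delicate, particularly when ruling out $2$-planes spanned by two columns of types \emph{(A-3)}/\emph{(A-4)} with different indices, where the key leverage is the positivity and boundedness of their $\mbf{e}_1$-coordinates.
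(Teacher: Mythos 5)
Your approach parallels the paper's but diverges in two places that matter. Both proofs work by classifying the interesting rank-$2$ flats and then arguing that the multiset of their sizes pins down $\mbfs{\lambda}$. Where the paper uses a slightly indirect invariant -- the multiset of sizes of long lines through the unique element $e$ with $M/e$ binary -- you use the more transparent invariant of counting $\ge 4$-point lines directly, which avoids needing uniqueness of the contraction-binary element. Your classification of $\ge 4$-point lines is correct; however, the sentence about ``positivity of the $\mbf{e}_1$-coordinate'' is not really the mechanism. The right argument is that $M/\mbf{e}_1$ is binary (its simplification is graphic), so a $\ge 4$-point line whose closure misses $\mbf{e}_1$ would give a $U_{2,4}$-restriction of $M/\mbf{e}_1$, a contradiction. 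You should make that explicit rather than gesture at a case analysis on pairs of columns.

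The reconstruction step is where the real gap lies. Your formula
\[
\ell_s = p_{s-3} + (r - m - 1)\,p_{s-2} + \#\{i<i' : \lambda_i+\lambda_{i'}=s-2\}
\]
involves $m=\sum_k p_k$, and $m$ is not determined before you start the recursion. Your plan to recover $\{p_k : k\ge 2\}$ by a ``descending recursion'' and then fill in $p_1$ from $\sum k p_k = \Delta-1$ is circular as stated: every $\ell_s$ depends on $m$, hence on $p_1$. The paper sidesteps this by also counting the $3$-point lines through $x_1$ (which you discard), obtaining $|\nu| = (r-m-1)+m+(r-m-1)m+\binom{m}{2}$, a function of $m$ and $r$ alone; injectivity of this quadratic on $[1,r-2]$ recovers $m$, after which a clean ascending recursion on $\ell_3, \ell_4, \dotsc$ recovers $n_1, n_2, \dotsc$. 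Your descending scheme may ultimately be salvageable (e.g.\ noting that $p_{s^*-2}=0$ at the largest $s^*$ with $\ell_{s^*}>0$, so $m$ drops out at the top and may be recoverable on the way down), but as written it is not a proof; you need to exhibit a concrete procedure that recovers $m$ and each $p_k$, or switch to the paper's device of also using the $3$-point lines. The final comparison with $M[\mbf{A}(\Delta,r)]$ is fine -- in fact $\ell_{\Delta+2}(M[\mbf{A}(\Delta,\mbfs{\lambda},r)]) \le 1$ since $p_\Delta = 0$ and no two parts of a partition of $\Delta-1$ sum to $\Delta$, which is sharper than your bound of $(\Delta+1)/2$.
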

\begin{proof}
%
%Set $M := M[\mbf{A}(\Delta, \mbfs{\lambda}, r)]$ and let $M_0 := M[\mbf{A}(\Delta, r)]$.
%
For $i \in [r]$, let $x_i$ be the element of $M[\mbf{A}(\Delta, \mbfs{\lambda}, r)]$ indexing the column $\mbf{e}_i$ of $\mbf{A}(\Delta, \mbfs{\lambda}, r)$.
For $1 \le i < j \le r$ let $x_{i,j}$ be the element of $M[\mbf{A}(\Delta, \mbfs{\lambda}, r)]$ indexing the column $\mbf{e}_i - \mbf{e}_j$ of $\mbf{A}(\Delta, \mbfs{\lambda}, r)$.

Consider the set
\[
\mcf{L}(\Delta, \mbfs{\lambda}, r) := 
\left\{L:\ \begin{array}{l}
\text{$L$ is a long line of $M[\mbf{A}(\Delta, \mbfs{\lambda}, r)]$,}\\
\text{and $M[\mbf{A}(\Delta, \mbfs{\lambda}, r)]/e$ is binary for some $e \in L$}
\end{array}
\right\},
\]
and the multiset of cardinalities of lines in $\mcf{L}(\Delta, \mbfs{\lambda}, r)$:
\[
\nu(\Delta, \mbfs{\lambda}, r) :=  \{|L| :\ L \in \mcf{L}(\Delta, \mbfs{\lambda}, r)\}.
\]

The multiset $\nu(\Delta, \mbfs{\lambda}, r)$ is invariant under isomorphisms.
Furthermore, the contraction $M[\mbf{A}(\Delta, \mbfs{\lambda}, r)]/x_1$ is binary, and $\cl(\{x_1, x_2\})$ and $\cl(\{x_1, x_{2,3}\})$ are both lines of $M[\mbf{A}(\Delta, \mbfs{\lambda}, r)]$ through $x_1$ with at least four points.
Thus, $x_1$ is the only element whose contraction from $M[\mbf{A}(\Delta, \mbfs{\lambda}, r)]$ is binary.
Hence,
\[
\mcf{L}(\Delta, \mbfs{\lambda}, r)= \{L:\ \text{$L$ is a long line of $M[\mbf{A}(\Delta, \mbfs{\lambda}, r)]$ through $x_1$}\}.
\]

Our proof proceeds as follows.
First, we use $\mbfs{\lambda}$ to compute $\nu(\Delta, \mbfs{\lambda}, r)$.
Second, we create an injective inverse mapping from the triple $(\nu(\Delta, \mbfs{\lambda}, r),\Delta,r)$ to a partition of $\Delta - 1$, which will be $\mbfs{\lambda}$.
As our inverse mapping is injective, our second step implies that $\nu(\Delta, \mbfs{\lambda}, r) \neq \nu(\Delta, \mbfs{\lambda}', r)$ because $\mbfs{\lambda} \neq \mbfs{\lambda}'$.
Given that $\nu(\Delta, \mbfs{\lambda}, r)$ is invariant under isomorphism, this will imply that $M[\mbf{A}(\Delta, \mbfs{\lambda}, r)]$ is not isomorphic to $M[\mbf{A}(\Delta, \mbfs{\lambda}', r)]$.

First, we compute $\nu(\Delta, \mbfs{\lambda}, r)$ from $\mbfs{\lambda}$.
For each $L \in \mcf{L}(\Delta, \mbfs{\lambda}, r)$, there exists an element $e \in L - \{x_1\}$ of type \ref{in-between:col1} or \ref{in-between:col2}, i.e., an element $x_i$ or $x_{i,j}$.
Indeed, if $f \in L - \{x_1\}$ is type \ref{in-between:col3} or \ref{in-between:col4}, then $\cl(x_1, f) - \{x_1\}$ contains an element of type \ref{in-between:col1} or \ref{in-between:col2}.
Moreover, if $m + 2 \le i < j \le r$, then $|\cl(x_1, x_{i,j})| = 2$.
Hence, the lines $L \in \mcf{L}(\Delta, \mbfs{\lambda}, r)$ fall into four distinct possibilities:
\begin{enumerate}[label = {(\alph*)},leftmargin = *,noitemsep]
    \item\label{list:lines1} $L = \cl(\{x_1, x_i\})$ with $m + 2 \le i \le r$.
    
    \item\label{list:lines2} $L = \cl(\{x_1, x_i\})$ with $2 \le i \le m + 1$.

    \item\label{list:lines3} $L = \cl(\{x_1, x_{i,j}\})$ with $2 \le i \le m + 1$ and $m + 2 \le j \le r$.

    \item\label{list:lines4} $L = \cl(\{x_1, x_{i,j}\})$ with $2 \le i < j \le m + 1$.
\end{enumerate}

Given a set $S$ and $t \in \mbb{Z}_{\ge 1}$ we write $tS$ for the multiset consisting of $t$ copies of each element of $S$.
There are $r - (m+1)$ long lines in case \ref{list:lines1}, each with length $3$.
Thus, case \ref{list:lines1} contributes $(r - (m+1))\{3\}$ to $\nu(\Delta, \mbfs{\lambda}, r)$.
A line in case \ref{list:lines2} consists of labels of columns $\mbf{e}_1, \mbf{e}_i, \mbf{e}_1 - \mbf{e}_i$ and $k\mbf{e}_1 + \mbf{e}_i$ for all $k \in [\lambda_{i-1}]$.
Thus, case \ref{list:lines2} contributes $\{3 + \lambda_i \colon i \in [m]\}$ to $\nu(\Delta, \mbfs{\lambda}, r)$.
A line in case \ref{list:lines3} consists of labels of columns $\mbf{e}_1, \mbf{e}_i - \mbf{e}_j$ and $k\mbf{e}_1 + \mbf{e}_i - \mbf{e}_j$ for all $k \in [\lambda_{i-1}]$.
Thus, case \ref{list:lines3} contributes $(r - (m + 1))\{2 + \lambda_i \colon i \in [m]\}$ to $\nu(\Delta, \mbfs{\lambda}, r)$
A line in case \ref{list:lines4} consists of labels of columns $\mbf{e}_1, \mbf{e}_i - \mbf{e}_j$, $k\mbf{e}_1 + \mbf{e}_i - \mbf{e}_j$ for all $k \in [\lambda_{i-1}]$, and $t\mbf{e}_1 - \mbf{e}_i + \mbf{e}_j$ for all $t \in [\lambda_{j-1}]$.
Thus, case \ref{list:lines4} contributes $\{2 + \lambda_i + \lambda_j \colon 1 \le i < j \le m\}$ to $\nu(\Delta, \mbfs{\lambda}, r)$.
Putting this all together, we have
\begin{equation}\label{eqLL}
\begin{array}{r@{\hskip .1 cm}c@{\hskip .1 cm}l}
\nu(\Delta, \mbfs{\lambda}, r) = (r - (m+1))\{3\} &\cup &\{3 + \lambda_i \colon i \in [m]\} \\
&\cup& (r - (m + 1))\{2 + \lambda_i \colon i \in [m]\} \\
&\cup &\{2 + \lambda_i + \lambda_j \colon 1 \le i < j \le m\}.
\end{array}
\end{equation}

It may not be clear that the right-hand side of \eqref{eqLL} is different for $\mbfs{\lambda}$ and $\mbfs{\lambda}'$.
Our next step is to show that this is the case. 
In particular, we show how $\Delta$, $r$, and the multiset on the right-hand side of \eqref{eqLL} can be used to reconstruct $\mbfs{\lambda}$.

To this end, we first determine $m$ from $\nu(\Delta, \mbfs{\lambda}, r)$.
Consider the function $f(x) := (r - (x+1)) + x + (r - (x+1))x + \binom{x}{2}$.
This is a concave quadratic function that is maximized at $x = r - \frac{3}{2}$.
Therefore, $f$ is injective on the interval $[1,r-2]$.
Now, suppose we are given $r$ and $\nu(\Delta, \mbfs{\lambda}, r)$.
We assumed $r \ge \Delta+1$ in the statement of \Cref{prop: non-isomorphic extremal matroids}, and any partition of $\Delta-1$ has at most $\Delta-1 \le r-2$ parts.
Hence, from \eqref{eqLL}, there is a unique $m \in [r-2]$ such that $f(m) = |\nu(\Delta, \mbfs{\lambda}, r)|$.
In other words, we can determine $m$ from $\nu(\Delta, \mbfs{\lambda}, r)$.

Now we determine $\mbfs{\lambda}$ from $\nu(\Delta, \mbfs{\lambda}, r)$.
For each $j \in [\Delta - 1]$ define $n_j:= |\{i \in [m] \colon \lambda_i = j\}|$.
Clearly the values $n_j$ for all $j \in [\Delta - 1]$ determine $\lambda$.
For each $t \in \mbb{Z}_{\ge 1}$, let $\ell_t$ be the number of times $t$ appears in $\nu(\Delta, \mbfs{\lambda}, r)$.
First, note that 
\[
\ell_3 = (r - (m+1)) + (r - (m+1))\underbrace{|\{i \in [m] \colon \lambda_i = 1\}|}_{ =:n_1}.
\]
We can solve this equation to recover $n_1$ from $\nu(\Delta, \mbfs{\lambda}, r)$.
Now, fix some $s$ with $2 \le s \le \Delta - 1$ and suppose that can recover $n_j$ from $\nu(\Delta, \mbfs{\lambda}, r)$ for all $j < s$.
We will use this information to recover $n_{s}$.
Let $z$ be the number of ways to write $s$ as a sum of positive integers such that each is at most $s - 1$ and we use at most $n_j$ copies of $j$, for all $j \in [s-1]$.
As $s + 2 \ge 4$, we have
\[
\ell_{s + 2} = n_{s - 1} + (r - (m + 1))n_s + z.
\]
We can solve this equation for $n_s$.
Therefore, by induction, we can determine $n_s$ for all $s \in [\Delta-1]$.
The values $n_s$, for all $s \in [\Delta-1]$, uniquely determine $\mbfs{\lambda}$.
Hence, we can recover $\mbfs{\lambda}$ from $\nu(\Delta, \mbfs{\lambda}, r)$.

Finally, we show that $M$ and $M_0$ are non-isomorphic.
We previously argued that there is a unique element $e$ such that $M[\mbf{A}(\Delta, \mbfs{\lambda}, r)]/e$ is binary.
A similar argument shows that $M_0$ has a unique element $e$ such that $M_0/e$ is binary ($e$ indexes $\mbf{e}_1$), and the multiset of lengths of long lines of $M_0$ through $e$ is $(r - 1)\{\Delta + 2\}$.
However, $\mcf{L}(\Delta, \lambda, r)$ contains either one copy (if $m = 1$) or zero copies (if $m \ge 2$) of $\Delta + 2$.
Thus, $M$ and $M_0$ are not isomorphic.
\end{proof}

\medskip

\noindent{\bf Funding.} J. Paat was supported by a Natural Sciences and Engineering Research Council of Canada Discovery Grant [RGPIN-2021-02475]. Z. Walsh was supported by NSF grant DMS-2452015.

\bibliographystyle{abbrv}
\bibliography{references.bib}

\end{document}